\date{\today}
\def\1{{\bf 1}}
\def\div{{\rm div}}
\def\ett{{\bf 1}}
\def\GZ{{\mathcal{GZ}}}
\def\Zy{{\mathcal Z}}
\def\deg{\text{deg}\,}
\def\w{\wedge}
\def\dbar{\bar\partial}
\def\C{{\mathbb C}}
\def\w{{\wedge}}
\def\P{{\mathbb P}}
\def\A{{\mathcal A}}
\def\B{{\mathcal B}}
\def\codim{{\rm codim\,}}
\def\Z{{\mathbb Z}}
\def\L{{\mathcal L}}
\def\Re{{\rm Re\,  }}
\def\L{{\mathcal L}}
\def\U{{\mathcal U}}
\def\J{{\mathcal J}}
\def\nbh{neighborhood }
\def\be{\begin{equation}}
\def\ee{\end{equation}}
\def\Ok{\mathcal O}
\def\mult{{\rm mult}}
\def\b{{\bullet}}
\def\Pr{{\mathbb P}}
\def\p{{\mathfrak p}}
\def\Pk{{\mathbb P}}
\def\s{{\circ}}
\def\fff{{\sigma}}
\newtheorem{thm}{Theorem}[section]
\newtheorem{lma}[thm]{Lemma}
\newtheorem{prop}[thm]{Proposition}
\theoremstyle{definition}
\newtheorem{df}[thm]{Definition}
\theoremstyle{remark}
\newtheorem{preremark}[thm]{Remark}
\newtheorem{preex}[thm]{Example}
\newenvironment{remark}{\begin{preremark}}{\qed\end{preremark}}
\newenvironment{ex}{\begin{preex}}{\qed\end{preex}}
\numberwithin{equation}{section}
\title[Nonproper intersection products and generalized cycles]{Nonproper intersection products and generalized cycles}
\begin{document}

\date{\today}

\author[Andersson \& Eriksson \& Samuelsson Kalm \&
Wulcan \&  Yger
]{Mats Andersson \& Dennis Eriksson \& H\aa kan Samuelsson Kalm \&
Elizabeth Wulcan \&  Alain Yger}

\address{Department of Mathematical Sciences\\Chalmers University of Technology and University of
Gothenburg\\SE-412 96 G\"OTEBORG\\SWEDEN}

\email{matsa@chalmers.se, dener@chalmers.se, hasam@chalmers.se, wulcan@chalmers.se}

\address{Institut de Math\'ematique \\
Université Bordeaux 1 \\
33405, Talence \\
France}

\email{Alain.Yger@math.u-bordeaux.fr}

\subjclass{}

\thanks{The first, third and fourth  author  were
  partially supported by the Swedish
  Research Council}

\begin{abstract}
In this article we develop intersection theory in terms of the $\B$-group 
of a reduced analytic space. This group was introduced in a previous work as an analogue of the Chow group; it is 
generated by currents that are direct images 
of Chern forms and it contains all usual cycles. However,
contrary to Chow classes, the $\B$-classes have well-defined multiplicities at each point. 

We focus on a $\B$-analogue of the intersection theory based on the 
St\"uckrad-Vogel procedure and the join construction in projective space. Our 
approach provides global $\B$-classes which satisfy a B{\'e}zout theorem and have the expected local intersection 
numbers.
An essential feature 
is that we take averages, over various auxiliary choices, by integration.
We also introduce $\B$-analogues of more classical constructions of intersections 
using the Gysin map of the diagonal. These constructions are connected via a $\B$-variant of van~Gastel's formulas.
Furthermore, we prove that our intersections coincide with the classical ones on cohomology level.

\end{abstract}


\maketitle

\section{Introduction}

Let $Y$ be a smooth manifold of dimension $n$.
Assume that $\mu_1,\ldots,\mu_r$ are cycles on $Y$ of
pure codimensions $\kappa_1,\ldots,\kappa_r$, respectively, let $\kappa:=\kappa_1+\cdots +\kappa_r$,
and let  $V$ be the  set-theoretic intersection $V=|\mu_1|\cap\cdots\cap|\mu_r|$.
If $\mu_j$ intersect properly, that is, if $\codim V=\kappa$, 
then there is a well-defined intersection cycle
\begin{equation*}
\mu_1\cdot_Y  \cdots \, \cdot_Y\mu_r=\sum m_j V_j,
\end{equation*}
where $V_j$ are the irreducible components of $V$ and $m_j$ are integers.
In the nonproper case there is no canonical intersection cycle. However, following Fulton-MacPherson, see \cite{Fult},
there is an intersection product  $\mu_1\cdot_Y\cdots \, \cdot_Y\mu_r$, which is an
element in the Chow group $\A_{n-\kappa}(V)$;   that is, the product is represented by a cycle on
$V$ of dimension $n-\kappa$
that is determined up to rational equivalence.  For instance, the self-intersection of a line $L$ in $\P^n$ is
obtained by intersecting $L$ with  a perturbation of $L$. If $n=2$ one gets 
an arbitrary point on $L$, whereas if $n\geq 3$ the intersection is empty.

In case $Y=\Pk^n$ there is an intersection product due to St\"uckrad and Vogel, \cite{SV,V},
that in general consists of components of various dimensions. 
For instance the self-intersection of a line is actually the line itself independently of $n$. However,
in general a nonproper intersection has so-called moving components,
that are only determined up to rational equivalence.   There is a
relation to the classical (Fulton-MacPherson) intersection product via
van~Gastel's formulas, \cite{gast},  see also \cite{FOV}.

Tworzewski, \cite{Twor}, introduced, for $x\in V$,
{\it local intersection numbers}
 \begin{equation}\label{locsec}
\epsilon_\ell(\mu_1,\ldots,\mu_r, x), \quad \ell=0,\ldots, \dim V, 
\end{equation}
see also \cite{GG, AM, AR} and Section~\ref{segretal} below. 
In the proper case $\epsilon_\ell(\mu_1,\ldots,\mu_r, x)$ is precisely the 
multiplicity at $x$ of the proper intersection $\mu_1\cdot_Y \cdots \cdot_Y \mu_r$ for
$\ell=\dim V$ and $0$ otherwise. In the nonproper case the intersection numbers may be nonzero 
also for $\ell<\dim V$.  In general no representative of the classical intersection product,
cf.~\cite[Remark~1.4]{aswy}, or representative of the St\"uckrad-Vogel product, 
can represent these numbers at all points.

The main objective of this paper is to introduce 
a product of cycles in $\P^n$ that at each point carries the local intersection
numbers and at the same time have reasonable global properties, such as 
respecting the B{\'e}zout formula.  To this end we must extend the class of cycles,
and our construction is based on the $\Z$-module $\GZ(X)$ 
of  {\it generalized cycles}  on a (reduced) analytic space $X$ introduced in \cite{aeswy1}.
It is the smallest class of currents on  analytic spaces that is closed under
multiplication by components of Chern forms and under direct images under proper holomorphic
mappings. It turns out that generalized cycles inherit a lot of geometric properties and
preferably can be thought of as geometric objects. Actually we are primarily interested in
a  certain natural quotient group $\B(X)$ of $\GZ(X)$.  Each $\mu$ in $\GZ(X)$ has  a well-defined
Zariski support $|\mu|\subset X$ that only depends on its class in $\B(X)$.
For a subvariety $V\hookrightarrow X$ there is a natural identification
of $\B(V)$ with elements in $\B(X)$ that have Zariski support on $V$.
The group of cycles $\Zy(X)$ is naturally embedded in  $\B(X)$.
Given $\mu\in\B(X)$ also its restriction $\1_V\mu$ to  the subvariety $V$ is an element in $\B(X)$.
Each element
in $\GZ(X)$, and in $\B(X)$, has a unique decomposition into sums of irreducible components. 
Each irreducible element  has in turn a unique decomposition
into components of various dimensions. We let $\B_k(X)$ denote the elements in $\B(X)$ of pure dimension $k$.
We also introduce a notion of {\it effective} generalized cycle $\mu$ in $\GZ(X)$, and class in $\B(X)$,
generalizing the notion of effective cycle.  
Each  $\mu$ in $\GZ(X)$, and in $\B(X)$, 
has a well-defined multiplicity, $\text{mult}_x\,\mu$, at each point $x\in X$, that is an integer and nonnegative if $\mu$ is effective.
Moreover, for each $\mu$ in $\GZ_k(X)$, or in $\B_k(X)$, there is a unique
decomposition
\begin{equation}\label{deco}
\mu=\mu_{fix}+\mu_{mov},
\end{equation}
where $\mu_{fix}$ is an ordinary cycle of dimension $k$, whose irreducible components are called
the {\it fixed} components of $\mu$,
and $\mu_{mov}$, whose irreducible components, the {\it moving} components,
have Zariski support on varieties of dimension strictly larger than $k$.

\smallskip
Each $\mu$ in $\GZ_k(\Pk^n)$, or in $\B_k(\Pk^n)$, has the {\it degree}
\begin{equation}\label{degdef}
\deg\mu:=\int_{\Pk^n}\omega^k\w \mu,
\end{equation}
where $\omega$ is the first Chern class of $\Ok(1)\to\Pk^N$, for instance represented by the Fubini-Study
metric form. If $\mu=\mu_0+\mu_1+\cdots $, where $\mu_k$ has pure
dimension $k$, then 
$$
\deg \mu:=\deg\mu_0+\deg\mu_1+\cdots.
$$ 
For each point $x\in \Pk^n$ and $\mu_1,\ldots,\mu_r\in\B(\U)$  for some open subset $\U\subset\Pk^n$ 
there are $\Z$-valued functions $\epsilon_\ell(\mu_1,\ldots,\mu_r,x)$, $\ell=0,1,\ldots$,  that are
$\Z$-multilinear in $\mu_j$, only depend on the germs of $\mu_j$ at $x$, and
which coincide with  the local intersection numbers \eqref{locsec} if $\mu_j$ are cycles. We say that 
$\epsilon_\ell(\mu_1,\ldots,\mu_r,x)$ are the {\it local intersection numbers} of $\mu_1,\ldots,\mu_r$ at $x$. 
If $\mu_j$ are effective, then these numbers are nonnegative.

\smallskip
Our main result concerns a $\Z$-multilinear mapping 
\begin{equation}\label{poker}
\B(\Pr^n)\times \cdots \times \B(\Pr^n)\to \B(\Pr^n), \quad  \mu_1,\ldots, \mu_r\mapsto \mu_1\bullet\cdots\b\mu_r.
\end{equation}
We  say that the image is the $\bullet$-product of $\mu_1,\ldots,\mu_r$. 
It is obtained, roughly speaking, in the following way:
We first choose representatives for the $\B$-classes $\mu_1, \ldots, \mu_r$,  then form a 
St\"uckrad-Vogel-type product of them. Even for cycles, this product depends on several choices.
Taking a suitable mean value, we get a generalized cycle that turns out to define
an element in $\B(\P^n)$ that is independent of all choices.  If $\mu_j$ are cycles, then
the fixed components in the St\"uckrad-Vogel product appear as fixed components of
$\mu_1\bullet\cdots \bullet\mu_r$.
The formal definition,  Definition~\ref{bulletdef},  is expressed in terms of a certain Monge-Amp\`ere type product,
that can be obtained as a limit of quite explicit expressions, see  Section~\ref{bulletsection}.  
Here is our main result.

\begin{thm}\label{mainthm}
The $\Z$-multilinear mapping \eqref{poker} has the following properties. To begin with, 
$|\mu_1\bullet\cdots\b\mu_r|\subset\cap_{j=1}^r|\mu_j|$, $\mu_1\bullet\cdots\b\mu_r$ is commutative, and 
\begin{equation}\label{stenstod}
\mult_x(\mu_1\bullet\cdots\b\mu_r)_\ell=\epsilon_\ell(\mu_1,\ldots,\mu_r, x), \quad x\in\Pk^n,\  \ell=0,1,\ldots,
\end{equation}
where $(\ \  )_\ell$ denotes the component of dimension $\ell$.
If $\mu_j$ have pure dimensions and 
\begin{equation}\label{vill}
\rho:=\sum_1^r\dim \mu_j -(r-1)n\ge 0, 
\end{equation}
then
\begin{equation}\label{likhet}
\deg(  \mu_1\bullet\cdots\b\mu_r)=\Pi_1^r\deg\mu_j.
\end{equation}
If  $\mu_j$ are effective, then $\mu_1\bullet\cdots\b\mu_r$  is effective and 
\begin{equation}\label{olikhet}
\deg(  \mu_1\bullet\cdots\b\mu_r)\le\Pi_1^r\deg\mu_j.
\end{equation}
%
%
%
If $\mu_1, \ldots, \mu_r$ are cycles that intersect
  properly, then 
\begin{equation}\label{studsmatta}
\mu_1\b\cdots\b\mu_r = \mu_1\cdot_{\P^n}\cdots\cdot_{\P^n} \mu_r. 
\end{equation}
\end{thm}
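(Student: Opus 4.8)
The plan is to prove each of the four properties in Theorem~\ref{mainthm} by reducing to the defining construction of the $\bullet$-product as a mean value of St\"uckrad--Vogel-type products, and exploiting its compatibility with the local intersection numbers $\epsilon_\ell$ together with the properties of $\B$-classes established earlier. First I would treat the support inclusion $|\mu_1\bullet\cdots\b\mu_r|\subset\cap_j|\mu_j|$ and commutativity: these are the most elementary, since the St\"uckrad--Vogel product of representatives is supported on the common set-theoretic intersection (each factor is intersected successively with the others), and the Zariski support of a $\B$-class is preserved under the averaging; commutativity should follow because the mean-value construction is symmetric in the factors, the only asymmetry being in the auxiliary choices (the order of slicing by generic hyperplanes), which is precisely what the integration over those choices washes out.

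\smallskip

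The heart of the matter is \eqref{stenstod}, the identity $\mult_x(\mu_1\bullet\cdots\b\mu_r)_\ell=\epsilon_\ell(\mu_1,\ldots,\mu_r,x)$. Here I would use that both sides are $\Z$-multilinear in the $\mu_j$ and depend only on the germs at $x$, so that it suffices to verify the equality on a generating set of $\B$-classes and, ultimately, to compute the multiplicity of the dimension-$\ell$ component of the averaged St\"uckrad--Vogel product at $x$. The key input is the explicit Monge--Amp\`ere-type description of the $\bullet$-product from Section~\ref{bulletsection} together with the definition of the numbers $\epsilon_\ell$ via the Segre-type / Tworzewski construction recalled in Section~\ref{segretal}; the plan is to show that the $\ell$-dimensional part of the limiting current, evaluated as a multiplicity at $x$, reproduces exactly the $\ell$-th local intersection number. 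I expect this to be \emph{the main obstacle}: matching the combinatorics of the averaged slicing against Tworzewski's definition requires a careful analysis of the limit currents and probably leans on a $\B$-variant of van~Gastel's formulas alluded to in the abstract, which relates the St\"uckrad--Vogel components of various dimensions to the local intersection data.

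\smallskip

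For the degree statements \eqref{likhet} and \eqref{olikhet} I would invoke the B\'ezout-type behaviour of the construction. Under the dimension hypothesis \eqref{vill} with $\mu_j$ of pure dimension, the St\"uckrad--Vogel product has expected total degree $\Pi_j\deg\mu_j$, and because the degree $\deg\mu=\int_{\Pk^n}\omega^k\w\mu$ is an integral that is unchanged by the averaging over auxiliary choices (each individual product already satisfies the B\'ezout identity in $\Pk^n$), the mean value inherits the same degree; thus \eqref{likhet} follows by integrating the pointwise B\'ezout relation and interchanging the mean value with $\int\omega^k\w(\cdot)$. For \eqref{olikhet}, effectivity of $\mu_1\bullet\cdots\b\mu_r$ follows from the stated fact that effective generalized cycles are closed under the operations building the $\bullet$-product (the local intersection numbers of effective cycles are nonnegative), so every component contributes nonnegatively to the degree; the inequality is then the assertion that when \eqref{vill} fails or the intersection is nonproper, some of the expected degree is lost to lower-dimensional (fixed) components, giving $\deg\le\Pi_j\deg\mu_j$.

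\smallskip

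Finally, for \eqref{studsmatta} in the proper case I would argue that when the cycles $\mu_j$ intersect properly the St\"uckrad--Vogel product degenerates: it has no moving components and consists solely of the proper intersection cycle $\mu_1\cdot_{\P^n}\cdots\cdot_{\P^n}\mu_r$ supported in the expected codimension, so the averaging is trivial and returns that cycle. Concretely, by \eqref{stenstod} the only nonvanishing local intersection numbers are the top ones $\epsilon_{\dim V}$, which by the discussion following \eqref{locsec} equal the classical multiplicities of the proper intersection; combined with the decomposition \eqref{deco} this forces $(\mu_1\bullet\cdots\b\mu_r)_\ell=0$ for $\ell<\dim V$ and identifies the top component with $\mu_1\cdot_{\P^n}\cdots\cdot_{\P^n}\mu_r$. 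The remaining point is to check there are no spurious fixed components of dimension equal to $\dim V$ beyond those of the proper intersection, which again follows from the multiplicity identity \eqref{stenstod} since multiplicities determine an effective cycle of pure dimension.
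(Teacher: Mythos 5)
Your proposal does not close the two places where the real work of this theorem lies, and one of its steps is actually false. First, \eqref{stenstod}: you correctly recognize that matching the join/St\"uckrad--Vogel construction against Tworzewski's diagonal-based definition of $\epsilon_\ell$ is ``the main obstacle,'' and you guess it ``probably leans on'' a van~Gastel-type formula, but you never carry this out -- and this comparison \emph{is} the proof. In the paper it is done globally: Proposition~\ref{baka2} identifies the map $i^{!!}$ built from the diagonal $\Delta\subset\P^n\times\P^n$ (which computes the classes $S(\J_\Delta,\mu_1\times\mu_2)$ defining $\epsilon_\ell$ via \eqref{blomma}) with the map $j^\flat$ built from the join diagonal $\Delta_J\subset\P^{2n+1}$ (which computes the $\b$-product via \eqref{hennes}); the proof goes through the blow-up diagram \eqref{svangd} and Lemma~\ref{lupin}, and the multiplicity identity then follows because wedging with powers of $\omega$ and with Chern-form factors does not change multiplicities (Example~\ref{predikat}). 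Alternatively one can affinize and quote \cite{aswy}. Without some version of this argument, \eqref{stenstod} -- and everything you derive from it -- is unestablished.

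Second, your proof of \eqref{studsmatta} uses the inference: $\mult_x(\mu_1\b\cdots\b\mu_r)_\ell=0$ for all $x$ and $\ell<\dim V$, hence $(\mu_1\b\cdots\b\mu_r)_\ell=0$. This is false for generalized cycles, even effective ones: in Example~\ref{hatt} the component $(\gamma\b\gamma)_0$ for a smooth curve of degree $d\geq 2$ is a nonzero effective class of degree $d^2-d$ whose multiplicity vanishes at \emph{every} point, so vanishing multiplicities do not determine vanishing of a class. The correct argument is structural, not multiplicity-based: components of dimension $>\rho$ die by the dimension principle since the Zariski support lies in $V$ with $\dim V=\rho$; components of dimension $<\rho$ die because $\J_J$ is cut out by the $m=(r-1)(n+1)$ forms \eqref{lucia}, so for generic $a_1,\ldots,a_m\in\P^{m-1}$ the combinations $a_i\cdot\eta$ already cut out $\Delta_J$ and hence $v_k^{a\cdot\eta}\w(\mu_1\times_J\cdots\times_J\mu_r)=0$ for $k>m$, giving $M_k^{L,\eta}\w(\mu_1\times_J\cdots\times_J\mu_r)=0$ for $k>m$ (note $d-m=\rho$ in the proper case, cf.\ the remark after Proposition~\ref{skottradd}). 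Only the dimension-$\rho$ component survives; it \emph{is} a cycle by the dimension principle, and only then does comparing multiplicities (via \eqref{stenstod} and \cite{aswy}) legitimately identify it with $\mu_1\cdot_{\P^n}\cdots\cdot_{\P^n}\mu_r$. A similar softness affects your degree claims: \eqref{likhet} and \eqref{olikhet} are asserted (``each individual product already satisfies the B\'ezout identity,'' ``some of the expected degree is lost'') rather than derived; the paper obtains both from the explicit mass formula \eqref{bez}, which rests on the join degree formula $\deg(\mu_1\times_J\cdots\times_J\mu_r)=\prod_j\deg\mu_j$ (Proposition~\ref{gnet}) and Proposition~\ref{skottradd}, the remainder term vanishing exactly under \eqref{vill} and being nonnegative in the effective case.
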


\noindent  One should keep in mind that the $\b$-product of $r$ factors is not a repeated $\b$-product of two factors.  
In general, the $\b$-product of two factors is not associative, see Example~\ref{kokong}.

\smallskip

\noindent Notice that $\rho$ equals $n-(n-\dim\mu_1 +\cdots
+n-\dim\mu_r)$, which is 
the ``expected dimension'' of the intersection. 
The B{\'e}zout formula \eqref{likhet} may hold even if  $\rho<0$:  
For instance,  if
$\mu_j$ are different lines through the point $a$, then their $\b$-product is $a$ so that both sides
of \eqref{olikhet} are $1$, see Example \ref{kaffekopp}. 
Moreover, if we take a linear
embedding $\Pk^n\hookrightarrow \Pk^{n'}$, $n'>n$, and consider $\mu_j$ as
elements in $\B(\Pk^{n'})$, then the product is unchanged. In particular, the $\b$-self-intersection
of a $k$-plane is always the $k$-plane itself.  

The $\b$-self-intersection of the cuspidal curve
$Z=\{x_1^3-x_0x_2^2=0\}$ in  $\P^2$ is in the classical sense 
represented by 
$9$ points on $Z$ obtained as the divisor of
a generic meromorphic section of $\Ok_{\P^2}(3)$ restricted to $Z$.
The fixed part of the self-intersection in the St\"uckrad-Vogel sense is the curve itself plus
$3$ times the point $a=[1,0,0]$, whereas the moving part consists of
another three points on $Z$ that are determined up to rational equivalence on $Z$. 
Our product $Z\bullet Z$ consists of the the fixed part $Z+3a$ of the
St\"uckrad-Vogel(SV)-product and a moving component $\mu$ of dimension zero
and degree $3$; we think of $\mu$ as three points ``moving
around''  on $Z$, cf.\ Example~\ref{cusp}. 
In this case the local intersection numbers are carried by the fixed components.
In general also moving components can contribute, see, e.g., Example~\ref{theta}.
\smallskip
We also consider another intersection product that is a $\B$-variant of the classical nonproper
intersection product in \cite{Fult}:
For any regular embedding $i$, in \cite{aeswy1} we introduced a $\B$-analogue of the 
Gysin mapping $i^!$ used in \cite{Fult}, see Section~\ref{guleboj} below.
Let $i\colon\Pk^n \to  \Pk^n\times \cdots \times\Pk^n=(\Pk^n)^r$ be
the diagonal embedding in $(\Pk^n)^r$. 
In analogy with the classical intersection product in \cite{Fult} we define, for pure-dimensional $\mu_j$, 
\begin{equation*}
\mu_1\cdot_{\B(\Pk^n)} \cdots\, \cdot_{\B(\Pk^n)}\mu_r:=i^!(\mu_1\times \cdots \times\mu_r) 
\end{equation*}
in $\B(\Pk^n)$.
We have the following relation to the $\b$-product.

\begin{thm}\label{main2}
Assume that $\mu_1,\ldots, \mu_r\in\B(\Pk^n)$ have pure dimensions. 
Let $V=\cap_j|\mu_j|$ 
and let $\rho$ be as in \eqref{vill}.
Then 
\begin{equation*}
\mu_1 \cdot_{\B(\P^n)}\cdots\,  \cdot_{\B(\P^n)}\mu_r= \sum_{\ell=\max(\rho,0)}^{\dim V}
 \omega^{\ell-\rho}\w (\mu_1\bullet \cdots \b\mu_r)_\ell.
\end{equation*}
\end{thm}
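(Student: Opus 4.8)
The plan is to recognize the asserted identity as a $\B$-theoretic incarnation of van~Gastel's formula and to prove it by computing both sides from the same normal-cone data attached to the diagonal $i\colon\Pk^n\hookrightarrow(\Pk^n)^r$. Since both products are $\Z$-multilinear and depend only on the pure-dimensional classes $\mu_j$, and since $|\mu_1\b\cdots\b\mu_r|\subset V$ by Theorem~\ref{mainthm}, it suffices to prove the identity for fixed representatives and to track the two constructions through the deformation to the normal cone of $\mu_1\times\cdots\times\mu_r$ along the diagonal (equivalently, through the explicit Monge--Amp\`ere/join realization of Section~\ref{bulletsection}). Write $k=\sum_j\dim\mu_j=\dim(\mu_1\times\cdots\times\mu_r)$, so that $\rho=k-(r-1)n$ is exactly $k$ minus the codimension $(r-1)n$ of $i$, and the normal bundle of $i$ is $N_i=(T\Pk^n)^{\oplus(r-1)}$.

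For the left-hand side I would use the description of the $\B$-Gysin map from \cite{aeswy1}: $i^!(\mu_1\times\cdots\times\mu_r)$ is the dimension-$\rho$ part of $c(N_i)$ capped with the Segre-type $\B$-current $S$ of $V$ in $\mu_1\times\cdots\times\mu_r$, whose pure-dimensional pieces I denote $S_\ell$. Because $T\Pk^n$ sits in the Euler sequence $0\to\O\to\O(1)^{\oplus(n+1)}\to T\Pk^n\to0$, one has $c(T\Pk^n)=(1+\omega)^{n+1}$ and hence $c(N_i)=(1+\omega)^{(r-1)(n+1)}$, so that $c_i(N_i)=\binom{(r-1)(n+1)}{i}\omega^i$ is a pure power of $\omega$. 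Reindexing the cap product by $i=\ell-\rho$ then gives
\[
i^!(\mu_1\times\cdots\times\mu_r)=\sum_{\ell}\binom{(r-1)(n+1)}{\ell-\rho}\,\omega^{\ell-\rho}\w S_\ell,
\]
where only $\ell$ with $\max(\rho,0)\le\ell\le\dim V$ contribute.

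The heart of the matter is to match this with the right-hand side. Here I would invoke the Monge--Amp\`ere description of the $\b$-product (Definition~\ref{bulletdef} and Section~\ref{bulletsection}): its dimension-$\ell$ component $(\mu_1\b\cdots\b\mu_r)_\ell$ is the St\"uckrad--Vogel (Vogel) current extracted from the same join intersection, and van~Gastel's transformation expresses the Segre components $S_\ell$ as an explicit $\omega$-weighted combination of the Vogel components, or conversely. Substituting this transformation into the displayed Gysin expression and collapsing the resulting sum by a Vandermonde-type binomial identity should leave precisely $\sum_\ell\omega^{\ell-\rho}\w(\mu_1\b\cdots\b\mu_r)_\ell$, which is the claim. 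The main obstacle is exactly this step: one must establish the van~Gastel transformation at the level of $\B$-currents rather than on cohomology classes, and verify the combinatorial cancellation while controlling the averaging-by-integration and the Monge--Amp\`ere limits through which the $\b$-product is defined, and checking that the fixed/moving decomposition \eqref{deco} is respected.

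Finally, the summation range is pinned down by dimension reasons --- the factor $\omega^{\ell-\rho}$ forces $\ell\ge\rho$, while $(\mu_1\b\cdots\b\mu_r)_\ell$ forces $\ell\ge0$ and, being supported on $V$, also $\ell\le\dim V$ --- which explains the bounds $\max(\rho,0)\le\ell\le\dim V$; when $\rho<0$ every term lands in negative dimension and both sides vanish. A consistency check is provided by the B\'ezout identity \eqref{likhet} of Theorem~\ref{mainthm}, whose degree count must agree with the degree of both sides of the asserted formula.
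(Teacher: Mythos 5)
Your setup of the left-hand side is correct: using \eqref{bgysin} and the Euler sequence you get
\begin{equation*}
\mu_1\cdot_{\B(\P^n)}\cdots\cdot_{\B(\P^n)}\mu_r=\sum_{\ell}\binom{(r-1)(n+1)}{\ell-\rho}\,\omega^{\ell-\rho}\w S_\ell\big(\J_\Delta,\mu_1\times\cdots\times\mu_r\big),
\end{equation*}
and your instinct that a Vandermonde collapse will eliminate these binomial coefficients is also sound (one can check that $\sum_\ell\binom{(r-1)(n+1)}{\ell-\rho}\binom{-j}{d-j-\ell}=\binom{(r-1)(n+1)-j}{d-j-\rho}=1$ once the pieces are in place). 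But there is a genuine gap at the step you call ``the heart of the matter,'' and you have misdiagnosed what is missing. The van~Gastel formulas \eqref{gastel12}--\eqref{gastel22} are already established for generalized cycles in \cite{aeswy1}; that is not the obstacle. What they give is a relation between $M^\eta$ and $M^{L,\eta}$ for the \emph{same} section $\eta$ on the \emph{same} space. Your argument needs something different and stronger: a comparison between the $\B$-Segre class of the \emph{ordinary} diagonal $\J_\Delta$ acting on $\mu_1\times\cdots\times\mu_r$ in $(\P^n)^r$, and the Monge--Amp\`ere/SV class of the \emph{join} diagonal $\J_J$ acting on $\pi_*p^*(\mu_1\times\cdots\times\mu_r)$ in $\P^{r(n+1)-1}$. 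These are two different ideal sheaves on two different ambient spaces, and no version of van~Gastel's transformation connects them; the phrase ``extracted from the same join intersection'' in your proposal silently conflates the two pictures.

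This missing comparison is precisely the content of the paper's Proposition~\ref{baka2} (the identity $i^{!!}=j^\flat$ between \eqref{grymmaste} and \eqref{grymmare}), and it is where essentially all the work of Section~\ref{rela} lies: one passes through the blow-up $Bl\,\P^{2n+1}_{x,y}$, proves the normal bundle splitting $N_{j(M)}Y=p^*N_{i(M)}X\oplus\L$ of Lemma~\ref{lupin}, and establishes the current-level projection formula $M^\fff\w\mu\sim p_*\big(\hat c(\L)\w M^\phi\w p^*\mu\big)$, which in turn requires the fiber-square pullback construction, the identity $[Z_{\tau^*\fff}]=\tilde\pi_*[Z_{\rho^*\phi}]$, and \cite[Proposition~1.5]{aeswy1}. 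Once that bridge is available, the paper avoids your Vandermonde computation altogether by working directly in the join picture: there $\hat c(N_{\J_J}\P^{2n+1})=(1+\hat\omega)^{n+1}$, van~Gastel \eqref{gastel22} turns $j^\flat$ into $\sum_j(1+\hat\omega)^{n+1-j}\w M_j^{L,\eta}\w\pi_*p^*\mu$, and upon extracting the dimension-$\rho$ component the exponent of $(1+\hat\omega)$ is exactly $\ell-\rho$, so the relevant binomial coefficient is identically $1$ --- no cancellation argument is needed. Two smaller points: your worry about the fixed/moving decomposition \eqref{deco} is a red herring (nothing in the proof uses it), and your closing remark about $\rho<0$ is correct but should be stated as a dimension-principle observation, not as part of the main argument.
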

In particular, $\mu_1
\cdot_{\B(\P^n)}\cdots\,  \cdot_{\B(\P^n)}\mu_r=\mu_1
\cdot_{\P^n}\cdots\,  \cdot_{\P^n}\mu_r$ if $\mu_1, \ldots, \mu_r$
are cycles that intersect properly, see \eqref{studsmatta}. 


\smallskip
In \cite[Section~10]{aeswy1}  we introduced cohomology groups $\widehat H^{*,*}(V)$ for a
reduced subvariety $V\hookrightarrow\Pk^n$ of pure dimension $d$ that coincide with
usual de~Rham cohomology $H^{*,*}(V)$ when $V$ is smooth.  There are natural mappings
$\A_k(V)\to  \widehat H^{d-k,d-k}(V)$ and $\B_k(V)\to  \widehat H^{d-k,d-k}(V)$.

\begin{thm}\label{main3}
Assume that $Z_1,\ldots, Z_r$ are cycles in $\P^N$ and let $V=\cap_j|Z_j|$.
The images in $\widehat H^{*,*}(V)$ of the Chow class $Z_1\cdot_{\Pr^n} \cdots \,  \cdot_{\Pk^n}Z_r$  and the $\B$-class
$Z_1\cdot_{\B(\P^n)} \cdots \,   \cdot_{\B(\P^n)} Z_r$ coincide. 
\end{thm}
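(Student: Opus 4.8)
The plan is to reduce the statement to a single compatibility property of the Gysin map of the diagonal. By definition both products are $i^!(Z_1\times\cdots\times Z_r)$, where $i\colon\Pk^n\to(\Pk^n)^r$ is the diagonal embedding and $i^!$ denotes the classical Chow Gysin map in the first case and its $\B$-analogue from \cite{aeswy1} in the second; the point to exploit is that the argument $Z_1\times\cdots\times Z_r$ is one and the same cycle on $(\Pk^n)^r$. Both products are of pure dimension $\rho$ (and vanish for $\rho<0$), so it suffices to show that the two realizations of $i^!$ produce the same element of $\widehat H^{*,*}(V)$, i.e.\ that the two composites in
\[
\xymatrix{
 & \A_*(V) \ar[dr] & \\
\Zy\big((\Pk^n)^r\big) \ar[ur]^-{i^!} \ar[dr]_-{i^!} & & \widehat H^{*,*}(V) \\
 & \B_*(V) \ar[ur] &
}
\]
agree on $Z_1\times\cdots\times Z_r$, the upper $i^!$ being the Chow map and the lower one its $\B$-analogue.

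To verify this I would unwind both versions of $i^!$ through the same geometric recipe: deformation to the normal bundle $N=(T\Pk^n)^{\oplus(r-1)}$ of the diagonal, specialization of $Z_1\times\cdots\times Z_r$ to the normal cone inside $N$, and intersection with the zero section, i.e.\ multiplication by the Euler (top Chern) class of $N$. In the setting of \cite{aeswy1} each step is performed on currents, the last one as a Monge-Amp\`ere type product with Chern forms of $N$. The three building blocks---proper pushforward, pullback under the smooth deformation, and multiplication by Chern forms---each commute with the natural maps to $\widehat H^{*,*}$, and by the construction of $\widehat H$ in \cite[Section~10]{aeswy1} the map $\B_*\to\widehat H^{*,*}$ sends multiplication by a Chern form to cup product by the corresponding Chern class. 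Hence the two composites in the diagram agree on the common class of $Z_1\times\cdots\times Z_r$, which is the assertion.

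The hard part will be the specialization step: one must check that passing to the normal cone commutes with the maps into $\widehat H^{*,*}$, i.e.\ that the current-level $\B$-specialization of \cite{aeswy1} lifts the Chow specialization and that the moving components it may create represent in $\widehat H^{*,*}$ precisely the cohomology image of the Chow class. A convenient way to organize this is via Theorem~\ref{main2}: writing $Z_1\cdot_{\B(\Pk^n)}\cdots\cdot_{\B(\Pk^n)}Z_r=\sum_{\ell}\omega^{\ell-\rho}\w(Z_1\bullet\cdots\b Z_r)_\ell$ reduces the comparison to matching the $\bullet$-product with the classical St\"uckrad-Vogel product in $\widehat H^{*,*}$, whereupon the classical van~Gastel formula \cite{gast}, which has exactly the shape of Theorem~\ref{main2}, recovers the Chow class from the St\"uckrad-Vogel components by the same expression and yields the desired equality in $\widehat H^{*,*}(V)$.
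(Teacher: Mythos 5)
Your reduction is precisely the one the paper uses: by definition both products are $i^!(Z_1\times\cdots\times Z_r)$ for the diagonal regular embedding $i\colon\Pk^n\to(\Pk^n)^r$, applied to one and the same cycle, so the theorem is equivalent to saying that the classical Gysin map \eqref{agysin} and the $\B$-Gysin map \eqref{bgysin} of a \emph{cycle} have the same image in $\widehat H^{*,*}$. At this point the paper is done in one line: that compatibility is exactly Proposition~\ref{ab0}, recalled in the Preliminaries from \cite[Proposition~1.7]{aeswy1}, and this is how Proposition~\ref{stupa}, i.e.\ \eqref{sparv1}, is proved.

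The gap is that you neither invoke nor actually prove this compatibility. Your plan is to verify it by pushing both Gysin maps through deformation to the normal cone, but that recipe is not available on the $\B$-side: the $\B$-Gysin map is \emph{defined} by $i_*\iota^!\mu=\big(c(N_\J X)\w S(\J,\mu)\big)_{k-\kappa}$, where $S(\J,\mu)$ is the Monge-Amp\`ere-type $\B$-Segre class, and no deformation-to-the-normal-cone description of it is established in this framework; producing one would be a substantial theorem in itself, essentially of the same depth as the statement you want. You flag the ``specialization step'' yourself as unchecked, and that is exactly where the content lies: since the factor $c(N_\J X)$ is common to \eqref{agysin} and \eqref{bgysin}, what must be compared is $s(\J,\mu)$ with $S(\J,\mu)$ in $\widehat H^{*,*}$, and this comparison is never carried out. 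The fallback through Theorem~\ref{main2} and van~Gastel has the same defect: it requires that the $\bullet$-product (an average over St\"uckrad--Vogel cycles whose moving parts depend on the choices) and a single classical SV-cycle have equal images in $\widehat H^{*,*}(V)$, and that van~Gastel's identity descends to $\widehat H^{*,*}(V)$ rather than merely to rational equivalence on $\P^n$; neither is proved, and both are nontrivial. With Proposition~\ref{ab0} cited, your first paragraph already completes the proof; without it, the argument is not complete.
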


\smallskip
The plan of the paper is as follows. Sections~\ref{prel} through \ref{vogelsec}
contain mainly material from \cite{aeswy1} and well-known facts from \cite{Fult}, 
as well as the definition of  local intersection numbers and of 
the notion of an effective generalized cycle.
The product $\cdot_{\B(Y)}$ is introduced in Section~\ref{intprod}. 
In Section~\ref{bulletsection} we  define the  $\b$-product and prove 
Theorem~\ref{mainthm}, whereas the connection to the $\cdot_{\B(\Pr^n)}$-product
is worked out in Section~\ref{rela}.  Finally we have collected several 
examples in Section~\ref{exsektion}.

\smallskip
\noindent{\bf Ackowledgement:}  We are grateful to Jan Stevens for 
valuable discussions on the
ideas in this paper.

\section{Preliminaries}\label{prel}
Throughout this section $X$ is a reduced analytic space of pure dimension $n$. 
We will recall some basic notions from intersection theory that
can be found in \cite{Fult}, and some notions and results from
\cite{aeswy1}; however the material
in Section~\ref{effective} and Lemma \ref{skaldjur} is new. 
We formulate statements in terms of  coherent sheaves, rather than schemes.

\subsection{Currents and cycles}
We say that a current $\mu$ on $X$ of bidegree $(n-k, n-k)$ has {\it (complex) dimension}  $k$.
If $f\colon X'\to X$
is a proper mapping of analytic spaces, then $f_*$ is well-defined on currents
and preserves dimension.
If $\mu$ is a current on $X'$ and $\eta$ is a smooth form on $X$, then 
\begin{equation}\label{ingen}
\eta\wedge f_*\mu = f_*(f^*\eta\wedge \mu). 
\end{equation} 
If $\mu$ has order zero then $f_*\mu$ has order zero. If $V\hookrightarrow X$ is a
subvariety, then  
\begin{equation}\label{linalg}
\1_V f_*\mu=f_*(\1_{f^{-1} V} \mu). 
\end{equation}
If $V\hookrightarrow X$ has dimension $k$,
then its associated Lelong current (current of integration) $[V]$ has dimension $k$.
We will often identify $V$ and $[V]$. 
An analytic $k$-cycle $\mu$ on $X$ is a formal locally finite linear combination $\sum
a_j V_j$, where $a_j\in \Z$ and  $V_j\subset X$ are irreducible
analytic sets of dimension $k$. 
%
We denote the $\Z$-module of analytic $k$-cycles on $X$ by
$\Zy_k(X)$. 
The support $|\mu|$ of $\mu\in \Zy_k(X)$ coincides with the 
support of its associated Lelong current. 
Recall that $\mult_x \mu=\ell_x \mu$, 
where $\ell_x\mu$ denotes the Lelong number (of the Lelong current) of
$\mu\in\Zy_k(X)$ at
$x$, and $\mult_x\mu$ is the multiplicity of $\mu$ at
$x$, see \cite[Chapter~2.11.1]{Ch}.

If  $f:X'\to X$ is a proper mapping, then we have a mapping
\begin{equation}\label{monogam}
f_*\colon \Zy_k(X')\to\Zy_k(X),
\end{equation}
and the Lelong current of the direct image $f_*\mu$ is
the direct image of the Lelong current of $\mu$.  
If $i\colon V\hookrightarrow X$ is a subvariety,  
then $\mu\in \Zy_k(V)$ can be identified with the cycle 
$i_*\mu\in\Zy_k(X)$. 
The cycle $\mu\in \Zy_k(X)$ is \emph{rationally equivalent} to $0$  on $X$, 
$\mu\sim 0$, if
there are finitely many subvarieties
$i_j:V_j\hookrightarrow X$ of dimension $k+1$ and  
non-trivial meromorphic functions $g_j$ on $V_j$ 
such that\footnote{Here $d^c=(\partial-\dbar)/4\pi i$
so that $dd^c\log|z|^2=[0]$ in $\C$, writing $[0]$ rather than  $[\{0\}]$ for the point mass 
at $0$.}
\begin{equation*}
\mu=\sum_j (i_j)_* [\div g_j]=\sum_j (i_j)_*dd^c\log|g_j|^2=
\sum_j dd^c(\log|g_j|^2[V_j]).
\end{equation*}
We denote the \emph{Chow group} of cycles $\Zy_k(X)$ modulo rational
equivalence by $\A_k(X)$. 
 If $f:X'\to X$ is a proper morphism and $\mu\sim 0$ in $\A_k(X')$, then
$f_*\mu\sim 0$ in $\A_k(X)$ and there is an induced mapping 
 $f_*\colon \A_k(X')\to \A_k(X)$.

\subsection{Chern and Segre forms}\label{seg}
Recall that to any Hermitian line bundle\footnote{All line bundles and vector bundles
and morphism between them are assumed to be holomorphic.}
$L\to X$ there is an
associated (total) Chern form\footnote{For Chern and Segre forms (and
  classes), 
the index $k$ denotes the component of bidegree $(k,k)$, i.e., of (complex) dimension $n-k$.} 
$\hat c(L)=1+\hat c_1(L)$ and that
two  Hermitian metrics give rise to
Chern forms whose difference is $dd^c \gamma$ for a
smooth form $\gamma$ on $X$.
We let $c(L)$ denote the associated cohomology class.

Assume that $E\to X$ is a Hermitian vector bundle, and let
$\pi\colon \P(E)\to X$ be the projectivization of $E$, i.e., the
projective bundle of lines in $E$.  
Let $L=\Ok(-1)$ be the tautological
line bundle in the pullback $\pi^*E\to\Pr(E)$, and let $\hat c(L)$ be the induced
Chern form on $\Pr(E)$. Since $\pi$ is a submersion, 
$\hat s(E):=\pi_*(1/\hat c(L))$ is a smooth form on $X$ called the Segre
form of $E$.   If $E$ is a line bundle, then $\P(E)\simeq X$ and hence
\begin{equation}\label{smal}
\hat c(E)=1/\hat s(E).
\end{equation}
For a general Hermitian $E\to X$ we take \eqref{smal} as the definition
of its associated Chern form. 
If $f\colon X'\to X$
is a proper mapping, then 
\begin{equation}\label{sol}
\hat c_\ell(f^*E)=f^*\hat c_\ell(E).
\end{equation}
Since $\pi$ is a submersion two different metrics on $E$ give rise to  Segre forms and Chern forms
that differ by $dd^c\gamma$ for a smooth form $\gamma$ on $X$.  
The induced cohomology classes are denoted by $s(E)$ and $c(E)$, respectively.   
%
There are induced mappings
\begin{equation*}
\A_k(X)\to \A_{k-\ell}(X),\quad \alpha\mapsto c_\ell(E)\cap \alpha.
\end{equation*}


\subsection{Generalized cycles} \label{potta}
Generalized cycles on $X$ were introduced in  \cite{aeswy1} and all
statements in this subsection except Lemma \ref{skaldjur} 
are proved in \cite[Sections 3 and 4]{aeswy1}. 
We say that a current $\mu$ is
a {\it generalized cycle} if it is a locally finite 
linear combination over $\Z$ 
of currents of the form
$
\tau_* \alpha,
$
where $\tau\colon W\to X$ is a proper map, $W$ is smooth and connected, and
$\alpha$ is a product of  components of  Chern forms for various Hermitian
vector bundles over $W$, 
i.e.,  
\begin{equation}\label{seedan}
\alpha=\hat c_{\ell_1}(E_1)\wedge \cdots \wedge \hat c_{\ell_r}(E_r), 
\end{equation} 
where $E_j$ are Hermitian vector bundles over $W$.  One can just as well use components of Segre forms,
and one can in fact assume that all $E_j$ are line bundles.

Notice that a generalized cycle is a real closed current of order zero  with
components of bidegree $(*,*)$. 
We let $\GZ_k(X)$ denote the $\Z$-module of generalized cycles of (complex) \emph{dimension} $k$
(i.e., of bidegree $(n-k,n-k)$) and we let $\GZ(X)=\bigoplus
\GZ_k(X)$. 
If $\mu\in\GZ(X)$ and $\gamma$ is a component of a Chern form on $X$, then
$\gamma\w\mu\in\GZ(X)$. If $E\to X$ is a Hermitian vector bundle we thus have mappings
$\GZ_k(X)\to \GZ_{k-\ell}(X)$ defined by $\mu\mapsto \hat c_\ell(E)\w\mu$.
If $i\colon V\hookrightarrow X$ is a subvariety and $\mu\in\GZ(X)$,  then
$
\1_V\mu\in\GZ(X).
$
More precisely, if 
\begin{equation}\label{husmus}
\mu=\sum_j(\tau_j)_* \alpha_j,
\end{equation}
where $\tau_j:W_j\to X$, then 
$$
\1_V \mu =\sum_{\tau_j(W_j)\subset V} (\tau_j)_*\alpha_j.
$$
Each subvariety of $X$ is a generalized cycle so we have an embedding
$$
\Zy_k(X)\to \GZ_k(X).
$$
%
Given $\mu\in\GZ(X)$ there  is a smallest variety $|\mu|\subset X$,
the \emph{Zariski support} of $\mu$, such that
$\mu$ vanishes outside $|\mu|$. 
If $f\colon X'\to X$ is  proper, then we have a natural mapping
$$
f_*\colon \GZ_k(X')\to \GZ_k(X)  
$$
that coincides with \eqref{monogam}  on $\Zy_k$.
If $i\colon V \hookrightarrow  X$ is a subvariety, then
\begin{equation}\label{linser}
i_*\colon \GZ_k(V)\to \GZ_k(X)
\end{equation} 
is an injective mapping whose image is precisely those $\mu\in\GZ_k(X)$ such that
$|\mu|\subset V$.
Thus we can
identify $\GZ(V)$ with generalized cycles in $X$ that have Zariski support on $Z$.
We have the 

\smallskip
\noindent {\it Dimension principle}: 
{\it Assume that  $\mu\in\GZ_k(X)$ has Zariski support on a variety
$V$. If $\dim V =k$, then  $\mu\in\Zy_k(X)$.
If $\dim V<k$, then $\mu=0$.}

\smallskip


A nonzero generalized cycle $\mu\in\GZ(X)$ is \emph{irreducible} if
$|\mu|$ is irreducible and   
$\1_V\mu=0$ for any proper analytic subvariety $V\hookrightarrow |\mu|$.
If $\mu$ has Zariski support $V\subset X$ it is irreducible if and only if
$V$ is irreducible and $\mu$ has a representation \eqref{husmus}  where 
$\tau_j(W_j)=V$ for each $j$.
An irreducible $\mu\in \GZ(X)$  has the decomposition  
$
\mu=\mu_p+ \cdots + \mu_1 +\mu_0,   \, \mu_k\in\GZ_k(X), $ where $p$
is the dimension of $|\mu|$. 
%
%
Each $\mu\in\GZ(X)$ has a unique decomposition
\begin{equation*}
\mu =\sum_\ell\mu_\ell,
\end{equation*}
where  $\mu_\ell$ are  irreducible with different Zariski supports.

\smallskip

If 
$0\to S\to E\to Q\to 0$ is a short exact sequence of Hermitian vector bundles over $X$, then
we say that
$
\hat c(E)-\hat c(S)\w\hat c(Q)
$
is a $B$-form. 
If $\beta$ is a component of a $B$-form, then 
there is a smooth form $\gamma$ on $X$ such that
$dd^c\gamma=\beta$. 
We say that  $\mu \in \GZ_k(X)$ is equivalent to $0$ in $X$,
$\mu\sim 0$,  if $\mu$ is a locally finite sum of currents of the form
\begin{equation}\label{kusin}
\rho=\tau_* (\beta \w \alpha) = dd^c \tau_*(\gamma\wedge \alpha),
\end{equation}
where $\tau\colon W\to X$ is proper,  $\beta$ is
a component
of a $B$-form,  $\alpha$ is a product of components of Chern or Segre
forms, and $\gamma$ is a smooth form on $W$.   
If $\mu=\mu_0+\cdots +\mu_n$, where $\mu_k\in \GZ_k(X)$ we say that
$\mu\sim 0$ if $\mu_k\sim 0$ for each $k$. Let $\B(X)$ denote the
$\Z$-module of generalized cycles on $X$ modulo this equivalence. A
class $\mu\in \B(X)$ has \emph{pure dimension} $k$, $\mu\in \B_k(X)$,
if $\mu$ has a representative in $\GZ_k(X)$. Thus  $\B(X)=\oplus_k\B_k(X)$. 
%
%
The mapping $\Zy(X)\to \B(X)$ is injective
so we can consider $\Zy(X)$ as a subgroup of $\B(X)$.

If $\mu\in\B(X)$ and $\hat \mu\in\GZ(X)$ is a representative for $\mu$, then 
the \emph{Zariski support} $|\mu|\subset X$ of $\mu$ is the union of the Zariski 
supports of the irreducible components of $\hat\mu$ that are  nonzero
in $\B(X)$.  
Moreover, $\mu\in \B(X)$ is \emph{irreducible} if there is a
representative $\hat \mu\in \GZ(X)$ that is irreducible. 
The decomposition into irreducible components, as well as the
decomposition into components of different dimensions,  
extend from $\GZ(X)$ to $\B(X)$. 
%

%
If $0\to S\to E\to Q\to 0$ is a short exact sequence of Hermitian vector bundles
and $\hat\mu\in\GZ(X)$, then 
\begin{equation}\label{trump}
\hat c_\ell(E)\w\hat\mu\sim (\hat c(S)\w\hat c(Q))_\ell\w\hat\mu.
\end{equation}
In particular,  if $E$ and $E'$ are the same vector bundle with two
different Hermitian metrics, then $\hat c_\ell(E)\w\hat\mu\sim \hat c_\ell(E')\w\hat\mu$ 
so we have mappings
$$
\B_k(X)\to \B_{k-\ell}(X), \quad  \mu\mapsto c_\ell(E)\w\mu. 
$$
If $f\colon X'\to X$ is a proper mapping, then we have a natural mapping
$$
f_* \colon \B(X')\to \B(X).
$$
If $i\colon V\hookrightarrow X$ is a subvariety, then 
\begin{equation}\label{hopper}
i_*\colon \B(V)\to \B(X)
\end{equation}
 is injective,
and we can identify its image with the elements in $\B(X)$ that have Zariski support
on $V$. 
%
%

Each $\mu\in \B_k(X)$ (and $\mu\in \GZ_k(X)$)
has a unique decomposition  \eqref{deco}
where $\mu_{fix}$ is a cycle of pure dimension $k$ and
the irreducible components of $\mu_{mov}$ have Zariski supports of 
dimension strictly larger than $k$.
%
We say that the irreducible components of $\mu_{fix}$ are \emph{fixed} and that the
irreducible components of $\mu_{mov}$ are \emph{moving}. 

We will need the following simple lemma. 
\begin{lma}\label{skaldjur}
Assume that $\mu_1,\ldots, \mu_r$ are generalized cycles on reduced
analytic spaces $X_1,\ldots, X_r$. Let $p_j\colon
X_1\times \cdots \times X_r\to X_j$ be the natural projections. Then 
\begin{equation*}
\mu_1\times\cdots \times \mu_r:=p_1^*\mu_1\w\cdots\w p_r^*\mu_r
\end{equation*}
is a generalized cycle on  $X_1 \times \cdots \times X_r$. 
If $\mu_j\sim 0$ in $X_j$ for some $j$, then
$\mu_1\times\cdots\times\mu_r\sim 0$ in $X_1 \times \cdots \times X_r$. 
\end{lma}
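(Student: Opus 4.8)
The plan is to reduce both assertions, by $\Z$-multilinearity, to the case of generators and then to a single base-change identity. Write $X=X_1\times\cdots\times X_r$, let $p_j\colon X\to X_j$ be the projections, and assume each $\mu_j=(\tau_j)_*\alpha_j$ with $\tau_j\colon W_j\to X_j$ proper, $W_j$ smooth and connected, and $\alpha_j$ a product of components of Chern forms as in \eqref{seedan}. Set $W:=W_1\times\cdots\times W_r$, which is smooth and connected, let $q_j\colon W\to W_j$ be the projections, and let $\tau:=\tau_1\times\cdots\times\tau_r\colon W\to X$, which is proper since each $\tau_j$ is. Each $p_j^*\mu_j$ is a well-defined current of order zero, the wedge notation $p_1^*\mu_1\w\cdots\w p_r^*\mu_r$ being legitimized by the fact that it is the external (tensor) product of order-zero currents. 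The whole proof rests on the identity
\begin{equation}\label{eq:bc}
\mu_1\times\cdots\times\mu_r=\tau_*\big(q_1^*\alpha_1\w\cdots\w q_r^*\alpha_r\big).
\end{equation}

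For the first statement, note that by \eqref{sol} each $q_j^*\alpha_j$ is again a product of components of Chern forms, of the pulled-back Hermitian bundles $q_j^*E$ on $W$; hence so is $\alpha:=q_1^*\alpha_1\w\cdots\w q_r^*\alpha_r$. Thus the right-hand side of \eqref{eq:bc} is $\tau_*\alpha$ with $\tau\colon W\to X$ proper, $W$ smooth and connected, and $\alpha$ a product of Chern-form components, which by definition is a generalized cycle on $X$. Granting \eqref{eq:bc}, this proves $\mu_1\times\cdots\times\mu_r\in\GZ(X)$.

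To establish \eqref{eq:bc} it suffices to show that proper direct image commutes with external products, i.e.\ that $\tau_*(q_1^*\beta_1\w\cdots\w q_r^*\beta_r)=\big((\tau_1)_*\beta_1\big)\times\cdots\times\big((\tau_r)_*\beta_r\big)$ for currents $\beta_j$ on $W_j$, and then to specialize $\beta_j=\alpha_j$. Since the $\alpha_j$ are smooth, the left-hand side is the direct image under the proper map $\tau$ of a smooth form, and the identity follows by testing against decomposable forms $p_1^*\phi_1\w\cdots\w p_r^*\phi_r$, using $\tau^*p_j^*\phi_j=q_j^*\tau_j^*\phi_j$ (because $p_j\circ\tau=\tau_j\circ q_j$) together with Fubini on $W$; by density this determines both currents. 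I expect this commutation to be the only genuine point, as it is where properness and the transversality of the projections enter; for smooth forms it is classical, and here everything is reduced to the smooth case on $W$.

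For the second statement, by commutativity and multilinearity we may assume $j=1$ and that $\mu_1$ is a single generator of the form \eqref{kusin}, that is $\mu_1=\tau_*(\beta\w\alpha)$, where $\tau\colon W_1\to X_1$ is proper, $\beta$ is a component of a $B$-form arising from a short exact sequence $0\to S\to E\to Q\to 0$ of Hermitian bundles on $W_1$, $\alpha$ is a product of Chern or Segre form components, and $\ddc\gamma=\beta$ for a smooth $\gamma$; the remaining $\mu_j=(\tau_j)_*\alpha_j$ are generators as above. Applying \eqref{eq:bc} gives
\begin{equation*}
\mu_1\times\cdots\times\mu_r=\tau'_*\big(q_1^*\beta\w\widetilde\alpha\big),\qquad \widetilde\alpha:=q_1^*\alpha\w q_2^*\alpha_2\w\cdots\w q_r^*\alpha_r,
\end{equation*}
where $\tau'=\tau\times\tau_2\times\cdots\times\tau_r$. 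Pulling back the short exact sequence by $q_1$ yields the short exact sequence $0\to q_1^*S\to q_1^*E\to q_1^*Q\to 0$ of Hermitian bundles on $W$, so by \eqref{sol} the form $q_1^*\beta$ is a component of a $B$-form on $W$; likewise $\widetilde\alpha$ is a product of Chern or Segre form components, and $\ddc(q_1^*\gamma)=q_1^*\beta$ since $q_1$ is holomorphic. Hence $\mu_1\times\cdots\times\mu_r=\tau'_*(q_1^*\beta\w\widetilde\alpha)=\ddc\,\tau'_*(q_1^*\gamma\w\widetilde\alpha)$ is exactly of the form \eqref{kusin}, and therefore $\mu_1\times\cdots\times\mu_r\sim 0$. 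This completes the plan.
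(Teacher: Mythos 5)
Your proof is correct and follows essentially the same route as the paper's: reduce to generators $\mu_j=(\tau_j)_*\alpha_j$, observe that $\mu_1\times\cdots\times\mu_r=(\tau_1\times\cdots\times\tau_r)_*(q_1^*\alpha_1\w\cdots\w q_r^*\alpha_r)$ with the pulled-back forms again products of Chern-form components, and handle the $\sim 0$ case by pulling back the $B$-form component. The only difference is that you spell out details the paper leaves implicit (the Fubini/base-change verification of the key identity and the pullback of the short exact sequence), which is harmless.
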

In particular, 
for $\mu_j\in \B(X_j)$,
$j=1,\ldots, r$, there is a well-defined 
$\mu_1\times\cdots\times\mu_r\in\B(X_1\times\cdots\times X_r)$.

\begin{proof}
Assume that 
$\mu_j=(\tau_j)_* \alpha_j$, where $\tau_j\colon W_j\to X_j$ are proper and $\alpha_j$ are products
of components of Chern forms. 
Let $\pi_j\colon W_1\times \cdots \times W_r\to W_j$ be the natural
projections. 
Then $\pi_1^*\alpha_1\w\cdots\w
\pi_r^*\alpha_r$ is a product of components of Chern forms on
$W_1\times \cdots\times W_r$ and 
\begin{equation*}
\mu_1\times\cdots\times \mu_r= p_1^*\mu_1\w\cdots\w p_r^*\mu_r=(\tau_1\times\cdots\times\tau_r)_*(\pi_1^*\alpha_1\w\cdots\w \pi_r^*\alpha_r),
\end{equation*}
and hence it is a generalized cycle on $X_1 \times\cdots\times X_r$.
If, say, $\mu_1\sim 0$, we may assume,  
cf.~\eqref{kusin}, 
that $\mu_1=(\tau_1)_*(\beta\w\alpha_1)$,  where $\beta$ is a
component of a $B$-form. Then also  $\pi_1^*\beta$ is a component of a
$B$-form.
Now $\mu_1\times\cdots\times\mu_r$ is the
push-forward of $\pi_1^*\beta\w \pi_1^*\alpha_1\w
\pi_2^*\alpha_2\w\cdots\w\pi_r^*\alpha_r$ and therefore it is equivalent to $0$ in $X_1\times\cdots\times X_r$ by definition.
\end{proof}

 
\subsection{Effective generalized cycles} \label{effective}
We say that a generalized cycle $\mu$ is {\it effective} if it is a positive current, see, e.g., 
\cite[Ch.III~Definition~1.13]{Dem2}.  Clearly effectivity is preserved under direct images.  

\begin{lma} 
Let $\mu=\mu_1+\mu_2\cdots $ be the decomposition of $\mu\in\GZ(X)$ into its
irreducible components.   Then $\mu$ is effective if and only if each $\mu_j$ is effective. 
\end{lma}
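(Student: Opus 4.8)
The plan is to reduce the statement to a fact about positive currents of order zero and then to isolate each irreducible component by a suitable restriction. One implication is immediate: the decomposition $\mu=\mu_1+\mu_2+\cdots$ is locally finite, and a locally finite sum of positive currents is again positive, so if each $\mu_j$ is effective then so is $\mu$. For the converse I would first reduce to a fixed dimension. Since a generalized cycle is a current of order zero and effectivity means positivity, and since positivity is a condition on each bidegree component separately (so $\mu$ is effective precisely when each pure-dimensional component is a positive current), it suffices to show the following: for each $\ell$, writing $\nu:=\mu^{(\ell)}=\sum_j\nu_j$ with $\nu_j:=(\mu_j)_\ell\in\GZ_\ell(X)$, if $\nu$ is positive then each $\nu_j$ is positive. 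Indeed, once each $(\mu_j)_\ell$ is known to be positive for all $\ell$, the pure-dimensional components of $\mu_j$ are all positive, whence $\mu_j$ is effective.

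Next I would record the structural facts furnished by irreducibility. Set $V_j:=|\mu_j|$; these are distinct irreducible varieties. Since $\mu_j$ is irreducible, $\1_W\mu_j=0$ for every proper subvariety $W\subsetneq V_j$. As $\1_W$ preserves bidegree and hence commutes with the decomposition into dimensions, $\1_W\nu_j=(\1_W\mu_j)_\ell=0$ as well. Recalling that for a current of order zero the operation $\1_W$ is the restriction of its coefficient measures to $W$, this says exactly that $\nu_j$ puts no mass on any proper subvariety of $V_j$; in particular $\nu_j$, when nonzero, has Zariski support equal to $V_j$.

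Now I would localize. For a current of order zero, restriction to an arbitrary Borel set $B$ — multiplying its coefficient measures by $\1_B$ — is well defined and preserves positivity, and for $B$ a subvariety it agrees with the operation $\1_V$ of the paper. Fix $j$ and put
\[
N_j:=(V_j)_{\mathrm{sing}}\cup\bigcup_{k\ne j,\ V_k\not\supseteq V_j}(V_j\cap V_k),
\]
a locally finite union of proper subvarieties of $V_j$, hence a proper subvariety; set $V_j^\circ:=V_j\setminus N_j$. I claim $\1_{V_j^\circ}\nu=\nu_j$. Indeed $\1_{V_j^\circ}\nu=\sum_k\1_{V_j^\circ}\nu_k$, and the terms with $k\ne j$ vanish: if $V_k\not\supseteq V_j$ then $V_k\cap V_j^\circ=\varnothing$ by construction of $N_j$, whereas if $V_k\supsetneq V_j$ then $V_j^\circ\subset V_j$ is a proper subvariety of $V_k$ and so carries no $\nu_k$-mass by the previous paragraph. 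For $k=j$ one gets $\1_{V_j^\circ}\nu_j=\nu_j$, again because $\nu_j$ gives no mass to $N_j$. Since $\nu$ is positive and Borel restriction preserves positivity, $\nu_j=\1_{V_j^\circ}\nu$ is positive, as required.

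The one genuinely delicate point — and the reason the naive "restrict $\nu$ to a generic point of $V_j$" argument is not enough — is the possibility of nested supports $V_j\subsetneq V_k$, i.e.\ a moving component sitting inside the support of a higher-dimensional generalized cycle. This is precisely resolved by the observation that each irreducible component charges no proper subvariety of its own support: restricting to the dense locus $V_j^\circ\subset V_j$ then annihilates the contribution of every larger component while leaving $\nu_j$ intact. The remaining ingredients — that $\1_B$ preserves positivity and coincides with $\1_V$ on subvarieties, and that positivity is tested bidegree by bidegree — are routine properties of positive currents of order zero.
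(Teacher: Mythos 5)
Your proof is correct, and it reaches the conclusion by a genuinely different route than the paper. Both arguments rest on the same two pillars --- restriction of a positive current of order zero preserves positivity, and an irreducible generalized cycle puts no mass on any proper analytic subvariety of its Zariski support --- but the paper organizes them as an induction on the dimension of the supports: it first isolates the components $\mu_k$ whose supports have minimal dimension (for these $\1_{V_k}\mu=\mu_k$, since no other component can charge $V_k$), and then needs a separate argument, testing the remainder $\mu'$ against arbitrary forms $A=ia_1\wedge\bar a_1\wedge\cdots\wedge ia_r\wedge \bar a_r$ and using $\1_{V'}\mu'=0$, to see that the remainder is still positive before iterating. You avoid induction altogether: restricting to the Zariski-open set $V_j^\circ=V_j\setminus N_j$ isolates each component in one step --- components with $V_k\not\supseteq V_j$ die because their supports miss $V_j^\circ$, and nested components with $V_k\supsetneq V_j$ die by irreducibility, since they charge nothing on the proper subvariety $V_j\subset V_k$. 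The price is that $V_j^\circ$ is not a subvariety, so you must use Borel restriction of order-zero currents rather than the paper's operation $\1_V$; this is legitimate (positivity passes to Borel restrictions, and it is irrelevant that $\1_{V_j^\circ}\nu$ is a priori only a current rather than a generalized cycle, since it ends up equal to $\nu_j$, which is one), and what it buys is a shorter, non-inductive proof with no ``remainder'' step. Minor points: the singular locus $(V_j)_{\mathrm{sing}}$ in $N_j$ is never used; the preliminary reduction to a fixed dimension $\ell$ is dispensable, since the same restriction argument applies verbatim to $\mu$ and $\mu_j$ (it only serves to pin down the meaning of positivity for mixed-bidegree currents, which is componentwise in any case); and calling $V_j^\circ$ a ``proper subvariety of $V_k$'' is a slip of terminology --- it is a subset of the proper subvariety $V_j$ --- though the intended argument there is clear and correct.
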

 
\begin{proof}
The if-part is clear. For the converse, let $V$ be an irreducible subvariety of $X$. We already know that $\1_V\mu$ is a generalized cycle.
It is not hard to see that it is positive if $\mu$ is positive. It is also part of the Skoda-El~Mir theorem,
see, e.g., \cite[Ch.III~Theorem~2.3]{Dem2}.   Now let $V_j$ be the Zariski supports of the various $\mu_j$ and assume that
$V_k$ has minimal dimension.  Then $V_k\cap V_j$ has positive codimension in $V_j$ for each $j\neq k$.
By the definition of irreducibility it follows that $\1_{V_k}\mu=\1_{V_k}\mu_k=\mu_k$. We conclude that
$\mu_k$ is positive for each $k$ such that $V_k$ has minimal dimension. Let $V'$ be the union of these
$V_k$ and let $\mu'$ be the sum of the remaining irreducible components. Clearly 
$\mu'$ is positive in $X\setminus V'$. Let $A=ia_1 \w\bar a_1\w\ldots\w ia_r\w\bar a_r$ for smooth
$(1,0)$-forms $a_j$ and some $r$.  It follows that 
$A\w \mu'$ is positive outside $V'$ by definition. However, $\1_{V'}\mu'=0$ and so 
$A\w \mu'=A\w\1_{X\setminus V'}\mu'$ is positive. Since $A$ is arbitrary, we conclude that $\mu'$ is positive.
Now the lemma follows by induction. 
\end{proof} 
 

 We say that $\mu\in\B(X)$ is \emph{effective} if it has a representative $\hat\mu \in\GZ(X)$ that is effective.
It follows that $\mu$ is effective if and only each of its irreducible components is effective.   
Moreover, the multiplicities of an effective $\mu\in\B(X)$ are nonnegative.

\subsection{The Segre and $\B$-Segre class}\label{byggnad} 
The material in this subsection is found in \cite[Section 5]{aeswy1} or in \cite{Fult}.
Let $\J\to X$ be a coherent ideal sheaf over $X$
with zero set $Z$.
First assume that $X$ is irreducible.  If
$\J=0$ on $X$, then we define the Segre class $s(\J,X)=s_0(\J,X)=\1_X\in\mathcal{A}_n(X)$.  Otherwise,
let $\pi\colon X'\to X$ be a modification such that $\pi^*\J$ is principal\footnote{In this paper,
$\pi^*\J$ denotes the ideal sheaf on $X$ generated by the pullback of local generators of $\J$.}.
For instance 
$X'$ can be the blowup of 
$X$ along $\J$, or its normalization.
Let $D$ be the exceptional divisor, and let $L_D$ be the associated line bundle that has 
a section $\fff^0$ that defines $D$ and hence generates $\pi^*\J$. 
Then   
\begin{equation*}
s(\J,X):= \sum_{j\ge 0}(-1)^{j}
\pi_*\big(c_1(L_D)^{j}\cap [D]\big)=
\pi_*\Big(\frac{1}{1+c_1(L_D)}\cap [D]\Big);
\end{equation*}
it is a well-defined element in $\A_*(X)$. 
If $X$ has irreducible components $X_1, X_2, \ldots$, then
$s(\J,X)=s(\J,X_1)+s(\J,X_2)+\cdots$.
Notice that $s(\J,X)$ has support in $Z$ so that it can be identified with an element $s(\J,X)$
in $\A_*(Z)$.
If $\J$ is the sheaf associated with the subscheme $V$ of $X$, then $s(\J, X)$
coincides with the classical Segre class $s(V,X)$, cf.~\cite[Corollary~4.2.2]{Fult}.


We can define the $\B$-Segre class $S(\J,X)$ in an analogous way by just interpreting $\cap$
as the ordinary wedge product. However, we are interested in more explicit representations
and also in a definition of a $\B$-Segre class on $\mu\in\B(X)$. To this end 
we assume that the ideal sheaf $\J\to X$ is generated by a holomorphic section $\fff$ of a Hermitian 
vector bundle
$E\to X$. If $X$ is projective one can always find such a $\fff$ for any coherent ideal sheaf $\J\to X$.
We shall consider Monge-Amp\`ere products on a generalized
cycle $\mu$.

\begin{thm}\label{putig}
Assume that $\fff$ is a holomorphic section of $E\to X$ and let $\J$
be the associated coherent ideal sheaf with zero set $Z$.
For each $\mu\in\GZ(X)$ the limits
\begin{equation*}
(dd^c\log|\fff|^2)^k\w\mu :=\lim_{\epsilon\to 0}
\big (dd^c\log(|\fff|^2+\epsilon)\big)^k\w\mu,
\quad k=0,1,2, \ldots,
\end{equation*}
exist and are generalized cycles with Zariski support on $|\mu|$. 
The generalized cycles
$$
M^{\fff}_k\w \mu:=\1_Z\big((dd^c\log|\fff|^2)^k\w\mu\big), \quad k=0,1,2, \ldots,
$$
have Zariski support on $Z\cap|\mu|$.
If $\mu\sim 0$, then $M^{\fff}_k\w\mu\sim 0$. If $g$ is a 
holomorphic section of another vector bundle
that also defines $\J$,  
then
$M^{\fff}_k\w\mu\sim M^{g}_k\w\mu$.
\end{thm}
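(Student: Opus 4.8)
The plan is to reduce the whole statement to a smooth ambient manifold by pushing forward, to settle convergence and the generalized-cycle property there by resolving $\J$ to a principal ideal, and then to deduce the two equivalence assertions — in particular the independence of the defining section, which is the crux — from the telescoping identity for Monge--Amp\`ere products together with the $B$-form description \eqref{kusin} of $\sim 0$. By $\Z$-linearity I may assume $\mu=\tau_*\alpha$ with $\tau\colon W\to X$ proper, $W$ smooth and connected, and $\alpha$ a product of components of Chern forms. For $\epsilon>0$ the form $\ddc\log(|\fff|^2+\epsilon)$ is smooth, so the projection formula \eqref{ingen} gives $\big(\ddc\log(|\fff|^2+\epsilon)\big)^k\w\tau_*\alpha=\tau_*\big((\ddc\log(|\tau^*\fff|^2+\epsilon))^k\w\alpha\big)$, using $\tau^*\ddc\log(|\fff|^2+\epsilon)=\ddc\log(|\tau^*\fff|^2+\epsilon)$ for the pulled-back metric. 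Since $\tau_*$ is continuous on currents, preserves $\GZ$, and maps Zariski supports into $|\mu|$, it suffices to prove convergence and to identify the limit as a generalized cycle on the smooth $W$, with $\fff$ replaced by $\tau^*\fff$ and $\alpha$ a fixed smooth closed form.

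On $W$ I would take a log resolution $\pi\colon\widetilde W\to W$ of $\J$, so that $\pi^*\fff=\fff^0\fff_1$ with $\fff^0$ a section of a Hermitian line bundle $L$ cutting out a normal-crossings divisor $D$ and $\fff_1$ a non-vanishing section of $\pi^*E\otimes L^{-1}$. Off $D$ the map $\pi$ is biholomorphic and $\log|\fff|^2$ is smooth, so only the behaviour along $D$ matters; there $|\pi^*\fff|^2=|z^a|^2\theta$ in local coordinates, with $z^a$ the monomial defining $D$ and $\theta>0$ smooth, whence $\log(|\pi^*\fff|^2+\epsilon)$ is plurisubharmonic modulo a smooth function and has analytic singularities along $D$. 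Convergence of $\big(\ddc\log(|\pi^*\fff|^2+\epsilon)\big)^k\w\pi^*\alpha$ then follows from the standard local King/Demailly-type computation for such monomial singularities, and the limit is a finite $\Z$-combination of currents $[D_I]\w(\text{smooth Chern forms})$, where $D_I$ runs over intersections of components of $D$. Pushing forward by $\pi$ and $\tau$ and using closure of $\GZ$ under direct images and multiplication by Chern form components shows that $(\ddc\log|\fff|^2)^k\w\mu$ exists, is a generalized cycle, and has Zariski support in $|\mu|$; applying $\1_Z$, which by \eqref{linser} preserves $\GZ$ and cuts the support to $Z$, gives that $M^\fff_k\w\mu$ is a generalized cycle with support in $Z\cap|\mu|$. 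If $\mu\sim0$, I take a representative $\sum\tau_*(\beta\w\alpha)$ with $\beta$ a component of a $B$-form as in \eqref{kusin}; then $\pi^*\beta$ is again such a component, it survives the resolution computation as a factor, and $\1_Z$ and $\tau_*$ preserve $\sim0$, so $M^\fff_k\w\mu\sim0$.

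For the final statement I first observe that $\fff\oplus g$ is a section of $E\oplus E'$ that also generates $\J$ and satisfies $|\fff\oplus g|^2=|\fff|^2+|g|^2$, so it suffices to prove $M^\fff_k\w\mu\sim M^{\fff\oplus g}_k\w\mu$ (and symmetrically for $g$). After the above reductions I work on a common resolution $\pi$ dominating the blowups of $\fff$ and $g$; as $\fff$ and $g$ define the same ideal, $\pi^*\fff$ and $\pi^*(\fff\oplus g)$ cut out the \emph{same} divisor $D$, so that $u:=\log|\pi^*\fff|^2$ and $w:=\log|\pi^*(\fff\oplus g)|^2$ differ by a globally smooth function $\phi$ (the singular monomial factor cancels). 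Since $\ddc u,\ddc w$ are closed and commute, the regularized telescoping identity $\big(\ddc u_\epsilon\big)^k-\big(\ddc w_\epsilon\big)^k=\ddc\big[(u_\epsilon-w_\epsilon)\textstyle\sum_{i+j=k-1}(\ddc u_\epsilon)^i\w(\ddc w_\epsilon)^j\big]$, wedged with $\pi^*\alpha$ and passed to the limit (legitimate because $u_\epsilon-w_\epsilon\to\phi$ smoothly while the Monge--Amp\`ere factors converge by the existence step), exhibits $M^\fff_k\w\mu-M^{\fff\oplus g}_k\w\mu$ as $\ddc$ of $\phi$ times a generalized cycle. Because $\ddc\phi=\hat c_1(\underline\C,e^{-\phi})-\hat c_1(\underline\C,1)$ is a component of a $B$-form (two metrics on the trivial line bundle), one further push-forward absorbing the integration currents $[D_I]$ puts this difference into the form \eqref{kusin}; hence it is $\sim0$, exactly by the mechanism already used for \eqref{trump}, and the claim follows.

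The step I expect to be the main obstacle is this last identification: passing the telescoping identity to the limit at the level of currents and recognizing the resulting $\ddc$-exact current as a genuine $B$-form contribution in the sense of \eqref{kusin}, rather than merely some $\ddc$-exact current. This forces one to keep both the smooth potential $\phi$ and the singular factors $(\ddc u)^i\w(\ddc w)^j$ in the normal form ``direct image of ($B$-form component)$\,\w\,$(product of Chern/Segre forms)'', which in turn relies on the explicit description of these Monge--Amp\`ere currents on the resolution obtained in the existence step.
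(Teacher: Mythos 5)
You should first be aware that the paper does not actually prove Theorem~\ref{putig}: the whole subsection is imported from \cite[Section~5]{aeswy1}, so the comparison below is with that proof. Your architecture is the same as there — reduce to $\mu=\tau_*\alpha$ via the projection formula \eqref{ingen}, pass to a modification on which the pulled-back section factors as $\fff^0$ times a nonvanishing section, do the local monomial computation, handle $\mu\sim 0$ by carrying the $B$-form factor through, and get section-independence from the ``two metrics on one bundle'' instance of a $B$-form. The existence step, the $\sim 0$ step, and the reduction of the last claim to comparing $\fff$ with $\fff\oplus g$ are all sound in outline.

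The genuine gap is in the execution of the section-independence step. Your telescoping identity holds for each fixed $\epsilon$, but the limit does \emph{not} pass through the factor $u_\epsilon-w_\epsilon$ as you claim: on the divisor $D$ one has $u_\epsilon=w_\epsilon=\log\epsilon$, so $u_\epsilon-w_\epsilon$ vanishes identically on $D$ for every $\epsilon$ and converges to $\phi$ only off $D$, while all the relevant mass of the Monge--Amp\`ere factors concentrates exactly on $D$. Concretely, in the one-variable model $u_\epsilon=\log(a|z|^2+\epsilon)$, $w_\epsilon=\log(b|z|^2+\epsilon)$ with constants $a,b>0$ and $t=a/b$, the substitution $z=\sqrt{\epsilon/a}\,\zeta$ gives
\begin{equation*}
\lim_{\epsilon\to 0}\,(u_\epsilon-w_\epsilon)\, dd^c u_\epsilon
=\Bigl(1-\frac{\log t}{t-1}\Bigr)[0],
\qquad\text{whereas}\qquad
\phi\cdot\lim_{\epsilon\to 0} dd^c u_\epsilon=(\log t)\,[0],
\end{equation*}
and these differ unless $t=1$. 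Hence the limit of your bracket is not ``$\phi$ times a generalized cycle''; it acquires correction terms supported on $D$ involving transcendental expressions such as $\phi/(e^\phi-1)$, and the identification of the difference $M^\fff_k\w\mu-M^{\fff\oplus g}_k\w\mu$ as $dd^c$ of a $B$-form contribution in the normal form \eqref{kusin} collapses at this point.

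The repair — and this is how \cite{aeswy1} argues — is to telescope \emph{after} the limits have been computed, among smooth forms on the resolution. Since $\fff$ and $g$ generate the same ideal, they produce the same divisor $D$, and each induces a smooth metric $m_\fff$, $m_g$ on $L_D$ by $|\fff^0|^2_{m_\fff}=|\pi^*\fff|^2$. One first proves the closed formula
\begin{equation*}
\1_D\,\lim_{\epsilon\to 0}\bigl(dd^c\log(|\pi^*\fff|^2+\epsilon)\bigr)^k\w\pi^*\alpha
=[D]\w\gamma_\fff^{\,k-1}\w\pi^*\alpha,
\qquad \gamma_\fff:=-\hat c_1(L_D,m_\fff),
\end{equation*}
which is exactly the principal-ideal case of Proposition~\ref{gata1} (with $\hat s_{k-1}(L_D)=(-\hat c_1(L_D))^{k-1}$). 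Then the difference of the two currents is $[D]\w(\gamma_\fff^{\,k-1}-\gamma_g^{\,k-1})\w\pi^*\alpha$, and the telescoping $\gamma_\fff^{\,k-1}-\gamma_g^{\,k-1}=(\gamma_\fff-\gamma_g)\w\sum_{i+j=k-2}\gamma_\fff^{\,i}\w\gamma_g^{\,j}$ now involves only smooth forms: $\gamma_\fff-\gamma_g$ is a component of a $B$-form (two metrics on $L_D$, cf.\ \eqref{trump}), the remaining factors are Chern forms, and pushing forward along $D\to X$ lands exactly in the normal form \eqref{kusin}. No interchange of limit and multiplication is needed anywhere in this route.
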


\noindent In case $\mu=\1_X$ we write $M^\fff_k$ rather than
$M^{\fff}_k\1_X$.
We let $M^\sigma\wedge \mu= M^\sigma_0\wedge \mu + M^\sigma_1\wedge
\mu+ \cdots$. 
%
%

\begin{df}\label{skott}
Assume that $\J\to X$ is defined by the section $\fff$ of the Hermitian
vector bundle $E\to X$. Given $\mu\in \B(X)$ and a representative
$\hat \mu\in\GZ(X)$, we let the $\B$-Segre class
$S_k(\J,\mu)$ be the class in $\B(X)$ defined by
$M_k^{\fff}\w\hat\mu.$  We let $S(\J,\mu)=S_0(\J,\mu)+S_1(\J,\mu)+\cdots.$
\end{df}
  
Notice that $M_k^{\fff}\w\hat\mu$ has support in $Z\cap |\mu|$ so that we may identify 
$S(\J,\mu)$ with an element in $\B(Z\cap |\mu|)$, in $\B(Z)$, or in $\B(|\mu|)$. 
If $\mu=\1_X$ we denote $S(\J, \mu)$ by $S(\J, X)$.

\begin{remark}\label{snigel}
If $\kappa=\max(0, \dim\mu-\dim (Z\cap |\mu|))$, 
then 
$$
S(\J, \mu)=S_\kappa(\J,\mu)+S_{\kappa+1}(\J,\mu)+\cdots + S_{\dim\mu}(\J,\mu).
$$
Indeed, $S_\ell(\J,\mu)$ has dimension $\text{dim}\,\mu-\ell$ and Zariski support $Z\cap |\mu|$, so 
$S_\ell(\J,\mu)=0$ if
$\text{dim}\, Z\cap\mu < \text{dim}\,\mu -\ell$ by the dimension principle. Moreover, 
clearly $S_\ell(\J,\mu)=0$ for degree reasons if $\ell>\dim \mu$. 

If $\J$ vanishes identically on $|\mu|$,  then it follows from the definition that 
$S(\J,\mu)=\mu$.
\end{remark}

One can define $M_k^{\fff}\w\mu$ by a limit procedure 
without applying $\1_Z$,
see \cite[Proposition~5.7 and Remark~5.9]{aeswy1}:



\begin{prop}\label{struts}
Let $\fff$ be a holomorphic section of a Hermitian bundle $E\to X$ and let
$$ 
M_{k,\epsilon}^{\fff}=\frac{\epsilon}{(|\fff|^2+\epsilon)^{k+1}}
(dd^c|\fff|^2)^k, \quad k=0,1,2,\ldots.
$$ 
If $\mu\in\GZ(X)$, then 
\begin{equation}\label{epsilon}
M_k^{\fff}\w\mu=\lim_{\epsilon\to 0} M_{k,\epsilon}^{\fff}\w\mu,
\quad k=0,1,2,\ldots.
\end{equation}
Moreover,  $M^{\fff}\w\mu=\sum_kM^\fff_k\wedge\mu$ is the value at $\lambda=0$, via analytic continuation from $\Re\lambda\gg 0$,
of 
$$
M^{\fff,\lambda}\w\mu=\Big(1-|\fff|^{2\lambda}+\sum_{k=1}\dbar|\fff|^{2\lambda}\w\frac{\partial|\fff|^2}{2\pi i|\fff|^2}
\w(dd^c\log|\fff|^2)^{k-1}\Big)\w\mu.
$$
\end{prop}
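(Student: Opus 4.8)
The plan is to prove both assertions by reducing, via proper push-forwards and a log resolution, to an explicit normal-crossings model in which each regularization can be evaluated by elementary one-variable integrals. First I record that $u:=|\fff|^2$ is a \emph{smooth} nonnegative function, so each $M_{k,\epsilon}^\fff=\frac{\epsilon}{(u+\epsilon)^{k+1}}(\ddc u)^k$ is a smooth form whose coefficients are locally integrable and bounded uniformly in $\epsilon$; hence $M_{k,\epsilon}^\fff\w\mu$ is well defined and the only issue is convergence as $\epsilon\to0$. Writing a representative $\mu=\sum_j(\tau_j)_*\alpha_j$ with $\tau_j\colon W_j\to X$ proper, $W_j$ smooth and $\alpha_j$ a product of Chern form components, the projection formula \eqref{ingen} gives $M_{k,\epsilon}^\fff\w(\tau_j)_*\alpha_j=(\tau_j)_*\big(M_{k,\epsilon}^{\tau_j^*\fff}\w\alpha_j\big)$, since $M_{k,\epsilon}^\fff$ is smooth and $\tau_j^*u=|\tau_j^*\fff|^2$; as $(\tau_j)_*$ is continuous this reduces everything to $X=W$ smooth and $\mu=\alpha$ a smooth closed form. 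Applying Hironaka to $\tau_j^*\fff$ and pushing down once more, I may assume $|\fff|^2=|s|^2g$ locally, where $s=z_1^{a_1}\cdots z_p^{a_p}$ is a monomial defining a normal crossings divisor and $g>0$ is smooth.

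For \eqref{epsilon} I would first localize away from $Z=\{s=0\}$: on compact subsets of $X\setminus Z$ one has $u\ge c>0$, so $\frac{\epsilon}{(u+\epsilon)^{k+1}}\to0$ uniformly while $(\ddc u)^k$ stays bounded; thus any limit of $M_{k,\epsilon}^\fff\w\mu$ is supported on $Z$, matching the support of $M_k^\fff\w\mu=\1_Z\big((\ddc\log u)^k\w\mu\big)$. To identify the two on $Z$ I use that $\gamma:=du\w d^cu$ satisfies $\gamma^2=0$, which yields the pointwise identities $(\ddc\log(u+\epsilon))^k=\frac{(\ddc u)^k}{(u+\epsilon)^k}-k\frac{(\ddc u)^{k-1}\w\gamma}{(u+\epsilon)^{k+1}}$ and $M_{k,\epsilon}^\fff=\frac{(\ddc u)^k}{(u+\epsilon)^k}-\frac{u(\ddc u)^k}{(u+\epsilon)^{k+1}}$. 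In the monomial model both right-hand sides are computed from the one-variable fact $\frac{\epsilon}{(|z|^{2a}h+\epsilon)^2}\ddc(|z|^{2a}h)\to a[z=0]$ and its higher analogues, and comparing with the Poincar\'e--Lelong expansion $(\ddc\log u)^k=(\sum_i a_i[z_i=0]+\ddc\log g)^k$ shows that $\lim_\epsilon M_{k,\epsilon}^\fff\w\mu$ exists and equals the $\1_Z$-part, namely $M_k^\fff\w\mu$. Theorem~\ref{putig}, which already gives convergence of $(\ddc\log(u+\epsilon))^k\w\mu$, then lets me transport this identity back through the push-forwards.

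For the analytic continuation statement I note that for $\Re\lambda\gg0$ each term of $M^{\fff,\lambda}\w\mu$ is an honest current depending holomorphically on $\lambda$, because $|\fff|^{2\lambda}$ and $\dbar|\fff|^{2\lambda}$ vanish to high order along $Z$. In the model $|\fff|^{2\lambda}=|z_1|^{2a_1\lambda}\cdots|z_p|^{2a_p\lambda}g^\lambda$, so the pairing of each term against a test form is a product of elementary integrals $\int|z_i|^{2a_i\lambda}(\cdots)$ whose meromorphic continuations are classical, with poles only at negative rationals; hence the continuation to a neighborhood of $\lambda=0$ exists. Evaluating at $\lambda=0$, the term $1-|\fff|^{2\lambda}$ tends to $0$ (the continuation of $|\fff|^{2\lambda}$ takes the value $1$), while the standard residue computation $\dbar|\fff|^{2\lambda}\w\frac{\partial|\fff|^2}{2\pi i|\fff|^2}\w(\ddc\log|\fff|^2)^{k-1}\big|_{\lambda=0}=M_k^\fff$ identifies the remaining terms with $\sum_k M_k^\fff\w\mu=M^\fff\w\mu$.

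The main obstacle is precisely this last identification in the model: one must verify that the current-valued continuation in $\lambda$ is \emph{regular} at $0$, so that the potential poles contributed by the individual monomial factors cancel, and that its value reproduces exactly the residue currents $M_k^\fff$ rather than merely some current supported on $Z$. This is where the precise shape of $M^{\fff,\lambda}$ — in particular the factor $\partial|\fff|^2/(2\pi i|\fff|^2)$, which supplies the correct Monge--Amp\`ere normalization — is essential, and where one leans on the explicit normal-crossings computation together with the metric-independence granted by Theorem~\ref{putig}.
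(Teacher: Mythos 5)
Your overall skeleton is the intended one, and it matches how this statement is actually handled: the paper itself does not prove Proposition~\ref{struts} but quotes it from \cite[Proposition~5.7 and Remark~5.9]{aeswy1}, and the direct argument behind it proceeds exactly as you begin — write $\mu=\sum_j(\tau_j)_*\alpha_j$, use the projection formula \eqref{ingen} and continuity of proper push-forward to reduce to a smooth $W$ with $\mu$ a smooth closed form, then principalize so that $|\fff|^2=|s|^2g$ with $s$ a normal-crossings monomial and $g>0$ smooth. Your two algebraic identities for $(dd^c\log(u+\epsilon))^k$ and $M_{k,\epsilon}^{\fff}$ (using $\gamma\w\gamma=0$) are correct, and the reduction of the degenerate components where $\tau_j^*\fff\equiv 0$ is a minor omission that is easily repaired.

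The genuine gap is the step you yourself call ``the main obstacle'': the identification of $\lim_\epsilon M_{k,\epsilon}^{\fff}\w\mu$ (and of the value at $\lambda=0$) in the monomial model is never carried out, and the principle you offer to organize it — ``comparing with the Poincar\'e--Lelong expansion $(dd^c\log u)^k=(\sum_i a_i[z_i{=}0]+dd^c\log g)^k$'' — is false for this regularization: no cross terms of that expansion survive. Concretely, take $u=|z_1z_2|^2$ in $\C^2$, $g\equiv 1$, $k=2$. Off $\{z_1z_2=0\}$ one has $dd^cu=du\w d^cu/u$, hence $(dd^cu)^2\equiv 0$ everywhere, so $M_{2,\epsilon}^{\fff}\equiv 0$ and $(dd^c\log u)^2=\lim_\epsilon(dd^c\log(u+\epsilon))^2=0$; your expansion would instead predict $2[z_1{=}0]\w[z_2{=}0]=2[0]$. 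What is true in the model, and what must be proved, is that $M_k^{\fff}=[\div s]\w(dd^c\log g)^{k-1}$ locally and that $\lim_\epsilon M_{k,\epsilon}^{\fff}$ equals exactly this, with no higher-codimension terms. The missing ingredients are the rewriting
\begin{equation*}
M_{k,\epsilon}^{\fff}=\frac{1}{2\pi i}\,\dbar\Big(\frac{u}{u+\epsilon}\Big)^k\w\frac{\partial(sg)}{sg}\w(dd^c\log g)^{k-1}+\frac{\epsilon\, u^k}{(u+\epsilon)^{k+1}}(dd^c\log g)^k
\end{equation*}
together with the convergence lemma $\dbar\big(u/(u+\epsilon)\big)^k\w\partial(sg)/(sg)\to 2\pi i\,[\div s]$, proved by dominated convergence using that $ds/s$ and $\dbar(\bar s g)/(\bar s g)$ are locally integrable in normal-crossings coordinates; the same computation is what shows the $\lambda$-continuation is regular at $0$ with value $M_k^{\fff}$. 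Without this (or an equivalent), neither \eqref{epsilon} nor the second assertion is established, and the heuristic you substitute for it would, if followed literally, produce a wrong answer. (A second, smaller inaccuracy: the value at $\lambda=0$ of $1-|\fff|^{2\lambda}$ is not $0$ but $\1_Z\mu=M_0^{\fff}\w\mu$ once components of $\mu$ inside $Z$ are taken into account.)
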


\begin{ex}
If $\mu\in \GZ(X)$ and $\gamma\wedge\mu\in\GZ(\U)$, where $\U\subset X$ is open and
$\gamma$ is a smooth form in $\U$, then by \eqref{epsilon}
\begin{equation}\label{lamm}
M^\fff\w(\gamma\w\mu)=\gamma\w M^\fff\w\mu
\end{equation}
in $\U$. 
%
\end{ex}

\begin{ex}  
If $f\colon X' \to X$ is proper, $\mu'\in\GZ(X')$, and $\mu=f_*\mu'$, then
\eqref{ingen} and  \eqref{epsilon} imply that
\begin{equation}\label{ponny}
M^{\fff}\w\mu=f_* (M^{f^*\fff}\w\mu').
\end{equation}
\end{ex}

Let $\xi$ be a section of a vector bundle
in a \nbh $\U\subset X$ of $x$ such that 
$\xi$ defines the maximal ideal at $x$.
Notice that if $\mu\in\GZ_k(X)$, then by Theorem~\ref{putig},
$M^{\xi}\w\mu$ is a  generalized cycle with Zariski support at $x$
and its image in $\B(X)$ is independent of the choice of
section $\xi$  defining the maximal ideal. In view of 
the dimension principle, see Section~\ref{potta}, 
$M^{\xi}\w\mu=M^{\xi}_k\w \mu=a[x]$ for some real 
number $a$.   
We say that $a$ is the {\it multiplicity},
$\mult_x\mu$, of $\mu$ at $x$, i.e.,
\begin{equation}\label{plutt}
\mult_x \mu=\int_\U  M^{\xi}\w\mu. 
\end{equation}
It is an integer that is independent of the choice of \nbh $\U$
and   
only depends on the class of $\mu$ in $\B(X)$.
If $\mu$ is effective (i.e., represented by a positive current), then $\mult_x\mu$ is  the Lelong number
of $\mu$ at $x$ and hence nonnegative, see \cite[Section~6]{aeswy1}.  

\begin{ex}\label{predikat} 
If $\mu\in \GZ(X)$ is of the form $\mu=\gamma\w\mu'$ in a neighborhood
of $x$, where
$\gamma$ is a closed smooth form of positive degree and $\mu'\in\GZ(X)$, then
$\mult_x\mu=0$. In fact, by \eqref{lamm}, $M^\xi\w\mu=\gamma\w M^\xi\w\mu'$ which must vanish
by the dimension principle, since $M^\xi\w\mu'$ has support at $x$ and $\gamma$ has positive degree.
\end{ex}

\subsection{Segre numbers}\label{snumbers}
Let $\J\to X$ be a coherent ideal sheaf over $X$ of codimension $p$. 
In  \cite{Twor} and \cite{GG} Tworzewski,  and Gaffney and Gassler, independently introduced,
at each point $x\in X$,  a list of numbers $(e_p(\J,X,x),\ldots, e_n(\J,X,x))$, called Segre numbers
in \cite{GG}. The Segre numbers generalize the Hilbert-Samuel multiplicity
at $x$ in the sense that if $\J$ has codimension $n$ at $x$ then $e_n(\J,X,x)$ is the Hilbert-Samuel multiplicity at $x$.  
The definitions in \cite{Twor} and \cite{GG},
though slightly different, are both of geometric nature.
There is also a purely algebraic definition, \cite{AM,AR}.  In \cite{aswy} were introduced semi-global currents
whose Lelong numbers are precisely the Segre numbers. 
These currents are generalized cycles where they are defined.  

We can define Segre numbers for  $\J$ over a  generalized cycle $\mu\in\GZ(X)$:
In a \nbh $\U$ of a given point $x$ we can take a section $\fff$ of a trivial Hermitian bundle such that
$\fff$ generates $\J$ and define the \emph{Segre numbers} 
$$
e_k(\J,\mu, x):=\mult_x (M_k^{\fff}\w\mu), \quad k=\kappa, \ldots, \dim
\mu, 
$$
where $\kappa$ is as in Remark \ref{snigel}. 
In view of Theorem~\ref{putig}, these numbers are independent of the choice 
of \nbh  $\U$ and of section $\fff$, and only depend on the class of $\mu$ in $\B(X)$.   
If $\mu=\1_X$, then $e_k(\J,\mu,x)$ coincides with $e_k(\J, X, x)$,
see \cite[Theorem~1.1]{aswy}.

\subsection{Regular embeddings and Gysin mappings}\label{guleboj}

Assume now that $X$ is smooth and that $\J\to X$ is locally a complete intersection
of codimension $\kappa$. This means that  
$\iota\colon Z_\J\hookrightarrow X$ is a {\it regular embedding}, where 
$Z_\J$ is the non-reduced space of codimension $\kappa$ defined by $\J$.
Then the normal cone $N_\J X$ is a vector bundle over the reduced space
$i\colon Z\hookrightarrow X$ and hence there is a well-defined cohomology class
$c(N_\J X)$ on $Z$. 
Therefore there is a well-defined mapping, the classical  
\emph{Gysin mapping}\footnote{Since this is a map to $\A_{k-\kappa}(Z)$, to be formally correct,
we must insert $i_*$ in the formula defining $\iota^!$, cf.\ Section~\ref{byggnad}.}
\begin{equation}\label{agysin}
\iota^!\colon \A_k(X)\to \A_{k-\kappa}(Z),\quad 
i_*\iota^!\mu = \big(c(N_\J X) \cap s(\J, \mu)\big)_{k-\kappa},
\end{equation}
where the lower index $k-\kappa$ denotes the component of dimension $k-\kappa$.
We have the analogous $\B$-\emph{Gysin mapping} 
\begin{equation}\label{bgysin}
\iota^!\colon\B_k(X)\to \B_{k-\kappa}(Z),\quad 
i_*\iota^!\mu = \big(c(N_\J X)\w S(\J, \mu)\big)_{k-\kappa}.
\end{equation}
Our main interest is when $\J$ defines a submanifold; in this case $Z=Z_\J$
and $i=\iota$. 

By suitable choices we can represent \eqref{bgysin} by a mapping on $\GZ(X)$:
Assume that $\J$ is defined by a section $\fff$ of a Hermitian
vector bundle $E\to X$ and let
$E'$ be the pull-back to $Z$. There is a canonical holomorphic embedding
$\varphi\colon N_\J X\to E'$,  
see \cite[Section~7]{aeswy1}. 
Let us equip
$N_\J X$ with the induced Hermitian metric and let $\hat c(N_\J X)$ be 
the associated Chern form, cf.~Section~\ref{seg}. Then we have the concrete mapping 
\begin{equation*}
\iota^!\colon\GZ_k(X)\to \GZ_{k-\kappa}(Z),\quad 
i_*\iota^!\mu =\big(\hat c(N_\J X)\w M^\fff\w \mu\big)_{k-\kappa}
\end{equation*}
which induces the mapping  \eqref{bgysin}.
We recall \cite[Propositions~1.4 and 1.5]{aeswy1}:
\begin{prop}\label{gata1}
If $\J\to X$ defines a regular embedding, then
$$
S(\J,X)=s(N_\J X)\w[Z_\J], \quad S_k(\J,X)=s_{k-\kappa}(N_\J X)\w[Z_\J]
$$
in $\B(X)$, where $[Z_\J]$ is (the Lelong current of) the fundamental
cycle associated to $\J$. 
If $\sigma$ defines $\J$, then 
$$
M^\fff= \hat s(N_\J X)\w[Z_\J], \quad M_k^\fff= \hat s_{k-\kappa}(N_\J X)\w[Z_\J]
$$
in $\GZ(X)$. 
\end{prop}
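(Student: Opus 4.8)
The plan is to prove the two $\GZ(X)$-identities and then deduce the $\B$-statements by passing to classes: by Definition~\ref{skott} (with $\mu=\1_X$) the class $S_k(\J,X)$ is represented by $M^\fff_k$, while $s_{k-\kappa}(N_\J X)$ is the $\B$-class of the Segre form $\hat s_{k-\kappa}(N_\J X)$ and $[Z_\J]$ is a cycle; so $S_k(\J,X)=s_{k-\kappa}(N_\J X)\w[Z_\J]$ follows once $M^\fff_k=\hat s_{k-\kappa}(N_\J X)\w[Z_\J]$ is known in $\GZ(X)$. Since $M^\fff_k$ and $\hat s_{k-\kappa}(N_\J X)\w[Z_\J]$ both have bidegree $(k,k)$, the total and the componentwise identities are equivalent, and it is enough to establish the latter.

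First I would push the computation onto a blowup. Let $\pi\colon\tilde X\to X$ be the blowup along $\J$, an isomorphism off $Z$, with exceptional Cartier divisor $D$ and line bundle $L_D$ carrying a section $\fff^0$ with $\div\fff^0=D$, as in Section~\ref{byggnad}. By the projection formula \eqref{ponny} applied to $\1_X=\pi_*\1_{\tilde X}$ we get $M^\fff=\pi_*M^{\pi^*\fff}$, so it suffices to evaluate $M^{\pi^*\fff}_k=\1_D(dd^c\log|\pi^*\fff|^2)^k$. As $\pi^*\J$ is principal, $\pi^*\fff$ factors as $\pi^*\fff=\fff^0\cdot s$, where $s$ is a nowhere-vanishing section of $\pi^*E\otimes L_D^{-1}$, i.e.\ a line-bundle embedding $L_D\hookrightarrow\pi^*E$. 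Equipping $L_D$ with the metric induced through $s$ we have $|s|\equiv1$, so $\log|\pi^*\fff|^2=\log|\fff^0|^2$ and Poincar\'e--Lelong gives $T:=dd^c\log|\pi^*\fff|^2=[D]+\gamma$ with $\gamma=-\hat c_1(L_D)$ smooth.

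The analytic heart is a Poincar\'e--Lelong computation for Monge--Amp\`ere powers, namely
\[
\1_D\,T^k=[D]\w(\gamma|_D)^{k-1},
\]
with $[D]$ carrying its scheme-theoretic multiplicities. Locally, where $D$ has multiplicity $m$ along a smooth hypersurface $\{w=0\}$, one has $\log|\pi^*\fff|^2=m\log|w|^2+\psi$ with $\psi$ smooth, and in the limit defining $T^k$ (Theorem~\ref{putig}, Proposition~\ref{struts}) the singular factor $dd^c(m\log|w|^2)=m[w=0]$ can contribute to each power at most once, since its square vanishes for bidegree reasons; this produces the multiplicity $m$ linearly and leaves $k-1$ factors of the smooth part $\gamma$. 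Two geometric inputs finish the identification. First, the multiplicity of $D$ along the component of $\Pk(N_\J X)$ over an irreducible $Z_i\subset Z$ equals the multiplicity of $Z_i$ in the fundamental cycle $[Z_\J]$; this is King's formula $\1_Z T^\kappa=[Z_\J]$ in the top degree. Second, $D=\Pk(N_\J X)$ with projection $p\colon\Pk(N_\J X)\to Z$ and $L_D^{-1}|_D=\O_{\Pk(N_\J X)}(1)$, and $s|_D$ realizes the tautological inclusion $\O_{\Pk(N_\J X)}(-1)\hookrightarrow p^*N_\J X\hookrightarrow p^*E'$ coming from $\varphi$; hence the induced metric on $\O_{\Pk(N_\J X)}(1)$ agrees with the one coming from the chosen metric on $N_\J X$, and $\gamma|_D=\hat c_1(\O_{\Pk(N_\J X)}(1))=:\alpha$.

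Granting this, since $\pi|_D=p$ I would conclude
\[
M^\fff_k=\pi_*\big([D]\w\alpha^{k-1}\big)=\hat s_{k-\kappa}(N_\J X)\w[Z_\J],
\]
using $[Z_\J]=\sum_i m_i[Z_i]$ together with $p_*(\alpha^{k-1})=\hat s_{k-\kappa}(N_\J X)$, which is exactly the defining property of the Segre form; summing over $k$ gives $M^\fff=\hat s(N_\J X)\w[Z_\J]$. I expect the main obstacle to be the displayed Monge--Amp\`ere identity together with its metric refinement: one must control the cross terms between the singular and smooth parts of $T$ as $\epsilon\to0$ rigorously, and, for the $\GZ$-level (as opposed to $\B$-level) statement, make sure $\gamma|_D$ is \emph{literally} the Fubini--Study form of the induced metric rather than only a representative of its class. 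The multiplicity bookkeeping, where King's formula does the real work, is the point most likely to require care when $\J$ is nonreduced.
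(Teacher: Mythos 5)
Proposition~\ref{gata1} is not proved in this paper at all: it is recalled verbatim from \cite[Propositions~1.4 and~1.5]{aeswy1}, so there is no internal proof to compare against, only the citation. Your outline is correct and is essentially the argument of the cited source: blow up along $\J$, factor $\pi^*\fff=\fff^0\cdot s$, equip $L_D$ with the metric induced by $s$ so that $dd^c\log|\pi^*\fff|^2=[D]-\hat c_1(L_D)$, use the principal-ideal Monge--Amp\`ere identity $\1_{|D|}\big(dd^c\log|\pi^*\fff|^2\big)^k=[D]\w\big(-\hat c_1(L_D)\big)^{k-1}$, and push forward along $D=\P(N_\J X)\to Z$, where $L_D|_D=\O_{\P(N_\J X)}(-1)$ carries the compatible induced metric, to obtain $\hat s_{k-\kappa}(N_\J X)\w[Z_\J]$; the $\B$-statement then follows since the class is independent of the metric. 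The two points you flag as needing care (the rigorous $\epsilon\to 0$ cross-term control, and the literal equality of the metric induced via $s$ with the one induced via $\varphi$ from the chosen metric on $N_\J X$) are precisely the technical content supplied in \cite{aeswy1} and \cite{aswy}, so your proposal contains no strategic gap.
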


\begin{ex}
Let $i\colon Z\to X$ be the inclusion of a smooth submanifold of codimension $\kappa$ and suppose that $\mu\in\GZ_k(X)$ is a smooth
form. Then, in view of Proposition~\ref{gata1},
\begin{equation*}
i_*i^!\mu=\big(\hat c(N_ZX)\wedge\hat s(N_ZX)\wedge [Z]\wedge\mu\big)_{k-\kappa}=
[Z]\wedge\mu.
\end{equation*}
Thus, $i^!\mu=i^*\mu$ is the usual pullback.
\end{ex}

 
\subsection{Intersection with divisors and the Poincar\'e-Lelong formula on a generalized
  cycle}\label{strutta}

See \cite[Section~8]{aeswy1} for proofs of the statements in this subsection. 
Let $h$ be a meromorphic section of a line bundle $L\to X$. We say that
$\div h$ intersects the generalized cycle $\mu$ \emph{properly} if $h$
is generically holomorphic and nonvanishing on the Zariski support  $|\mu_j|$ of each irreducible component $\mu_j$ of $\mu$. 
If $\div h$ and $\mu$ intersect properly there
is a  generalized cycle $\div h\cdot \mu$ 
with Zariski support on $|\div h|\cap |\mu|$ that we call {\it the proper intersection}
of $\div h$ and $\mu$.  

If $\tau\colon W\to X$ such that $\mu=\tau_*\alpha$, where $\alpha$ is a product of components of
Chern or Segre forms, then 
$
\div h\cdot \mu= \tau_*([\div \tau^*h]\w \alpha).
$  
Then $\div h\cdot \mu\sim 0$ if $\mu\sim 0$ so that the intersection has meaning for  $\mu\in\B(Y)$.  
If $h$ is holomorphic, i.e., $\div h$ is effective, then, in a local frame for $L$,
\begin{equation}\label{hare}
\div h\cdot \mu =dd^c(\log|h|_\s^2~\mu)=\lim_{\epsilon\to 0}\big(dd^c
\log(|h|_\s^2+\epsilon)\w\mu\big), 
\end{equation}
where $|h|_\s$ is the norm of the holomorphic function obtained from any fixed local frame for $L$ so that
$dd^c\log |h|_\s$ is well-defined.  
It follows that $\div h\cdot \mu$ is effective if both $\div h$ and $\mu$ are effective. 
In light of \eqref{hare} it is natural to write $\div h\cdot \mu$ as
$[\div h]\w\mu$.

\begin{prop}[The Poincar\'e-Lelong formula on a generalized cycle]\label{myrstack}
Let $h$ be a nontrivial meromorphic section of a Hermitian line
bundle $L\to X$. Assume that $\div h$ intersects $\mu$ properly. 
Then
\begin{equation*}
dd^c(\log |h|^2 \mu)=[\div h]\w\mu-\hat c_1(L)\w\mu.
\end{equation*}
\end{prop}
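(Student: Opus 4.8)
The plan is to reduce to the defining representation of a generalized cycle and then transport the classical Poincar\'e--Lelong formula on a smooth manifold up through a proper map. Both sides of the asserted identity are additive over the decomposition of $\mu$ into irreducible components, since $\ddc$ and $\hat c_1(L)\w(\cdot)$ are linear and the proper intersection is defined termwise; so I would first reduce to the case where $\mu$ is irreducible. For such $\mu$ the criterion recalled in Section~\ref{potta} furnishes a representation $\mu=\tau_*\alpha$ with $\tau\colon W\to X$ proper, $W$ smooth and connected, $\tau(W)=|\mu|$, and $\alpha$ a product of components of Chern (or Segre) forms. The hypothesis that $\div h$ intersects $\mu$ properly says that $h$ is generically holomorphic and nonvanishing on $|\mu|=\tau(W)$; hence $\tau^*h$ is a nontrivial meromorphic section of $\tau^*L\to W$, and $\log|\tau^*h|^2$ is locally integrable on the smooth manifold $W$.

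On $W$ I would invoke the classical Poincar\'e--Lelong formula, valid in the normalization fixed in the paper,
\[
\ddc\log|\tau^*h|^2=[\div\tau^*h]-\hat c_1(\tau^*L).
\]
Since $\alpha$ is a product of Chern/Segre forms it is $d$-closed, so Leibniz gives $\ddc(\log|\tau^*h|^2\,\alpha)=(\ddc\log|\tau^*h|^2)\w\alpha$, and therefore
\[
\ddc\big(\log|\tau^*h|^2\,\alpha\big)=[\div\tau^*h]\w\alpha-\hat c_1(\tau^*L)\w\alpha
\]
on $W$. Using \eqref{sol} to write $\hat c_1(\tau^*L)=\tau^*\hat c_1(L)$, all three currents are ready to be pushed forward.

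Next I would apply $\tau_*$. Proper pushforward commutes with $d$, hence with $\ddc$, so the left-hand side becomes $\ddc\,\tau_*(\log|\tau^*h|^2\,\alpha)=\ddc(\log|h|^2\,\mu)$. On the right, the definition of the proper intersection recorded just before the proposition gives $\tau_*([\div\tau^*h]\w\alpha)=\div h\cdot\mu=[\div h]\w\mu$, while the projection formula \eqref{ingen} applied to $\tau^*\hat c_1(L)\w\alpha$ yields $\tau_*(\tau^*\hat c_1(L)\w\alpha)=\hat c_1(L)\w\tau_*\alpha=\hat c_1(L)\w\mu$. Combining these three identities produces the claimed formula.

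The step I expect to be the main obstacle is the identity $\tau_*(\log|\tau^*h|^2\,\alpha)=\log|h|^2\,\mu$, that is, pinning down the current $\log|h|^2\,\mu$ so that it is compatible with the chosen representation. I would handle this through the regularization in \eqref{hare}: writing $|h|^2=|h|_\s^2 e^{-2\varphi}$ in a local frame with smooth weight $\varphi$, the smooth part $\varphi\,\mu$ pushes forward trivially by \eqref{ingen}, and for the singular part I would verify that $\tau_*\big(\log(|\tau^*h|_\s^2+\epsilon)\,\alpha\big)\to\tau_*\big(\log|\tau^*h|_\s^2\,\alpha\big)$ and identify this limit with $\log|h|_\s^2\,\mu$. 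The local integrability of $\log|\tau^*h|^2$ on $W$, guaranteed precisely by the proper-intersection hypothesis, is exactly what legitimizes these limits and the passage under $\tau_*$.
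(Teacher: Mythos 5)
Your proof is correct and is essentially the paper's own argument: both write $\mu=\tau_*\alpha$ with $\tau\colon W\to X$ proper, use the proper-intersection hypothesis to see that $\tau^*h$ is a nontrivial meromorphic section over $W$, apply the classical Poincar\'e--Lelong formula (or its proof) upstairs, and push forward using \eqref{ingen} together with the identification $\div h\cdot\mu=\tau_*\big([\div \tau^*h]\w\alpha\big)$. The only cosmetic difference is that the paper pins down $\log|h|^2\mu$ and the exchange of limits with $\tau_*$ via the $\lambda$-regularization (analytic continuation), whereas you invoke the $\epsilon$-regularization \eqref{hare}; both serve the same purpose.
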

 
\begin{remark}\label{luminy} 
If $\div h$ does not intersect $\mu$ properly we define  
$
[\div h]\wedge\mu=\sum_j [\div h]\wedge \mu'_j,
$
where $\mu'_j$ are the irreducible components of $\mu$ that $\div h$
intersects properly, see \cite[Section~9]{aeswy1}. 
\end{remark}

\subsection{Mappings into cohomology groups}
In this subsection we assume
that $X$ is projective, in particular compact, cf.~\cite[Section~10]{aeswy1}.
Let $\widehat H^{k,k}(X)$ be the equivalence classes of 
$d$-closed $(k,k)$-currents $\mu$
on $X$ of order zero such that $\mu\sim 0$ if there is a current
$\gamma$ of order zero such that $\mu=d\gamma$. 
If $X$ is smooth there is a natural isomorphism $\widehat H^{n-k,n-k}(X)\to H^{n-k,n-k}(X,\C)$; 
the surjectivity
is clear and the injectivity follows since a closed current of order zero
locally has a potential of order zero. 
If $i\colon X\hookrightarrow  M$ is an embedding into a smooth
manifold $M$ of dimension $N$, then there is a natural mapping
$i_*\colon \widehat H^{n-k,n-k}(X)\to H^{N-k,N-k}(M,\C)$ induced by the push-forward of currents.

There are natural cycle class mappings
\begin{equation}\label{Adef}
A_X\colon \A_k(X)\to \widehat H^{n-k,n-k}(X), \quad k=0,1, \ldots, 
\end{equation}
and,  \cite[Eq.~(10.8)]{aeswy1}, 
\begin{equation*}
A_X (c(E)\cap \mu)=c(E)\w A_X\mu,
\end{equation*}
in $\widehat H(X)$, where the right hand side is represented by the wedge product of a smooth form and a current.  
There are natural mappings   
\begin{equation}\label{Bdef}
B_X\colon \B_k(X)\to \widehat H^{n-k,n-k}(X), \quad k=0,1, \ldots,
\end{equation}
and clearly  $B_X (c(E)\wedge \mu)=c(E)\w B_X\mu$.

\begin{ex}\label{dota} 
Assume that $h$ is a meromorphic section of a Hermitian line bundle
$L\to X$ such that $\div h$ intersects $\mu\in\GZ_k(X)$ properly.  It follows from
Proposition~\ref{myrstack} that $[\div h]\w\mu$ and $\hat c_1(L)\w\mu$ coincide in 
$\widehat H^{n-k+1,n-k+1}(X)$. 
\end{ex}

Let us recall, \cite[Proposition~1.6]{aeswy1}, that the images of $\A_k(X)$ and $\B_k(X)$
in $\widehat H^{n-k,n-k}(X)$ coincide. We have the
commutative diagram
\begin{equation*}
\begin{array}[c]{ccc}
\Zy_k(X) & {\hookrightarrow} & \B_k(X) \\
\downarrow & &  \downarrow \scriptstyle{B_X} \\
\A_k(X) & \stackrel{A_X}{\longrightarrow} &  \widehat H^{n-k,n-k}(X)
\end{array}. 
\end{equation*}

\begin{ex}\label{lattjo}
It follows from the dimension principle that  $\A_n(X)=\Zy_n(X)=\B_n(X)$.
If $X$ has the irreducible components $X_1, X_2, \ldots$, then 
the image in $\widehat H^{0,0}(X)$ of the cycle
$a_1X_1+a_2X_2+\cdots$ on $X$ is the $d$-closed $(0,0)$-current
$a_1\1_{X_1}+a_2\1_{X_2}+\cdots$.  It follows that the mappings
into $\widehat H^{0,0}(X)$ are injective.
\end{ex}

More generally, we have \cite[Proposition~1.7]{aeswy1}:
 
\begin{prop}\label{ab0}
Assume that $\J\to X$ defines a regular embedding $Z_\J\hookrightarrow X$
of codimension $\kappa$ and let $\mu$ be a cycle.  The images in $\widehat H^{*,*}(Z)$ 
of the Gysin and the $\B$-Gysin mappings of $\mu$, \eqref{agysin} and \eqref{bgysin}, 
coincide.
\end{prop}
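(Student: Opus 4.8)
The plan is to push everything through the cohomology maps $A$ and $B$ of \eqref{Adef} and \eqref{Bdef} and to exploit that both are, by construction, nothing but ``pass to the cohomology class of the representing current.'' Three consequences of this description drive the whole argument. First, $A$ and $B$ agree on honest cycles, which is exactly the commutative diagram preceding Example~\ref{lattjo}. Second, they intertwine the Chern-class actions with cup product, $A_X(c(E)\cap\mu)=c(E)\w A_X\mu$ and $B_X(c(E)\w\mu)=c(E)\w B_X\mu$. Third, since the proper pushforward of $d$-closed order-zero currents descends to $\widehat H^{*,*}$, both $A$ and $B$ commute with $\pi_*$ for a proper $\pi$.

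First I would reduce to a comparison of Segre classes. Working on $Z$, where $N_\J X$ is a genuine vector bundle, \eqref{agysin}, \eqref{bgysin} and property two give
\[
A_Z(\iota^!\mu)=\big(c(N_\J X)\w A_Z(s(\J,\mu))\big)_{k-\kappa},\qquad B_Z(\iota^!\mu)=\big(c(N_\J X)\w B_Z(S(\J,\mu))\big)_{k-\kappa},
\]
where on both sides $c(N_\J X)$ denotes the same cohomology class. Hence it suffices to prove $A_Z\big(s(\J,\mu)\big)=B_Z\big(S(\J,\mu)\big)$ in $\widehat H^{*,*}(Z)$. Both sides are $\Z$-linear in $\mu$, so I may assume $\mu=[V]$ for an irreducible $V\hookrightarrow X$, replace $X$ by $V$, and compute the Segre classes of $\J$ restricted to $V$.

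Next I would place both Segre classes on a common modification $\pi\colon\tilde V\to V$ with $\pi^*\J$ principal, exceptional divisor $D$ and line bundle $L_D$. The classical class is $s(\J,[V])=\pi_*\big(\tfrac{1}{1+c_1(L_D)}\cap[D]\big)$, while the $\B$-Segre class of Definition~\ref{skott}, via \eqref{ponny} applied to $\pi$ and the principal case on $\tilde V$, is represented by $\pi_*\big(\tfrac{1}{1+\hat c_1(L_D)}\w[D]\big)$, i.e.\ the ``$\cap\mapsto\w$'' version of the same formula. Applying $A$ and $B$, using that they commute with $\pi_*$ and turn the finite series in $c_1(L_D)$ into cup product with $\tfrac{1}{1+c_1(L_D)}$, I obtain
\[
A_V\big(s(\J,[V])\big)=\pi_*\Big(\tfrac{1}{1+c_1(L_D)}\w A_{\tilde V}[D]\Big),\qquad B_V\big(S(\J,[V])\big)=\pi_*\Big(\tfrac{1}{1+c_1(L_D)}\w B_{\tilde V}[D]\Big).
\]
Since $[D]$ is an honest cycle, property one gives $A_{\tilde V}[D]=B_{\tilde V}[D]$, so the two right-hand sides coincide and the reduced identity, hence the proposition, follows.

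The hard part will be the matching, on the $\B$-side, of the Monge--Amp\`ere definition $S(\J,\mu)=M^\fff\w\hat\mu$ with the blowup expression $\pi_*\big(\tfrac{1}{1+\hat c_1(L_D)}\w[D]\big)$: one must raise the defining section by $\pi$ through \eqref{ponny} and then evaluate $M^{\pi^*\fff}$ in the principal-ideal case, where it is controlled by the Poincar\'e--Lelong formula and Proposition~\ref{gata1}. Once this explicit representation is secured, the rest is the formal bookkeeping above. In particular no transversality or moving-lemma input is needed, because we never compare the classes in $\A$ or $\B$ themselves, only their images in $\widehat H^{*,*}(Z)$, where a cycle and a generalized cycle representing the same current are automatically identified.
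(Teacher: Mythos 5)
First, a caveat: this paper does not actually prove Proposition~\ref{ab0}; it is quoted verbatim from \cite[Proposition~1.7]{aeswy1}, so there is no in-paper proof to measure you against. I can only assess your argument on its own terms, using the facts this paper does record.

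On those terms your outline is essentially correct, and every ingredient you invoke is available here: the commutative diagram before Example~\ref{lattjo} gives $A=B$ on honest cycles; the compatibility of $A_X$ and $B_X$ with the Chern-class actions is stated explicitly, and it reduces the proposition to $A_Z(s(\J,\mu))=B_Z(S(\J,\mu))$; both sides are $\Z$-linear, so you may take $\mu=[V]$ irreducible; and your ``hard part'' does go through with the quoted results: by \eqref{ponny} one has $M^{i^*\fff}=\pi_*\big(M^{\pi^*i^*\fff}\big)$ on $V$, by Theorem~\ref{putig} one may replace $\pi^*i^*\fff$ by the section $\fff^0$ of $L_D$ cutting out $D$ (same ideal sheaf, so the $M$-currents are $\B$-equivalent), and Proposition~\ref{gata1} applied to the codimension-one regular embedding $D\subset\tilde V$ (where $N_{\J_D}\tilde V=L_D|_{D}$, $\hat s=1/(1+\hat c_1(L_D))$) gives the representative $\pi_*\big(\tfrac{1}{1+\hat c_1(L_D)}\w[D]\big)$ of $S(\J,[V])$, which is the ``$\cap\mapsto\w$'' twin of the defining blow-up formula for $s(\J,V)$. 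Proper pushforward of closed order-zero currents does descend to $\widehat H^{*,*}$, so $A$ and $B$ commute with $\pi_*$ as you claim.

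The one point that genuinely needs tightening is \emph{where} the final comparison takes place. The proposition asserts equality in $\widehat H^{*,*}(Z)$, but as written you apply $A_{\tilde V}$, $B_{\tilde V}$ and then $\pi_*$ into $V$, so what you literally obtain is equality in $\widehat H^{*,*}(V)$ (and hence in $\widehat H^{*,*}(X)$). Since the pushforward $\widehat H^{*,*}(Z\cap V)\to\widehat H^{*,*}(V)$ under the inclusion is not known to be injective, this does not formally yield the statement on $Z$ --- contrary to your closing remark, your computation does not take place in $\widehat H^{*,*}(Z)$. The repair is cheap and uses no new ideas: everything in your formulas is supported on $|D|$ --- $\tfrac{1}{1+c_1(L_D)}\cap[D]$ lies in $\A_*(|D|)$, and $\tfrac{1}{1+\hat c_1(L_D)}\w[D]$ is the image under \eqref{linser} of a generalized cycle on $|D|$ --- so run the identical argument with $A_{|D|}$, $B_{|D|}$ and push forward under $\pi|_{|D|}\colon |D|\to Z\cap V\hookrightarrow Z$; the agreement $A_{|D|}[D]=B_{|D|}[D]$ then gives the equality in $\widehat H^{*,*}(Z)$ as required. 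Finally, treat separately the degenerate case where $\J$ vanishes identically on $V$, for which no principalizing modification exists: there $s(\J,V)=[V]$ by definition and $S(\J,[V])=[V]$ by Remark~\ref{snigel}, so the two images agree trivially.
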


\section{Local intersection numbers}\label{segretal}
Let  $Y$ be a smooth manifold, let $\mu_1, \ldots,\mu_r$ be generalized cycles on $Y$ of pure dimensions  
and let 
$d=\dim \mu_1+\cdots+\dim\mu_r$.
Following the ideas of Tworzewski \cite{Twor} we define 
the {\it local intersection numbers} at $x$, cf.~Lemma~\ref{skaldjur} and~Section~\ref{snumbers},  
\begin{equation*}
\epsilon_\ell(\mu_1,\ldots,\mu_r, x):=e_{d-\ell}\big(\J_\Delta,\mu_1\times \cdots \times \mu_r,i(x)\big), \quad \ell=0,1,\ldots, d,
\end{equation*}
where 
$i\colon Y\hookrightarrow Y^r:=Y\times\cdots\times Y$ is the
parametrization $x\mapsto (x,\ldots, x)$ 
of the diagonal $\Delta$ in  $Y^r$ 
and  $\J_\Delta\to  Y^r$ 
is the ideal sheaf that defines
$\Delta$. 
Notice that if $E\to  Y\times\cdots\times Y$ is a Hermitian vector bundle and 
$\fff$ is a section of $E$ that generates $\J_\Delta$, then
$M^\fff\w (\mu_1\times \cdots \times \mu_r)$ is a global generalized cycle such that
\begin{equation}\label{lok}
\epsilon_\ell(\mu_1,\ldots,\mu_r, x)=
\mult_{i(x)} M_{d-\ell}^\fff\w (\mu_1\times \cdots \times \mu_r)
\end{equation}
for 
$\ell\le d$.  
More invariantly we have, cf.~Definition~\ref{skott}, 
\begin{equation}\label{blomma}
\epsilon_\ell(\mu_1,\ldots,\mu_r, x)=
\mult_{i(x)} S_{d-\ell}(\J_\Delta, \mu_1\times \cdots \times \mu_r).
\end{equation}

Given a point $x$, \eqref{lok} holds as soon as $\fff$ defines $\J_\Delta$ in a \nbh of the point $i(x)$
so we can assume that $\fff$ is a section of a trivial bundle. If the $\mu_j$ are cycles, therefore these numbers 
coincide with the local intersection numbers \eqref{locsec} introduced
by Tworzewski in \cite{Twor},  cf.\ Section~\ref{snumbers} and \cite[Section~10]{aswy}.

\begin{remark}\label{tprod}
Tworzewski, \cite{Twor},  proved that there is a unique global cycle $\mu$ 
such that the sum of its multiplicities, of its components of various dimensions,
at each point $x\in V$ coincides with the sum of the local intersection numbers at $x$. 
Since this definition is local, it cannot carry global information.
For instance, the self-intersection, in this sense, of any smooth curve $Z$ in $\P^2$
is just the curve itself, and therefore the B{\'e}zout formula, cf.~\eqref{likhet}, is not satisfied
unless $Z$ is a line.
\end{remark}

\section{The $\B$-St\"uckrad-Vogel class in $\Pr^M$}\label{vogelsec} 
Let $\Pr^M$ be the projectivization of $\C_{x_0,\ldots,x_M}^{M+1}$. 
Let $\eta=(\eta_1,\ldots,\eta_m)$ be a tuple of linear forms on $\C^{M+1}$ in general
position. As usual we identify the $\eta_j$ with sections of the line bundle
 $L=\Ok(1)\to \Pr^M$ and 
$\eta$ with a section of $E:=\oplus_1^mL$. 
Similarly to Section~\ref{strutta} we let $|\eta|_\s$ be the norm of the holomorphic 
tuple obtained from any fixed local frame for $L$ so that
$dd^c\log |\eta|_\s$ is well-defined.
 Let $Z$ be the plane of codimension $m$
 that $\eta$ defines and let $\J\to \P^M$ be the associated radical ideal sheaf.

Let $\mu$ be a fixed generalized cycle in $\Pr^M$ of pure dimension $d$.
For a generic choice of $a=(a_1,\ldots, a_d)\in (\in\Pr^{m-1})^d$, the successive intersections\footnote{We let $\1_Z$ as 
well as $[\div (a_j\cdot \eta)]$ act on the whole current on
its right, i.e., $\1_Z\gamma\w\mu:= \1_Z(\gamma\w\mu)$ etc.}
by divisors,
cf.~Section~\ref{strutta}, in
\begin{equation}\label{badanka}
v_k^{a\cdot\eta}\wedge\mu:= \1_Z[\div(a_k\cdot \eta)]\w\1_{X\setminus Z} [\div(a_{k-1}\cdot \eta)]\cdots \w\1_{X\setminus Z} 
[\div(a_1\cdot \eta)]\w\1_{X\setminus Z}\mu
 \end{equation}
for $k=0,\ldots,d$  are proper, 
and
\begin{equation}\label{godis}
v^{a\cdot\eta}\wedge\mu=\sum_{k=0}^d v_k^{a\cdot\eta}\wedge\mu
\end{equation}
is the resulting  \emph{St\"uckrad-Vogel (SV) cycle},  cf.\
\cite[Section~9]{aeswy1}.  

\begin{prop}
If we take the mean value of  \eqref{godis} over $(\P^{m-1})^d$, with respect to normalized Haar measure,
then we get the generalized cycle
\begin{equation}\label{helig}
M^{L,\eta}\w\mu:={\bf 1}_Z\mu +{\bf 1}_Zdd^c\log|\eta|^2_\s\w\mu+\cdots +
{\bf 1}_Z(dd^c\log|\eta|^2_\s)^d\w\mu.
\end{equation}
\end{prop}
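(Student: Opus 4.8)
The plan is to prove the identity component by component and to reduce everything to a single mean-value computation for one divisor, which is then iterated. Since $v_k^{a\cdot\eta}\w\mu$ has dimension $d-k$ and depends only on $a_1,\dots,a_k$, while the $k$-th term of \eqref{helig} also has dimension $d-k$, it suffices to prove for each $k$ that $\int_{(\Pr^{m-1})^k} v_k^{a\cdot\eta}\w\mu\,d\theta(a_1)\cdots d\theta(a_k)=\1_Z(\ddc\log|\eta|^2_\s)^k\w\mu$, the integrations in the remaining variables $a_{k+1},\dots,a_d$ contributing only the factor $1$ since $d\theta$ is normalized.

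First I would establish the basic mean value: for any generalized cycle $\nu$ none of whose components lie in $Z$ (so that $\div(a\cdot\eta)$ meets $\nu$ properly for generic $a$),
\[
\int_{\Pr^{m-1}}[\div(a\cdot\eta)]\w\nu\,d\theta(a)=\ddc(\log|\eta|^2_\s\,\nu)=\ddc\log|\eta|^2_\s\w\nu .
\]
By \eqref{hare} the proper intersection is $\ddc(\log|a\cdot\eta|^2_\s\,\nu)$ in a fixed local frame, so that, crucially, no Chern-form term enters. Pulling $\ddc$, which differentiates in the base variable, out of the integral over $a$ and using the spherical mean value $\int_{\Pr^{m-1}}\log|a\cdot\eta|^2_\s\,d\theta(a)=\log|\eta|^2_\s+c_m$ — valid pointwise off $Z$ by the unitary invariance of $d\theta$, with $c_m$ a universal constant that drops because $\nu$ is closed — converts the right-hand side into $\ddc(\log|\eta|^2_\s\,\nu)$, i.e.\ the first Monge--Amp\`ere product of the frame-norm potential.

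I also need a collapse lemma: if $\nu$ is supported on $Z$ then $\ddc\log|\eta|^2_\s\w\nu=0$. This is immediate from \eqref{ponny} and \eqref{epsilon}, since for $\nu=\tau_*\alpha$ with $\tau(W)\subset Z$ the pullback $\tau^*\eta$ vanishes identically, so each regularization $\ddc\log(|\tau^*\eta|^2_\s+\epsilon)$ is $\ddc$ of a constant; it is the frame-norm counterpart of the statement in Remark~\ref{snigel}. With these two facts I would run the induction. Writing $\mu^{(0)}=\1_{X\setminus Z}\mu$ and $\mu^{(j)}=\1_{X\setminus Z}[\div(a_j\cdot\eta)]\w\mu^{(j-1)}$, so that $v_k^{a\cdot\eta}\w\mu=\1_Z[\div(a_k\cdot\eta)]\w\mu^{(k-1)}$, I claim $\sigma_j:=\int\mu^{(j)}\,d\theta=\1_{X\setminus Z}(\ddc\log|\eta|^2_\s)^j\w\mu$. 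Integrating out $a_j$ by the mean value and commuting $\1_{X\setminus Z}$ with the average gives $\1_{X\setminus Z}\,\ddc\log|\eta|^2_\s\w\sigma_{j-1}$; splitting $\sigma_{j-1}=(\ddc\log|\eta|^2_\s)^{j-1}\w\mu-\1_Z(\ddc\log|\eta|^2_\s)^{j-1}\w\mu$, the $\1_Z$-piece is annihilated by the collapse lemma and the other piece collapses to $(\ddc\log|\eta|^2_\s)^{j}\w\mu$ by the recursive property of these products, proving the claim. The same computation with $\1_Z$ in place of the outer $\1_{X\setminus Z}$ yields $\int v_k\,d\theta=\1_Z\,\ddc\log|\eta|^2_\s\w\sigma_{k-1}=\1_Z(\ddc\log|\eta|^2_\s)^k\w\mu$; summing over $k=0,\dots,d$ gives the proposition.

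The main obstacle is the analytic content hidden in the phrase ``commuting with the average'': the products $\ddc\log|\eta|^2_\s\w(\cdot)$ are defined only as limits $\lim_{\epsilon\to0}\ddc\log(|\eta|^2_\s+\epsilon)\w(\cdot)$, cf.\ \eqref{epsilon}, and one must interchange this limit with the integration in $a$. For fixed $\epsilon>0$ the regularization is a wedge with a smooth form and so commutes trivially with $\int d\theta$ and with $\1_Z,\1_{X\setminus Z}$; thus $\int\ddc\log(|\eta|^2_\s+\epsilon)\w\mu^{(j-1)}(a)\,d\theta=\ddc\log(|\eta|^2_\s+\epsilon)\w\sigma_{j-1}$, whose $\epsilon\to0$ limit exists by the frame-norm analogue of Theorem~\ref{putig}. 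To pass the limit under the integral I would invoke a dominated-convergence principle for currents of order zero: all the $\mu^{(j)}(a)$ are supported in $|\mu|$ and, by the degree (B\'ezout) bounds available in $\Pr^M$, carry mass bounded uniformly in $a$ and $\epsilon$, so that pointwise $a$-a.e.\ convergence together with these uniform bounds justifies the interchange. Verifying the uniform mass bounds and the measurability of $a\mapsto\mu^{(j)}(a)$ is the one genuinely technical point; the rest is the bookkeeping above.
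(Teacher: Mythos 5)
Your argument is, in substance, a from-scratch reconstruction of the result that the paper simply quotes. The paper's own proof is two lines: using the convention of Remark~\ref{luminy} it rewrites \eqref{badanka} as $v_k^{a\cdot\eta}\w\mu=\1_Z[\div(a_k\cdot \eta)]\w[\div(a_{k-1}\cdot \eta)]\w\cdots\w[\div(a_1\cdot \eta)]\w\mu$ (the factors $\1_{X\setminus Z}$ are absorbed because components with support in $Z$ are never properly intersected and hence are discarded by that convention), and then invokes \cite[Proposition~9.3]{aeswy1}, which is exactly the mean-value statement. What you do instead is reprove the content of that citation: the spherical mean value $\int_{\P^{m-1}}\log|a\cdot\eta|^2_\s\,d\theta(a)=\log|\eta|^2_\s+c_m$ (with $c_m$ killed by closedness), a collapse lemma for generalized cycles supported in $Z$, and an induction over the successive divisors. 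This is a legitimate and genuinely different route: it buys self-containedness, at the price of having to handle the analytic interchanges that \cite{aeswy1} packages once and for all. Your collapse lemma and the commutation of $\1_Z$, $\1_{X\setminus Z}$ with averaging (which, unlike commutation with weak limits, is valid for order-zero currents) are both correct.

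There is, however, one step you assert that is neither proved nor available inside this paper: the identity $\ddc\log|\eta|^2_\s\w\bigl((\ddc\log|\eta|^2_\s)^{j-1}\w\mu\bigr)=(\ddc\log|\eta|^2_\s)^{j}\w\mu$, which you call ``the recursive property of these products'' and on which your inductive step rests. Theorem~\ref{putig} defines the powers by a single simultaneous limit $(\ddc\log(|\fff|^2+\epsilon))^k\w\mu$, and nothing stated in this paper lets you compute them iteratively; the equivalence of the recursive and the simultaneous definitions is itself a nontrivial result of \cite{aeswy1}, cf.\ also \cite{aswy}, proved by passing to a modification where $\eta$ becomes principal, so that both sides reduce to push-forwards of expressions of the form $[D]\w\omega^{j-1}\w\alpha$. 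So you must either cite this or prove it; as written it is the one real gap. The second soft spot, the dominated-convergence interchange of the $\epsilon$-limit with $\int d\theta$, you flag honestly and your plan works, but note that ``uniform mass bounds from B\'ezout'' are only free when $\mu$ is effective; for a general, signed generalized cycle you should instead write $\mu=\sum_j(\tau_j)_*\alpha_j$ and perform the interchange upstairs on the smooth manifolds $W_j$, where $\log|\tau_j^*(a\cdot\eta)|^2$ is locally integrable uniformly in $a$ and the estimates are classical.
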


\begin{proof}  With the convention in Remark~\ref{luminy} we can write 
$$
v_k^{a\cdot\eta}\wedge\mu=\1_Z[\div(a_k\cdot \eta)]\w [\div(a_{k-1}\cdot \eta)]\w\cdots \w 
[\div(a_1\cdot \eta)]\w\mu.
$$
Now the proposition follows from \cite[Proposition~9.3]{aeswy1}.
\end{proof}
%

By \cite[Proposition~9.5]{aeswy1}, the class of $M^{L,\eta}\w\mu$ in $\B(\P^M)$ only depends on 
$\J$, $L$, and $\mu$ and not on the choice of generators $\eta$. 

\begin{df}
For $\mu\in\B(\P^M)$, we let $V(\J,L,\mu)$, the $\B$-\emph{SV-class} of $L$ and $\J$ on $\mu$, 
be the class of $M^{L,\eta}\w\mu$ in $\B(\P^M)$.
\end{df}

Notice that $M^{L,\eta}\w\mu$ has support in $Z\cap |\mu|$ so that we may identify $V(\J,L,\mu)$
with an element in $\B(Z\cap |\mu|)$,
cf.~\cite[Definition~9.6]{aeswy1}.

\smallskip

Let $\U\subset\P^M$ be an open set where we have a local frame $e$ for $L$. For
instance, each nontrivial section of $L$ vanishes on a hyperplane $H$ 
and thus gives rise to a local frame in the open set $\Pr^M\setminus H$. 
In $\U$ we have that 
\begin{equation}\label{ecuador}
M^{L,\eta}\wedge\mu=M^\eta\wedge\mu
\end{equation}
with the metric on $L|_\U$ such that $|e|=1$, cf.\ \cite[Remark~8.2]{aeswy1}. 
It follows that local statements that hold for $M^\eta\wedge\mu$ must
hold for $M^{L,\eta}\wedge\mu$ as well. In particular, if $\eta$
defines the maximal ideal at $x\in \P^M$, then, in view of
\eqref{plutt}, 
\begin{equation}\label{skrutt}
M^{L,\eta}\wedge\mu =\mult_x \mu \cdot [x]. 
\end{equation}
%
By  \eqref{epsilon} and \eqref{ecuador}, in $\U$ we have the regularization
\begin{equation}\label{epsilon2}
M_k^{L,\eta}\w\mu=\lim_{\epsilon\to 0}\frac{\epsilon}{(|\eta|^2_\s+\epsilon)^{k+1}}
(dd^c|\eta|^2_\s)^k \w\mu, \quad k=0,1,2,\ldots.
\end{equation}
In particular, $M_k^{L,\eta}\w\mu$ is
effective if $\mu$ is; indeed $dd^c|\eta|^2_\s$ is a positive
$(1,1)$-form.

We have the Fubini-Study norm
$|\xi|=\|\xi\| /\| x\|$ on $L=\mathcal{O}(1)$,  where $\|\cdot \|$ denotes the Euclidean norm on $\C^{M+1}_x$.
 
\begin{prop}\label{struts2}
With the norm above $M^{L,\eta}\w\mu$ is the value at $\lambda=0$ of the current valued function 
\begin{equation}\label{lambdasond}
\lambda\mapsto \Big(1-|\eta|^{2\lambda} +\sum_{k\ge 1}\frac{\dbar|\eta|^{2\lambda}\w\partial |\eta|^2}{2\pi i|\eta|^2}
\w (dd^c\log|\eta|^2_\s)^{k-1}\Big)\w\mu,
\end{equation}
a~priori defined when  $\Re\lambda\gg 0$. 
\end{prop}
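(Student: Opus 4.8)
The plan is to reduce everything to the local statement Proposition~\ref{struts} by means of the identity \eqref{ecuador}, and then to check that the two norms occurring in \eqref{lambdasond} — the Fubini--Study norm $|\eta|$ in the factors $|\eta|^{2\lambda}$ and $\partial|\eta|^2/|\eta|^2$, and the frame norm $|\eta|_\s$ in the factors $dd^c\log|\eta|^2_\s$ — may be interchanged without affecting the value at $\lambda=0$. Both the existence of the analytic continuation and the value at $\lambda=0$ of the current-valued function \eqref{lambdasond} are local, since they can be tested against smooth forms supported in a fixed chart; hence I would fix $x\in\P^M$, an open $\U\ni x$ carrying a holomorphic frame $e$ for $L$, and the smooth function $\phi$ on $\U$ determined by $|\eta|^2=e^{\phi}|\eta|^2_\s$. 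By \eqref{ecuador}, on $\U$ we have $M^{L,\eta}\w\mu=M^{\eta}\w\mu$ computed with the metric for which $|e|=1$, i.e.\ with the frame norm $|\eta|_\s$; applying Proposition~\ref{struts} to the section $\eta$ of $E|_\U$ with this metric expresses $M^\eta\w\mu$ as the value at $\lambda=0$ of the ``all-$\s$'' expression obtained from \eqref{lambdasond} by replacing every $|\eta|$ by $|\eta|_\s$, and guarantees that this value is finite.

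It then remains to compare \eqref{lambdasond} with this all-$\s$ expression on $\U$. The $k=0$ terms agree: $(1-|\eta|^{2\lambda})\w\mu$ and $(1-|\eta|_\s^{2\lambda})\w\mu$ both take the intrinsic value $\1_Z\mu$ at $\lambda=0$, since $|\eta|^{2\lambda}=e^{\lambda\phi}|\eta|_\s^{2\lambda}$ and $e^{\lambda\phi}\to 1$ as $\lambda\to 0$. For $k\ge 1$, I would write each summand as $\lambda|\eta|_\s^{2\lambda}e^{\lambda\phi}(\dbar\log|\eta|^2_\s+\dbar\phi)\w\tfrac{1}{2\pi i}(\partial\log|\eta|^2_\s+\partial\phi)\w(dd^c\log|\eta|^2_\s)^{k-1}\w\mu$, using $\dbar|\eta|^{2\lambda}=\lambda|\eta|^{2\lambda}\dbar\log|\eta|^2$, $\partial|\eta|^2/|\eta|^2=\partial\log|\eta|^2$, and $\log|\eta|^2=\log|\eta|^2_\s+\phi$. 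Expanding the product of the two middle factors gives four terms. The ``balanced'' term, carrying $\dbar\log|\eta|^2_\s\w\partial\log|\eta|^2_\s$, is exactly the corresponding summand of the all-$\s$ expression times the smooth factor $e^{\lambda\phi}$; since that summand has a finite value at $\lambda=0$ and $e^{\lambda\phi}\to 1$, the balanced term contributes precisely the all-$\s$ value. Thus the proposition reduces to showing that the three remaining ``cross'' terms, in which at least one of the singular factors $\dbar\log|\eta|^2_\s,\ \partial\log|\eta|^2_\s$ is replaced by the smooth form $\dbar\phi$ or $\partial\phi$, have value $0$ at $\lambda=0$.

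This last point is where the real work lies, and is the step I expect to be the main obstacle. Each cross term carries an explicit factor $\lambda$, so it suffices to show that the accompanying current-valued function is holomorphic at $\lambda=0$. Using Theorem~\ref{putig} and \eqref{ponny} I would first pass to a representation $\mu=\tau_*\alpha$ and work on the smooth $W$, then take a log-resolution $\pi$ of $\tau^*\J$ so that $\pi^*\eta$ generates a principal ideal $(s_0)$, with $s_0$ a monomial cutting out a simple normal crossings divisor $D$. After pullback the singular factors become $\partial\log|s_0|^2+(\text{smooth})$ and $dd^c\log|s_0|^2+(\text{smooth})$, where $dd^c\log|s_0|^2$ is the $\lambda$-independent integration current on $D$; the only mechanism producing a pole at $\lambda=0$ of a current $|s_0|^{2\lambda}(\cdots)$ is a balanced pair $\dbar|s_0|^{2\lambda}\w\partial\log|s_0|^2$ (equivalently $\dbar\log|s_0|^2\w\partial|s_0|^{2\lambda}$) yielding a residue on $D$, whereas $|s_0|^{2\lambda}\,\partial\log|s_0|^2$ alone is holomorphic at $\lambda=0$. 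In each of the three cross terms at least one of these two conjugate singular factors has been replaced by a smooth form, so no balanced pair occurs and the relevant $|s_0|^{2\lambda}(\cdots)$ extends holomorphically across $\lambda=0$; multiplying by $\lambda$ then gives $0$ there. Pushing forward by $\pi_*$ and $\tau_*$, which commute with taking the value at $\lambda=0$ by \eqref{ponny}, the cross terms vanish on $\U$. Since $x$ and $\U$ were arbitrary, the value at $\lambda=0$ of \eqref{lambdasond} equals that of the all-$\s$ expression, namely $M^{L,\eta}\w\mu$, which proves the proposition.
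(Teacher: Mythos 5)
Your proof is correct and is essentially the paper's own argument: both reduce via \eqref{ecuador} to Proposition~\ref{struts} applied with the frame norm $|\eta|_\s$, and then invoke that the value at $\lambda=0$ is unchanged when the norm on $L$ is switched. The only difference is that the paper delegates this norm-independence to a citation (the proof of Lemma~2.1 in \cite{ABullSci}), while you carry out that verification explicitly --- your expansion into a balanced term plus $\lambda$-weighted cross terms, which vanish at $\lambda=0$ after resolution of singularities, is precisely the argument behind the cited lemma.
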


\begin{proof} 
The statement follows directly from Proposition~\ref{struts} in a 
set where we have a local frame for $L$ if we replace 
each occurrence of $|\eta|$ in \eqref{lambdasond} by
$|\eta|_\s$. However one can verify, cf.\ \cite[proof of Lemma~2.1]{ABullSci}, that
the value at $\lambda=0$ is independent of the choice of norm on $L$, and thus 
the proposition follows.
\end{proof}

Notice that the Fubini-Study form  $\hat\omega=dd^c\log|x|^2_\s=dd^c\log\|x\|^2$ represents
the first Chern class $\omega=c_1(L)$. 
We have van~Gastel's formulas for generalized cycles, \cite[Theorem~9.7]{aeswy1}, 
\begin{equation}\label{gastel12}
M^{L,\eta}\w \mu=\sum_{j\ge 0}\Big(\frac{1}{1-\hat\omega}\Big)^j\w M^{\eta}_j\w\mu
\end{equation}
and
\begin{equation}\label{gastel22}
M^{\eta}\w\mu=\sum_{j\ge 0}\Big(\frac{1}{1+\hat\omega}\Big)^j\w M^{L,\eta}_j  \w\mu.
\end{equation}
From \cite[Proposition~9.12]{aeswy1} we get, cf. \eqref{degdef}, 

\begin{prop}\label{skottradd}
Assume that  $\mu\in\GZ_d(X)$. 
We have the mass formula
\begin{equation}\label{B{\'e}zout2}
\deg \mu=\deg M^{L,\eta}_0\w\mu+\cdots +\deg  M^{L,\eta}_d\w\mu +
\deg \big(\ett_{X\setminus Z}(dd^c\log|\eta|_\s^2)^d\w\mu\big).
\end{equation}
\end{prop}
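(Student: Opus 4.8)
The plan is to reduce the mass formula to a telescoping identity for the St\"uckrad--Vogel procedure \eqref{badanka}, in which each individual divisor intersection preserves degree by the Poincar\'e--Lelong formula, and then to pass to the mean value over $a$. Throughout, $X=\Pr^M$ is compact and $\hat\omega=dd^c\log|x|_\s^2$ represents $\omega=c_1(L)$, $L=\Ok(1)$.

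First I would fix a generic $a=(a_1,\ldots,a_d)\in(\Pr^{m-1})^d$ so that all successive intersections in \eqref{badanka} are proper. Set $v_0:=\1_Z\mu$, $R_0:=\1_{X\setminus Z}\mu$, and for $k=1,\ldots,d$
\begin{equation*}
v_k:=\1_Z[\div(a_k\cdot\eta)]\w R_{k-1},\qquad R_k:=\1_{X\setminus Z}[\div(a_k\cdot\eta)]\w R_{k-1},
\end{equation*}
so that $v_k=v_k^{a\cdot\eta}\w\mu$, $\dim R_k=\dim v_k=d-k$, and, splitting $\1=\1_Z+\1_{X\setminus Z}$,
\begin{equation*}
[\div(a_k\cdot\eta)]\w R_{k-1}=v_k+R_k,\qquad \mu=v_0+R_0.
\end{equation*}
The key step is that each divisor intersection preserves degree. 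Indeed, $a_k\cdot\eta$ is a holomorphic section of $L$ whose divisor meets the component $R_{k-1}\in\GZ_{d-k+1}(X)$ properly, so Example~\ref{dota} (a consequence of Proposition~\ref{myrstack}) gives that $[\div(a_k\cdot\eta)]\w R_{k-1}$ and $\hat c_1(L)\w R_{k-1}=\hat\omega\w R_{k-1}$ coincide in $\widehat H^{*,*}(X)$. Their difference is $d\gamma$ for a current $\gamma$ of order zero; wedging with the closed form $\hat\omega^{d-k}$ and applying Stokes on the compact $X$ gives, by \eqref{degdef},
\begin{equation*}
\deg\big([\div(a_k\cdot\eta)]\w R_{k-1}\big)=\int_X\hat\omega^{d-k}\w\hat\omega\w R_{k-1}=\int_X\hat\omega^{d-k+1}\w R_{k-1}=\deg R_{k-1}.
\end{equation*}
With the splitting this reads $\deg v_k+\deg R_k=\deg R_{k-1}$ for $k\ge1$, while $\deg\mu=\deg v_0+\deg R_0$. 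Telescoping over $k=0,\ldots,d$ yields, for the chosen generic $a$,
\begin{equation*}
\deg\mu=\sum_{k=0}^{d}\deg\big(v_k^{a\cdot\eta}\w\mu\big)+\deg R_d^{a\cdot\eta},
\end{equation*}
where $R_d^{a\cdot\eta}=\1_{X\setminus Z}[\div(a_d\cdot\eta)]\w\cdots\w\1_{X\setminus Z}[\div(a_1\cdot\eta)]\w\1_{X\setminus Z}\mu$ is the fully residual, dimension-zero part.

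The remaining step is to average over $a$. The left-hand side is independent of $a$, so I would integrate this scalar identity against normalized Haar measure on $(\Pr^{m-1})^d$. By the mean-value computation behind \eqref{helig}, that is, \cite[Proposition~9.3]{aeswy1}, the average of $v_k^{a\cdot\eta}\w\mu$ is $M^{L,\eta}_k\w\mu$, and the same Crofton-type averaging applied to the restricted divisors gives that the average of $R_d^{a\cdot\eta}$ is $\1_{X\setminus Z}(dd^c\log|\eta|_\s^2)^d\w\mu$. Since $\deg$ is the pairing with the fixed closed form $\hat\omega^{\dim}$, it commutes with the current-valued averaging by Fubini, and one obtains exactly the asserted mass formula.

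The main obstacle I anticipate lies in the interface between the per-step degree invariance and the averaging: one must know that for generic $a$ all intersections are proper (so that Example~\ref{dota} applies verbatim at each step) and that averaging the \emph{restricted} residual currents $R_d^{a\cdot\eta}$ reproduces the $\1_{X\setminus Z}$-Monge--Amp\`ere term, which is the precise content of \cite[Proposition~9.3]{aeswy1}. Granting these two facts, the telescoping and the interchange of $\deg$ with the mean value are routine.
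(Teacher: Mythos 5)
Your argument for a fixed generic $a$ is correct and is fully supported by results stated in the paper: for generic $a$ all successive intersections in \eqref{badanka} are proper, so Proposition~\ref{myrstack} with the Fubini--Study metric (so that $\hat c_1(L)=\hat\omega$) gives $[\div(a_k\cdot\eta)]\w R_{k-1}-\hat\omega\w R_{k-1}=dd^c\big(\log|a_k\cdot\eta|^2R_{k-1}\big)$, and pairing with the closed smooth form $\hat\omega^{d-k}$ on the compact $\Pr^M$ kills this exact term by Stokes; hence $\deg v_k+\deg R_k=\deg R_{k-1}$, and your telescoped identity $\deg\mu=\sum_{k=0}^d\deg\big(v_k^{a\cdot\eta}\w\mu\big)+\deg R_d^{a\cdot\eta}$ holds for each such $a$. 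Since the paper itself offers no proof of the proposition (it is imported wholesale from \cite[Proposition~9.12]{aeswy1}), this part of your derivation is genuinely more informative than what is written there.

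The gap is the treatment of the residual term. The only averaging statement available --- the proposition containing \eqref{helig}, which is precisely what \cite[Proposition~9.3]{aeswy1} is invoked for in this paper --- concerns the $\1_Z$-parts only: the mean value of $\sum_k v_k^{a\cdot\eta}\w\mu$ is $\sum_k M_k^{L,\eta}\w\mu$. Your further claim, that the mean value of $R_d^{a\cdot\eta}$ is $\ett_{X\setminus Z}(dd^c\log|\eta|_\s^2)^d\w\mu$, is not contained in that statement and does not follow from it formally: the restriction $\1_{X\setminus Z}$ and the Monge--Amp\`ere limits of Theorem~\ref{putig} do not commute with integration over $a$ for free, so this identity requires its own induction over the St\"uckrad--Vogel steps (a Crofton formula at each stage, an exchange of the regularized limits with the $a$-average, and the recursion expressing $(dd^c\log|\eta|_\s^2)^{k}\w\mu$ as $dd^c\log|\eta|_\s^2$ applied to the part of the previous step lying outside $Z$ --- none of which appear in this paper). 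Moreover, given your telescoping and the averaging of the $v_k$, this residual identity is equivalent, at the level of degrees, to the very mass formula being proved, so it cannot simply be waved through: either \cite[Proposition~9.3]{aeswy1} literally contains the residual statement --- which is more than what this paper quotes it for and cannot be verified from the text here --- or this step is where the entire content of the proposition resides and must be supplied.
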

 

\noindent If $m\le d$, then the last term in \eqref{B{\'e}zout2}  vanishes since
$(dd^c\log|\eta|_\s^2)^m=0$ outside $Z$.  

\smallskip
For future reference we also point out the following invariance result.
Assume that $i\colon \Pr^M\to\Pr^{M'}$ is a linear embedding of $\Pr^M$ in $\Pr^{M'}$.  
Let $p\colon \Pr^{M'} \dashrightarrow\Pr^M$ 
be a projective (generically defined) projection, i.e., induced by an affine projection $\C^{M'+1}\to\C^{M+1}$,
so that $p\circ i$ is the identity on $\Pr^M$.
Then $p^*\eta_j$ are well-defined linear forms on $\Pr^{M'}$. Let $\eta'$ be some additional linear forms
on $\Pr^{M'}$ that vanish on $i(\Pr^{M})$.  

\begin{prop}\label{pucko}
If $\mu\in\GZ(\Pr^M)$, then 
$$
M^{L, (p^*\eta,\eta')}\w i_*\mu=i_*(M^{L,\eta}\w\mu).
$$
\end{prop}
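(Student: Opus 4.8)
The plan is to reduce the stated global identity to the pushforward formula \eqref{ponny} for the Monge--Amp\`ere product, after identifying the relevant sections under pullback by $i$.

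First I would record the purely algebraic input. Write $\fff=(p^*\eta,\eta')$ for the section of $E':=\oplus L$, a direct sum of copies of $L=\Ok(1)$ over $\Pr^{M'}$ (one summand for each linear form), defining an ideal sheaf with zero set $Z'$. Since $i$ is a linear embedding, $i^*\Ok_{\Pr^{M'}}(1)=\Ok_{\Pr^M}(1)$, so $i^*\fff$ is a section of $\oplus L$ over $\Pr^M$. Because $p\circ i=\mathrm{id}$ on $\Pr^M$ we have $i^*p^*\eta_j=(p\circ i)^*\eta_j=\eta_j$, and because $\eta'$ vanishes on $i(\Pr^M)$ we have $i^*\eta'=0$; hence $i^*\fff=(\eta,0)$ and $i^{-1}(Z')=Z$, where $Z$ is the zero set of $\eta$. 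Since the extra zero components affect neither the norm nor the zero set, $|i^*\fff|^2=|\eta|^2$ and therefore $M^{(\eta,0)}\w\mu=M^\eta\w\mu$ for every $\mu\in\GZ(\Pr^M)$.

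Both sides of the asserted identity have support in $i(\Pr^M)$, so it suffices to verify it locally in open sets $\U'\subset\Pr^{M'}$ over which $L$ admits a frame $e'$, say complements of hyperplanes. I would choose $e'$ so that $e:=i^*e'$ frames $L$ over $\U:=i^{-1}(\U')$, and normalize the metric so that $|e'|=1$, whence $|e|=|i^*e'|=1$. Then \eqref{ecuador} turns the $\B$-SV products into affine ones: $M^{L,\fff}\w i_*\mu=M^\fff\w i_*\mu$ on $\U'$ and $M^{L,\eta}\w\mu=M^\eta\w\mu$ on $\U$. Now $i\colon\U\to\U'$ is an isomorphism onto the closed subset $i(\Pr^M)\cap\U'$ of $\U'$, hence a proper (closed) embedding, so \eqref{ponny} applies in $\U'$ and gives $M^\fff\w i_*\mu=i_*\big(M^{i^*\fff}\w\mu\big)=i_*\big(M^{(\eta,0)}\w\mu\big)=i_*\big(M^\eta\w\mu\big)$. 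The cutoffs $\1_{Z'}$ built into the definition \eqref{helig} pass through $i_*$ by \eqref{linalg} together with $i^{-1}(Z')=Z$. Combining these chains of equalities yields $M^{L,\fff}\w i_*\mu=i_*(M^{L,\eta}\w\mu)$ on $\U'$, and patching over a cover of $i(\Pr^M)$ gives the global identity.

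The main obstacle is the bookkeeping needed to pass between the intrinsic $\B$-SV object $M^{L,\eta}\w\mu$ --- defined through the Fubini--Study norm and the SV procedure with the cutoff $\1_Z$ --- and the affine product $M^\fff\w\mu$ for which the clean pushforward formula \eqref{ponny} holds; this is exactly what \eqref{ecuador} and the compatible normalizations $|e'|=1$, $|e|=1$ resolve, once one checks that $i\colon\U\to\U'$ is genuinely proper so that \eqref{ponny} is applicable. As an alternative that avoids frames entirely, one can argue globally from Proposition~\ref{struts2}: the identities $i^*|\fff|^2=|\eta|^2$ and the projection formula \eqref{ingen} show that, for $\Re\lambda\gg 0$, the current-valued function \eqref{lambdasond} formed from $\fff$ and wedged with $i_*\mu$ equals $i_*$ of the corresponding function formed from $\eta$ and wedged with $\mu$; since $i_*$ is continuous it commutes with analytic continuation in $\lambda$, and evaluating at $\lambda=0$ gives the claim.
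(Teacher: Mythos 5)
Your proof is correct and takes essentially the same route as the paper, whose own argument notes that $\eta'$ vanishes on the Zariski support of $i_*\mu$ and then combines the projection formula \eqref{ingen} with Proposition~\ref{struts2} or with the regularization \eqref{epsilon2}, using $\eta=i^*p^*\eta$ --- which is precisely what your appeal to \eqref{ponny} together with the frame bookkeeping \eqref{ecuador}, and your closing alternative, package. One minor caveat: in the alternative argument the identity $i^*|\fff|^2=|\eta|^2$ of Fubini--Study norms need not hold literally (a general linear embedding need not be induced by an isometry), so there one should instead invoke the independence of the value at $\lambda=0$ of the choice of norm on $L$, noted in the proof of Proposition~\ref{struts2}; your frame-based main argument, where the pullback of the metric normalized by $|e'|=1$ is exactly the metric normalized by $|e|=1$, avoids this issue entirely.
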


\begin{proof} 
Since  $\eta'=0$ on the Zariski support of $i_*\mu$,
$M^{L,(p^*\eta,\eta')}\w i_*\mu= M^{L,(p^*\eta,0)}\w i_*\mu$.
Now the proposition follows from \eqref{ingen} and Proposition~\ref{struts2}, or \eqref{epsilon2}, 
since $\eta=i^*p^*\eta$. 
%
\end{proof}

\section{$\B$-intersection products on manifolds}\label{intprod}
Assume that $\mu_1, \ldots, \mu_r$ are cycles on a complex
manifold $Y$ of dimension $n$ as in the introduction. It is well-known that if they intersect properly, then, 
see, e.g.,  \cite[Chapter~12]{Ch}, one can define the wedge product
$[\mu_1]\wedge\cdots\wedge[\mu_r]$ by means of appropriate
regularizations, see, e.g., \cite[Chapter~III.3]{Dem2}, and this current coincides with
(the Lelong current of) the proper intersection cycle 
$\mu_1\cdot_Y\cdots\, \cdot_Y\mu_r$, see, e.g., \cite[page~212]{Ch}.
It is easy to see that the cycle  $\mu=\mu_1\times\cdots \times \mu_r$ and the diagonal $\Delta$ in 
$Y^r=Y\times \cdots \times Y$ 
intersect properly, and one can prove that if we identify $\Delta$ and $Y$, then 
the proper intersection $\Delta\cdot_{Y^r} \mu$ coincides with $\mu_1\cdot_Y\cdots \, \cdot_Y\mu_r$.
If the $\mu_j$ do not intersect properly the basic idea is to define the intersection of $\Delta$
and $\mu_1\times\cdots\times\mu_r$, 
cf.\ Section~\ref{segretal}.
The advantage then is that one of the factors is a regular embedding.

\smallskip
We now recall the classical nonproper intersection product. 
%
If $\iota\colon Z_\J\to Y$ is a regular embedding of codimension $\kappa$ and $\mu\in \A_k(Y)$, 
then we have, cf.\ \eqref{agysin}, the  product
\begin{equation}\label{Gysinprod}
Z_\J\diamond_{Y}\mu=\iota^!\mu,    
\end{equation}
see, e.g., \cite[Chapter~6.1]{Fult} for background and motivation. 
Let 
\begin{equation}\label{valkyria}
i\colon Y\hookrightarrow Y^r, \quad x\mapsto (x,\ldots, x),
\end{equation} 
be the  diagonal $\Delta$; notice that this is a
regular embedding. 
Given arbitrary cycles $\mu_1,\ldots, \mu_r$, we define the intersection product
\begin{equation*}
\mu_1\cdot_Y\cdots\cdot_Y\mu_r:=i^! (\mu_1\times\cdots\times\mu_r),   
\end{equation*} 
see, e.g., \cite[Chapter~8.1]{Fult}. After identification of $Y$ and $\Delta$
we have $\mu_1\cdot_Y\cdots\cdot_Y\mu_r=\Delta\diamond_{Y^r} (\mu_1\times\cdots\times\mu_r)$.
In case $\mu_1=Z_\J$  is a regular embedding and $\mu_2$ is an
arbitrary cycle, then $\mu_1\cdot_Y \mu_2 = \mu_1\diamond_Y \mu_2$ coincide, see \cite[Corollary~8.1.1]{Fult}.

\smallskip 
We will define analogues for $\B(Y)$, cf.\ Definition~\ref{skott},
Lemma~\ref{skaldjur}, and \eqref{hopper}. 
 
 
\begin{df}\label{strunta}
Assume that $\iota\colon Z_\J\to Y$ is a regular embedding.
For $\mu\in\B(Y)$ we define, cf.\ \eqref{bgysin}, 
the product
\begin{equation*}
Z_\J  \diamond_{\B(Y)}\mu=\iota^! \mu.   
\end{equation*}
\end{df}

Notice that if $Z_\J$ has codimension $\kappa$ and $\mu\in\B_k(Y)$,
then $Z_\J  \diamond_{\B(Y)}\mu\in\B_{k-\kappa}(Z)$; recall that $Z$ is the zero set of $\J$.
Moreover, the Zariski support of $Z_\J  \diamond_{\B(Y)}\mu$ is contained in $Z\cap |\mu|$ and so we can 
identify $Z_\J  \diamond_{\B(Y)}\mu$ with an element in $\B_{k-\kappa}(Z\cap|\mu|)$.

\begin{remark}
If $\J$ is the radical ideal of a submanifold or a reduced locally complete intersection $i\colon Z\hookrightarrow Y$
of codimension $\kappa$ and $\mu$ is a $k$-cycle in $Y$ intersecting $Z$ properly,
then $i_*(Z  \diamond_{\B(Y)}\mu)$ is the proper intersection
$[Z]\wedge \mu$.
%
In fact, in view of Definition~\ref{skott} and
Proposition~\ref{gata1}, 
\begin{equation*}
S(\J, \mu) = i_*S(i^* \J, \mu)=
i_* \big (s(N_{i*\J}\mu)\wedge [Z_{i^*\J}]\big ) 
=
s(N_\J Y)\wedge i_* [Z_{i^*\J}]=  s(N_\J Y)\wedge [Z]\wedge \mu. 
\end{equation*}
Thus, by \eqref{bgysin}, 
\[i_*(Z  \diamond_{\B(Y)}\mu)=i_*i^!\mu=
\big(c(N_\J Y)\wedge S(\J,\mu)\big)_{k-\kappa}=
\big(c(N_\J Y)\wedge s(N_\J Y) \big)_{0}\wedge [Z]\wedge \mu=
[Z]\wedge \mu.
\]
\end{remark}


\begin{df} 
If $\mu_1,\ldots,\mu_r$ are elements in $\B(Y)$, we define 
 \begin{equation*}
\mu_1\cdot_{\B(Y)}\cdots\cdot_{\B(Y)}\mu_r: = i^! (\mu_1\times\cdots\times\mu_r).
\end{equation*}
 \end{df}
 
As above, notice that after identification of $Y$ and $\Delta$ we have 
$\mu_1\cdot_{\B(Y)}\cdots\cdot_{\B(Y)}\mu_r = \Delta\diamond_{\B(Y^r)} \mu_1\times\cdots\times\mu_r$

\begin{remark}\label{sodavatten}
Let $p\colon Y^r\to Y$ be the projection on one of the factors. Then $p\circ i=id$, hence
$p_* i_*=id$ and thus
$
\mu_1\cdot_{\B(Y)}\cdots\cdot_{\B(Y)}\mu_r=p_*(\Delta\diamond_{\B(Y^r)} \mu_1\times\cdots\times\mu_r).
$
\end{remark}

Assume that $\mu_1$ is a regular embedding.  Contrary to the classical intersection product case it is {\it not} true in general that 
$\mu_1\diamond_{\B(Y)}\mu_2$ and $\mu_1\cdot_{\B(Y)}\mu_2$ coincide.    
Example~\ref{cusp2} below shows that the $\B$-self-intersection of the cusp $\mu=\{x_1^3-x_0x_2^2=0\}\subset  \Pr^2$
is different from $\mu\diamond_{\B(Y)} \mu$. This example also shows that the $\B$-analogue of the classical
self-intersection formula  does not hold in general. However, it is true for smooth cycles.

\begin{prop}[Self-intersection formula]\label{sparrow}
Let $V\hookrightarrow Y$ be a smooth subvariety of $Y$ of codimension $m$.  Then
\begin{equation}\label{sparv3}
V\cdot_{\B(Y)} V=c_m(N_V Y)\w [V]. 
\end{equation}
\end{prop}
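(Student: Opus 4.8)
The plan is to unwind the definition $V\cdot_{\B(Y)}V=i^!(V\times V)$, where $i\colon Y\hookrightarrow Y\times Y$ is the diagonal embedding (the case $r=2$), and to exploit that the diagonal ideal, restricted to the smooth product $V\times V$, again defines a regular embedding. Write $\J_\Delta\to Y\times Y$ for the ideal of $\Delta$; it is a regular embedding of codimension $n$ whose normal bundle is $N_{\J_\Delta}(Y\times Y)\cong TY$ under the identification $\Delta\cong Y$. Set $j\colon V\times V\hookrightarrow Y\times Y$ and note that $V\times V=j_*[V\times V]$ is a class of pure dimension $2(n-m)$, so $i^!(V\times V)$ has dimension $2(n-m)-n=n-2m$, matching the dimension of $c_m(N_VY)\w[V]$.

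First I would compute the $\B$-Segre class $S(\J_\Delta,V\times V)$. By the functoriality of the Monge-Amp\`ere product and of $\1_Z$, i.e.\ \eqref{ponny} and \eqref{linalg} applied to a section $\fff$ defining $\J_\Delta$, one has $S(\J_\Delta,j_*[V\times V])=j_*S(j^*\J_\Delta,[V\times V])$. The geometric heart is that $j^*\J_\Delta$ is the \emph{radical} ideal of $\Delta\cap(V\times V)=\Delta_V$, the diagonal of $V$: in coordinates on $Y$ adapted to $V$ the generators $z_i-w_i$ of $\J_\Delta$ lying along $V\times V$ cut out $\Delta_V$, exhibiting $\Delta_V\hookrightarrow V\times V$ as a regular embedding of codimension $\dim V=n-m$ with $N_{\Delta_V}(V\times V)\cong TV$. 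Hence Proposition~\ref{gata1} gives
\begin{equation*}
S(j^*\J_\Delta,[V\times V])=s\big(N_{\Delta_V}(V\times V)\big)\w[\Delta_V]=s(TV)\w[\Delta_V].
\end{equation*}

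Next I would substitute into the $\B$-Gysin formula \eqref{bgysin} with $\kappa=n$ and extract the component of dimension $n-2m$. Identifying $\Delta\cong Y$ and $\Delta_V\cong V$, and noting $N_{\J_\Delta}(Y\times Y)|_{\Delta_V}\cong TY|_V$, the smooth Chern factor restricts to the support $V$ and one obtains
\begin{equation*}
V\cdot_{\B(Y)}V=\big(c(TY|_V)\w s(TV)\w[V]\big)_{n-2m}.
\end{equation*}
It remains to simplify the coefficient at the level of $\B$-classes. The short exact sequence $0\to TV\to TY|_V\to N_VY\to 0$ together with \eqref{trump} gives $c(TY|_V)\sim c(TV)\cdot c(N_VY)$ after wedging with $[V]$, while $\hat c(TV)\w\hat s(TV)=1$ by \eqref{smal}. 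Thus $c(TY|_V)\w s(TV)\sim c(N_VY)$, and the dimension $n-2m$ part of $c(N_VY)\w[V]$ is precisely $c_m(N_VY)\w[V]$, which is \eqref{sparv3}.

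The step I expect to be the main obstacle is the bookkeeping underlying the second paragraph: checking that $j^*\J_\Delta$ is reduced, so that Proposition~\ref{gata1} applies to the reduced diagonal $\Delta_V$, and that the two normal-bundle identifications $N_{\Delta_V}(V\times V)\cong TV$ and $N_{\J_\Delta}(Y\times Y)|_V\cong TY|_V$ are compatible with the inclusion $TV\hookrightarrow TY|_V$ arising from $0\to TV\to TY|_V\to N_VY\to 0$. Only once these identifications are made coherent is the Chern--Segre cancellation in the last step legitimate; the algebra itself is then immediate.
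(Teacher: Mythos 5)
Your proposal is correct and follows essentially the same route as the paper's own proof: identify $j^*\J_{\Delta_Y}=\J_{\Delta_V}$ on $V\times V$, use smoothness of $V$ so that $N_{\Delta_V}(V\times V)=T\Delta_V$ and Proposition~\ref{gata1} gives $S(\J_{\Delta_V},V\times V)=s(TV)\w[\Delta_V]$, and then cancel via $c(TY)|_V\w s(TV)\sim c(N_VY)$ using \eqref{smal} and \eqref{trump}. The only difference is expository: you spell out the push-forward compatibility $S(\J_\Delta,j_*[V\times V])=j_*S(j^*\J_\Delta,[V\times V])$ via \eqref{ponny} and the coordinate check that $j^*\J_\Delta$ is radical, steps the paper uses implicitly.
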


\begin{proof}
Notice that the diagonal $\Delta_Y$ is smooth in $Y\times Y$
and that $N_{\Delta_Y}(Y\times Y)=T{\Delta_Y}$.
If $j\colon V\times V\to Y\times Y$ is the product embedding, then
$
j^*\J_{\Delta_Y}=\J_{\Delta_{V}}.
$
Therefore $i_*(V\cdot_{\B(Y)} V)=\Delta_Y\diamond_{\B(Y\times Y)} V\times V$ 
is the component of dimension $n-2m$ of
$$
c\big(N_{\Delta_Y}(Y\times Y)\big )\w S(j^*\J_{\Delta_Y},V\times V)=
c(T\Delta_Y)\w S(\J_{\Delta_{V}},V\times V)=c(T\Delta_Y)\w
s(T\Delta_V)\w [\Delta_V],
$$
where the last equality follows from Proposition~\ref{gata1}
and, since $V$ is smooth, that $N_{\Delta_V}(V\times V)=T\Delta_V$.
Via the natural isomorphisms $Y\simeq{\Delta_Y}$ and $V\simeq \Delta_V$ thus $V\cdot_{\B(Y)} V$
is the component of dimension $n-2m$ of
 $$
c(TY)|_V\w s(TV)\w [V]=c(TY)|_V\w \frac{1}{c(TV)}\w [V]=
c(TY/TV)|_V\w[V]= c(N_V Y)\w [V], 
$$ 
cf.\ \eqref{smal}. 
Thus we get \eqref{sparv3}. 
\end{proof}

\begin{ex} 
Let $E$ be the exceptional divisor of the blow-up $Y=Bl_a\P^2\to \P^2$
at a point 
$a\in \P^2$.  
Let $L_E\to Y$ be the line bundle with a section that defines
$E$. It follows from \eqref{sparv3} that 
$
E\cdot_{\B(Y)} E= c_1(L_E)\w[E].
$
Since $-c_1(L_E)$ is positive $E\cdot_{\B(Y)} E$ is negative,
which is expected in view of the classical self-intersection of $E$.
\end{ex}

\noindent We have always coincidence of the various intersection
products on cohomology level;
recall the mappings \eqref{Adef} and \eqref{Bdef}.

 \begin{prop}\label{stupa}
Assume that $\mu_1, \ldots, \mu_r$ are cycles in $Y$ and let
$V=|\mu_1|\cap\cdots\cap|\mu_r|$.  Then \begin{equation}\label{sparv1}
A_V\big(\mu_1\cdot_{Y} \cdots\cdot_Y\mu_r\big)=B_V  \big(\mu_1\cdot_{\B(Y)} \cdots\cdot_{\B(Y)}\mu_r\big)
\end{equation}
in $\widehat H(V)$.
Moreover, if  $r=2$ and $\mu_1$ is a regular embedding, then
\begin{equation}\label{sparv2}
B_V \big(\mu_1\diamond_{\B(Y)} \mu_2\big)=B_V\big(\mu_1\cdot_{\B(Y)} \mu_2\big). 
\end{equation}
\end{prop}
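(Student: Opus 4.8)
The plan is to reduce both identities to Proposition~\ref{ab0}, which asserts that for a regular embedding and a cycle the classical Gysin image \eqref{agysin} and the $\B$-Gysin image \eqref{bgysin} have the same class in $\widehat H^{*,*}$. The only further ingredients are the defining formulas for the two intersection products through the diagonal and, for \eqref{sparv2}, the classical coincidence $\mu_1\cdot_Y\mu_2=\mu_1\diamond_Y\mu_2$ recalled above.

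For \eqref{sparv1} I first observe that $\nu:=\mu_1\times\cdots\times\mu_r$ is an ordinary cycle on $Y^r$ and that the diagonal $i\colon Y\hookrightarrow Y^r$ is a regular embedding with defining ideal $\J_\Delta$. By definition $\mu_1\cdot_Y\cdots\cdot_Y\mu_r$ and $\mu_1\cdot_{\B(Y)}\cdots\cdot_{\B(Y)}\mu_r$ are the images of $\nu$ under the classical and the $\B$-Gysin maps of $i$; under the identification $Y\simeq\Delta$ both are supported on $\Delta\cap|\nu|\simeq V$. Applying Proposition~\ref{ab0} with $X=Y^r$, $Z_\J=\Delta$ and the cycle $\nu$ then gives $A_V(\mu_1\cdot_Y\cdots\cdot_Y\mu_r)=B_V(\mu_1\cdot_{\B(Y)}\cdots\cdot_{\B(Y)}\mu_r)$, which is \eqref{sparv1}.

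For \eqref{sparv2} I take $r=2$ and chain three equalities. By \eqref{sparv1}, $B_V(\mu_1\cdot_{\B(Y)}\mu_2)=A_V(\mu_1\cdot_Y\mu_2)$. Since $\mu_1=Z_\J$ is a regular embedding, the classical identity recalled above gives $\mu_1\cdot_Y\mu_2=\mu_1\diamond_Y\mu_2=\iota^!\mu_2$ (classical Gysin) for $\iota\colon|\mu_1|\hookrightarrow Y$, so $A_V(\mu_1\cdot_Y\mu_2)=A_V(\iota^!\mu_2)$. A second application of Proposition~\ref{ab0}, now to $\iota$ and the cycle $\mu_2$, identifies this with the class of the $\B$-Gysin image, i.e.\ $A_V(\iota^!\mu_2)=B_V(\iota^!\mu_2)=B_V(\mu_1\diamond_{\B(Y)}\mu_2)$ by Definition~\ref{strunta}. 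Combining the three equalities yields \eqref{sparv2}.

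The step requiring the most care is the passage from $\widehat H^{*,*}(Z)$ to $\widehat H^{*,*}(V)$: Proposition~\ref{ab0} is phrased as a coincidence on $Z$, whereas both Gysin images actually have Zariski support on $Z\cap|\mu|$, which equals $V$ in each of the two applications (see Definition~\ref{skott} and Remark~\ref{snigel} for the support of $S(\J,\mu)$, and likewise for $s(\J,\mu)$). Thus one must check that the order-zero current exhibiting the difference of the two images as $d$-exact can be taken with support in $Z\cap|\mu|$; this is precisely what the proof of Proposition~\ref{ab0} delivers, since there the Segre class $s(\J,\mu)$ and the $\B$-Segre class $S(\J,\mu)$ are compared by currents supported on $Z\cap|\mu|$. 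Once this support bookkeeping is in place the coincidence descends to $\widehat H^{*,*}(V)$ and no further computation is needed.
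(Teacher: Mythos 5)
Your proof is correct and follows essentially the same route as the paper: \eqref{sparv1} is obtained by applying Proposition~\ref{ab0} to the diagonal embedding of $Y$ in $Y^r$ and the product cycle $\mu_1\times\cdots\times\mu_r$, and \eqref{sparv2} by chaining \eqref{sparv1}, the classical coincidence $\mu_1\cdot_Y\mu_2=\mu_1\diamond_Y\mu_2$ for a regular embedding, and a second application of Proposition~\ref{ab0}. Your closing remark on descending from $\widehat H^{*,*}(Z)$ to $\widehat H^{*,*}(V)$ addresses a point the paper leaves implicit, and it does not alter the argument.
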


\begin{proof}
The equality \eqref{sparv1} follows directly from the definitions and Proposition~\ref{ab0}.
Since the two possible definitions of $ \mu_1\cdot_{Y} \mu_2$ coincide when
$\mu_1$ is a regular embedding, \eqref{sparv2} follows by another
application of Proposition~\ref{ab0}. 
\end{proof}

 

\begin{prop} \label{gastuba}
(i)  \  If $\mu_1, \ldots, \mu_r$ are cycles in $Y$ that intersect properly, then
 \begin{equation}\label{sugga6}
 \mu_1\cdot_{\B(Y)}\cdots\cdot_{\B(Y)} \mu_r=\mu_1\cdot_Y\cdots\cdot_Y\mu_r.
 \end{equation}
%
%

\smallskip
\noindent (ii)   If $h$ is a holomorphic section of $L\to Y$ such that $\div h$ intersects $\mu\in\B(Y)$ properly, then
\begin{equation}\label{tryffel}
\div h \diamond_{\B(Y)}\mu=\div h\cdot \mu=\div h \cdot_{\B(Y)} \mu.    
\end{equation}
%
\end{prop}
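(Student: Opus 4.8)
The plan is to reduce all three products to the honest proper intersection $\div h\cdot\mu$ of Section~\ref{strutta}, and to settle part~(i) by the reduction to the diagonal combined with the Remark after Definition~\ref{strunta}. For part~(i), set $\nu=\mu_1\times\cdots\times\mu_r$, which is an ordinary cycle by Lemma~\ref{skaldjur}. A dimension count shows that the $\mu_j$ intersect properly in $Y$ exactly when $\nu$ meets the diagonal $\Delta$ properly in $Y^r$; since $\Delta$ is smooth, hence a regular embedding, the Remark after Definition~\ref{strunta} applies to the \emph{cycle} $\nu$ and gives $\Delta\diamond_{\B(Y^r)}\nu=[\Delta]\w\nu$. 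By the classical reduction to the diagonal recalled in Section~\ref{intprod}, $[\Delta]\w\nu$ corresponds under $Y\simeq\Delta$ to $\mu_1\cdot_Y\cdots\cdot_Y\mu_r$. As $\mu_1\cdot_{\B(Y)}\cdots\cdot_{\B(Y)}\mu_r=\Delta\diamond_{\B(Y^r)}\nu$ by definition, this yields \eqref{sugga6}.

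For the first equality in \eqref{tryffel} I would compute the $\B$-Gysin map explicitly on a representative. Write $Z=|\div h|$ and $\J=(h)$, so that $N_\J Y=L|_Z$, and choose $\hat\mu=\sum_j(\tau_j)_*\alpha_j\in\GZ_k(Y)$ representing $\mu$. Proper intersection forces $\tau_j^*h\not\equiv0$ on each contributing $W_j$, so $D_j:=\div\tau_j^*h$ is a divisor and Proposition~\ref{gata1} gives $M^{\tau_j^*h}=\hat s(\tau_j^*L)\w[D_j]=\tau_j^*\hat s(L)\w[D_j]$, the last step by \eqref{sol}. Combining \eqref{ponny}, \eqref{lamm}, and the projection formula \eqref{ingen} then yields
\begin{equation*}
M^h\w\hat\mu=\hat s(L)\w\sum_j(\tau_j)_*(\alpha_j\w[D_j])=\hat s(L)\w(\div h\cdot\mu),
\end{equation*}
the last identity by the description of proper intersection in Section~\ref{strutta}. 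Since $\div h\cdot\mu$ is supported on $Z$ and $\hat c(L)\hat s(L)=1$ by \eqref{smal}, wedging with $\hat c(N_\J Y)=\hat c(L)|_Z$ collapses the Segre factor, so the concrete Gysin map of Section~\ref{guleboj} gives $i_*(\div h\diamond_{\B(Y)}\mu)=\big(\hat c(N_\J Y)\w M^h\w\hat\mu\big)_{k-1}=\div h\cdot\mu$, which is already pure of dimension $k-1$ by properness. Hence $\div h\diamond_{\B(Y)}\mu=\div h\cdot\mu$.

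The remaining equality $\div h\cdot_{\B(Y)}\mu=\div h\cdot\mu$ is, after identifying $Y$ with the diagonal $\Delta\subset Y\times Y$, the assertion $\Delta\diamond_{\B(Y\times Y)}(\div h\times\mu)=[\Delta]\w(\div h\times\mu)$, the right-hand side corresponding to $\div h\cdot\mu$ under $Y\simeq\Delta$ exactly as in part~(i). I would prove it by the same direct-image computation, now with $N_{\J_\Delta}(Y\times Y)=T\Delta\cong TY$ in the role of $L$, so that $\hat c(TY)\hat s(TY)=1$ again collapses the Segre factor. The hard part is precisely here: for the codimension-$n$ diagonal the pulled-back ideal $g^*\J_\Delta$ (where $g$ resolves $\div h\times\mu$) need not define a regular embedding, and Proposition~\ref{gata1} is then unavailable — this is exactly the mechanism behind the failure of $\diamond_{\B(Y)}=\cdot_{\B(Y)}$ noted before Proposition~\ref{sparrow}. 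The point to verify is that proper intersection of $\div h$ and $\mu$ forces each relevant fiber product $W_i\times_Y W_j$ to have the expected codimension $n$, hence to be a regular embedding, after which the computation runs verbatim and delivers $[\Delta]\w(\div h\times\mu)=\div h\cdot\mu$.
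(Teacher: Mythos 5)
Your part (i) is correct, and it takes a genuinely different route from the paper's. You combine the Remark following Definition~\ref{strunta} (applied to the smooth diagonal $\Delta\subset Y^r$ and the cycle $\nu=\mu_1\times\cdots\times\mu_r$) with the classical reduction to the diagonal recalled in Section~\ref{intprod}; the paper instead argues cohomologically: by properness both products lie in $\A_k(V)$ resp.\ $\B_k(V)$ with $k=\dim V$, their images in $\widehat H^{0,0}(V)$ agree by \eqref{sparv1}, and those maps are injective in top dimension by Example~\ref{lattjo}. Your route gives the identification at the level of currents; the paper's is shorter and never needs the diagonal-reduction fact beyond what is quoted. Your proof of the first equality in \eqref{tryffel} is also correct and is in substance the paper's computation: you evaluate $M^h\w\hat\mu=\hat s(L)\w(\div h\cdot\mu)$ via Proposition~\ref{gata1}, \eqref{ponny}, \eqref{lamm}, \eqref{sol} and \eqref{ingen}, where the paper instead notes $M_0^h\w\mu=0$ and quotes \cite[Eq.~(8.4)]{aeswy1} for $M_1^h\w\mu=\div h\cdot\mu$.

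The genuine gap is the last equality, $\div h\cdot_{\B(Y)}\mu=\div h\cdot\mu$, and the ``point to verify'' you isolate is both the wrong object and false in general. First, your reformulation of the goal as $\Delta\diamond_{\B(Y\times Y)}(\div h\times\mu)=[\Delta]\w(\div h\times\mu)$, ``identified with $\div h\cdot\mu$ as in part (i),'' is not available: $\div h\times\mu$ is a generalized cycle, proper intersection against the codimension-$n$ diagonal is defined in this paper only for cycles (Section~\ref{strutta} treats generalized cycles only against divisors), and the identification used in part (i) is a classical fact about cycles. Second, proper intersection of $\div h$ and $\mu$ is a condition on Zariski supports only and does not control the fibers of the resolutions, so it does not force the fiber products $W_i\times_Y W_j$ to have expected codimension. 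For instance, in $\P^3$ let $\div h$ be a quadric cone with vertex $a$, resolved by blowing up $a$ (exceptional set of dimension $1$), and let $\mu=\tau_*\alpha$ with $\tau\colon Bl_a\P^3\to\P^3$ (exceptional set of dimension $2$): then $h\not\equiv 0$ on $|\mu|=\P^3$, so the intersection is proper, yet the fiber product contains the product of the two exceptional sets, of dimension $3$, exceeding the expected dimension $2$; the pulled-back diagonal ideal is therefore not a regular embedding there.

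The paper's proof circumvents exactly this by never resolving $\div h$: it writes $\div h\times\mu=(id\times\tau)_*(\div h\times\alpha)$ and pulls $\J_\Delta$ back to $Y\times W$, where $g=(id\times\tau)^*\fff$ cuts out the graph $G$ of $\tau$ --- a regular embedding automatically, with no hypothesis needed. Properness then enters only through the divisor: $h\otimes 1\not\equiv 0$ on $G$, so the dimension principle gives $M_\ell^g\w[\div(h\otimes 1)]=0$ for $\ell<n$, and $g|_{\div(h\otimes 1)}$ is a regular embedding of codimension $n$, whence $M_n^g\w[\div(h\otimes 1)]=[G]\w[\div(h\otimes 1)]$ by \cite[Corollary~7.5]{aeswy1}. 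In particular the mechanism is not the cancellation $\hat c(TY)\w\hat s(TY)=1$ you propose: all Segre components except the top one vanish, and only $c_0\w S_n$ survives in the Gysin formula. Finally, your sketch omits the closing step altogether: even granted $M_n^g\w[\div(h\otimes 1)]=[G]\w[\div(h\otimes 1)]$, one must still identify $\tau_*\pi_*\big((1\times\alpha)\w[G]\w[\div(h\otimes 1)]\big)$ with $\div h\cdot\mu$; the paper does this via the potential $\log|h|_\s^2$, cf.\ \eqref{hare}, computing $\pi_*\big((1\times\alpha)(\log|h|_\s^2\otimes 1)\w[G]\big)=\log|\tau^*h|_\s^2\,\alpha$ and applying $dd^c$. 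Without these points your argument for the third equality does not go through.
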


\begin{proof} 
Assume that the $\mu_j$ have dimensions $d_j$, respectively.  
The assumption about proper intersection means that the set-theoretic 
intersection $V=|\mu_1|\cap\cdots\cap|\mu_r|$ has the expected
dimension $k:=d_1+\cdots + d_r-(r-1)n$ and that
$ \mu_1\cdot_Y\cdots\cdot_Y\mu_r$ and $\mu_1\cdot_{\B(Y)}\cdots\cdot_{\B(Y)} \mu_r$ are elements
in $\A_k(V)$ and $\B_k(V)$, respectively. Now \eqref{sugga6} follows from 
\eqref{sparv1} and Example~\ref{lattjo}.



Let us now consider part (ii). We may assume that $\mu=\tau_*\alpha$, where $\tau\colon W\to Y$
is proper holomorphic and $\alpha$ is a product of components of Chern or Segre forms, cf.\ \eqref{seedan}.
 The assumption of proper intersection implies that  $h$ is not identically zero on $|\mu|=\tau(W)$ so that 
$M^{h}_0\w\mu=\mathbf{1}_{h=0}\mu=\tau_*\mathbf{1}_{\tau^*h=0}\alpha=0$. Let $\iota$ be the regular embedding 
given by the ideal sheaf $\J_h$ generated by $h$.
We have $N_{\J_h}Y=L|_{h=0}$, cf.\ Section~\ref{guleboj}.
Thus 
\begin{eqnarray*}
\div h\diamond_{\B(Y)}\mu &=& \iota^!\mu=(c(L)\w S(\J_h,\mu))_{\dim\mu-1}=(c(L)\w M^{h}\w\mu)_{\dim\mu-1} \\
&=&
c_0(L)\wedge M_1^h\wedge \mu 
=M_1^h\wedge\mu 
=\div h\cdot \mu;
\end{eqnarray*}
for the last equality, cf.~\cite[Eq.~(8.4)]{aeswy1}.
%

We now consider the last equality in \eqref{tryffel}.
Consider the commutative diagram 
\begin{equation}\label{tapir}
\begin{array}[c]{ccc}
Y\times W  & \stackrel{id\times \tau}{\longrightarrow} & Y\times Y \\
\downarrow \scriptstyle{\pi} & &  \downarrow \scriptstyle{p} \\
W & \stackrel{\tau}{\longrightarrow} &   Y,
\end{array}
\end{equation}
where $p$ is the projection on the first factor. 
By definition, cf.~Remark~\ref{sodavatten}, $\div h \cdot_{\B(Y)}\mu$ 
is $p_*$ of
\begin{eqnarray}\label{spadtag}
\Delta\diamond_{\B(Y\times Y)}(\div h\times\mu) &=&
\big(c(N_\Delta (Y\times Y))\w S(\J_\Delta, \div h\times\mu)\big)_{\dim\mu-1} \\
&=&
\sum_{\ell=0}^n c_{n-\ell}(N_\Delta (Y\times Y))\wedge S_\ell(\J_\Delta, \div h\times\mu). \nonumber
\end{eqnarray}
Recall that $S_\ell(\J_\Delta, \div h\times\mu) = M^\fff_\ell\wedge(\div h\times\mu)$
if $\fff$ is a section that defines $\Delta\subset Y\times Y$.
Now 
$\div h\times\mu=(id\times \tau)_*(\div h\times \alpha)$ so if $g=(id\times\tau)^*\fff$
we have,  cf.~\eqref{ponny} and \eqref{lamm},  
\begin{multline*}
M_\ell^\sigma\w(\div h\times\mu)=(id\times\tau)_* M_\ell^g\w(\div h\times\alpha)=\\
(id\times \tau)_*\big((1\times\alpha)\w M_\ell^g\w(\div h\times 1)\big)=
(id\times \tau)_*\big((1\times\alpha)\w M_\ell^g\w [\div(h\otimes 1)]\big). 
\end{multline*}
Notice that $g$ defines the graph $G$  of $\tau$ in $Y\times W$. Since $\div h$ and $\mu$ intersect properly, $\tau^*h$ is 
generically non-vanishing on $W$ and so $h\otimes 1$ is generically non-vanishing on $G$. Thus, $G$ and $\div (h\otimes 1)$ intersect
properly. The Zariski support of 
$M_\ell^g\w [\div(h\otimes 1)]$ is $G\cap \{h\otimes 1 =0\}$, which thus has dimension $\dim W-1$. Since
$M_\ell^g\w [\div(h\otimes 1)]$ has dimension $\dim W +n-\ell-1$ it follows from the dimension principle that 
$M_\ell^g\w [\div(h\otimes 1)]=0$ for $\ell<n$. Thus, $S_\ell(\J_\Delta, \div h\times\mu)=0$ for $\ell<n$ and
from \eqref{spadtag} we get
\begin{equation}\label{spadtag1}
\Delta\diamond_{\B(Y\times Y)}(\div h\times\mu) =
S_n(\J_\Delta, \div h\times\mu)
=(id\times \tau)_*\big((1\times\alpha)\w M_n^g\w [\div(h\otimes 1)]\big).
\end{equation}

To compute $M_n^g\w [\div(h\otimes 1)]$, notice that $g$ defines a regular embedding in $Y\times W$ of codimension $n$
and that, since $\dim (G\cap \{h\otimes 1 =0\})=\dim W-1$, 
the restriction of $g$ to $\div(h\otimes 1)$ defines a regular embedding in $\div(h\otimes 1)$ of codimension $n$. Thus, by 
\cite[Corollary~7.5]{aeswy1},
\begin{multline}\label{spadtag2}
M_n^g\w [\div(h\otimes 1)] = \big(M^g\wedge [\div(h\otimes 1)]\big)_{\dim W-1} \\
=\big(S(\J_g,Y\times W)\wedge[G]\wedge [\div(h\otimes 1)]\big)_{\dim W-1} \\
=S_0(\J_g,Y\times W)\wedge[G]\wedge [\div(h\otimes 1)]
=[G]\wedge [\div(h\otimes 1)],
\end{multline}
where $\J_g$ is the ideal sheaf generated by $g$.
Since \eqref{tapir} is commutative, \eqref{spadtag1} and \eqref{spadtag2} give
\begin{eqnarray}\label{strutsfarm}
p_*\big(\Delta\diamond_{\B(Y\times Y)}(\div h\times\mu) \big) &=& \tau_*\pi_*\big((1\times\alpha)\w
[\div(h\otimes 1)] \wedge [G]\big) \\
&=&
dd^c\tau_*\pi_*\big( (1\times \alpha)\w (\log|h|_\s^2\otimes 1)\w [G]\big), \nonumber
\end{eqnarray}
cf.~\eqref{hare}. 
Since  
$\pi_*\big((1\times \alpha)\w (\log|h|_\s^2\otimes 1) \w[G]\big)= \log|\tau^*h|_\s^2\alpha$,
by \eqref{strutsfarm} we get, cf.~\eqref{ingen} and \eqref{hare}, 
\begin{equation*}
p_*\big(\Delta\diamond_{\B(Y\times Y)}(\div h\times\mu) \big)
= dd^c\tau_*\big( \log|\tau^*h|_\s^2\alpha)=dd^c(\log|h|^2_\s\mu)=\div h\cdot \mu,
\end{equation*}
finishing the proof.
 \end{proof}

%

\section{The $\bullet$-product on $\P^n$}\label{bulletsection}
 
In this section we define the product \eqref{poker} of generalized cycles on $\P^n$ and prove 
Theorem~\ref{mainthm}. 
%
The first step is to define the join of two generalized cycles. 
For simplicity we first assume that $r=2$. The mapping
\begin{equation}\label{boljig}
\P^{2n+1}_{x,y}\stackrel{\p}{\dashrightarrow} \P^n_x\times\P^n_y, \quad [x,y]\mapsto ([x],[y]).
\end{equation} 
is well-defined outside the union of the two disjoint $n$-dimensional planes $x=0$ and
$y=0$, and it has surjective differential.
If $\mu_1,\mu_2\in\GZ(\P^n)$,  therefore $\p^*(\mu_1\times\mu_2)$ is a well-defined current outside
the indeterminacy set of $\p$.  We will see that
$\p^*(\mu_1\times\mu_2)$ extends in a natural way to a generalized cycle 
$\mu_1\times_J\mu_2$ on $\P^{2n+1}_{x,y}$.

\smallskip
Let $\pi\colon Bl\,\P^{2n+1}_{x,y}\to \P^{2n+1}_{x,y}$ be the blow-up of
$\P^{2n+1}_{x,y}$ along 
$\{x=0\}$ and $\{y=0\}$.   
Then we have 
\begin{equation}\label{svangd}
\xymatrix{
Bl\,\P^{2n+1}_{x,y} \ar[d]_{\pi} \ar[dr]^p & \\
\mathbb{P}^{2n+1}_{x,y} \ar@{-->}[r]_{\frak{p}} & \mathbb{P}^n_x\times \mathbb{P}^n_y,
}
\end{equation}
where  $p:= \p\circ\pi \colon Bl\,\P^{2n+1}_{x,y}\to \P^n_x\times\P^n_y$
has surjective differential and hence is smooth, i.e., maps smooth forms onto smooth forms.

\begin{lma} \label{lagom}
(i) If $\mu\in\GZ(\Pr^n\times\Pr^n)$, then $p^*\mu\in\GZ(Y)$.

(ii)  $\pi_*p^*\mu$ is in $\GZ(\Pr^{2n+1})$
and coincides with $\p^*\mu$ where it is defined.

(iii)  If $\mu=0$ in $\B(\Pr^n\times\Pr^n)$, then 
$\pi_*p^*\mu=0$ in $\B(\Pr^{2n+1})$.
\end{lma}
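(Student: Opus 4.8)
The plan is to reduce everything to the generators of $\GZ$ and to transfer the pullback onto a fiber product over which $p$ becomes a submersion of smooth spaces, so that proper pushforward and the Chern/$B$-form structure are preserved verbatim. Write $\mu=\sum_j(\tau_j)_*\alpha_j$ as in Section~\ref{potta}, with $\tau_j\colon W_j\to\Pr^n\times\Pr^n$ proper, $W_j$ smooth, and $\alpha_j$ a product of components of Chern forms; by $\Z$-linearity of $p^*$ it suffices to treat a single term $\tau_*\alpha$. I form the cartesian square with $\widetilde W:=Y\times_{\Pr^n\times\Pr^n}W$ and projections $\widetilde\tau\colon\widetilde W\to Y$, $\widetilde p\colon\widetilde W\to W$. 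Since $p$ has surjective differential by~\eqref{svangd}, $\widetilde W$ is smooth and $\widetilde p$ is a submersion, while $\widetilde\tau$ is proper by base change; I split $\widetilde W$ into connected components to match the definition of a generalized cycle.

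The engine of the proof is the base-change identity for currents along a submersion, $p^*\tau_*=\widetilde\tau_*\,\widetilde p^*$, which gives $p^*(\tau_*\alpha)=\widetilde\tau_*(\widetilde p^*\alpha)$. By~\eqref{sol} the holomorphic map $\widetilde p$ sends components of Chern forms to components of Chern forms, so $\widetilde p^*\alpha$ is again such a product; hence $\widetilde\tau_*(\widetilde p^*\alpha)\in\GZ(Y)$, and summing proves~(i). For~(ii), $\pi$ is proper and $\GZ$ is stable under proper pushforward (Section~\ref{potta}), so $\pi_*p^*\mu\in\GZ(\Pr^{2n+1})$; over $\U:=\Pr^{2n+1}\setminus(\{x=0\}\cup\{y=0\})$ the blow-up $\pi$ is biholomorphic and $p=\p\circ\pi$, whence $p^*\mu=\pi^*\p^*\mu$ there and $\pi_*p^*\mu=\p^*\mu$ on $\U$.

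Part~(iii) is the main point and follows the same route applied to a generator of the relation $\sim 0$. It suffices to take $\mu=\tau_*(\beta\w\alpha)$, where $\beta$ is a component of a $B$-form associated with a short exact sequence $0\to S\to E\to Q\to 0$ of Hermitian bundles on $W$. Then $p^*\mu=\widetilde\tau_*(\widetilde p^*\beta\w\widetilde p^*\alpha)$, and pulling the sequence back along $\widetilde p$ yields the exact sequence $0\to\widetilde p^*S\to\widetilde p^*E\to\widetilde p^*Q\to 0$, whose associated $B$-form equals $\widetilde p^*$ of the original by~\eqref{sol}; thus $\widetilde p^*\beta$ is a component of a $B$-form and $p^*\mu\sim 0$ in $\B(Y)$. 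Finally $\pi_*p^*\mu=(\pi\circ\widetilde\tau)_*(\widetilde p^*\beta\w\widetilde p^*\alpha)$ with $\pi\circ\widetilde\tau$ proper is again of the generating form for $\sim 0$, so $\pi_*p^*\mu=0$ in $\B(\Pr^{2n+1})$.

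The step I expect to be the main obstacle is justifying the base-change formula $p^*\tau_*=\widetilde\tau_*\widetilde p^*$ at the level of currents: one must verify that $p^*$ (defined since $p$ is a submersion) commutes with the proper pushforward $\tau_*$ across the cartesian square, and that $\widetilde W$ is genuinely smooth. Both rest on the surjectivity of the differential of $p$ recorded in~\eqref{svangd}; granting this, parts~(i) and~(iii) run completely in parallel, the only extra input for~(iii) being that $\widetilde p^*$ preserves the class of $B$-forms, which is immediate from the functoriality~\eqref{sol} of the Chern-form construction.
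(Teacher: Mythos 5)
Your proposal is correct and follows essentially the same route as the paper: the same fibre square over $p$, the same reduction to generators $\tau_*\alpha$, and the same argument for (iii) via pullback of $B$-forms. The base-change identity $p^*\tau_*=\widetilde\tau_*\,\widetilde p^*$ that you invoke and flag as the main obstacle is precisely the paper's claim \eqref{grus}, which the paper discharges by the local observation that the blow-up is biholomorphic to $\U\times\P^1_t$ over small open sets of $\P^n\times\P^n$, so that the fibre product is just $\widetilde W\times\P^1$ and the identity becomes obvious.
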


\begin{proof} 
Note that (ii) is a direct consequence of (i). 

Let $X=\P^n\times\P^n$ and $X'=Bl\,\P^{2n+1}_{x,y}$.
We may assume that  $\mu=\tau_*\alpha$, where
$\tau \colon W\to X$ is proper and  $\alpha$ is  a product of components of Chern forms.
Consider the fibre square
\begin{equation}\label{studs3}
\begin{array}[c]{ccc}
W'  & \stackrel{\rho}{\longrightarrow} & X' \\
\downarrow \scriptstyle{\tilde\pi} & &  \downarrow \scriptstyle{p} \\
W & \stackrel{\tau}{\longrightarrow} &   X.
\end{array}
\end{equation}
Since $p$ is smooth it follows that the fibre product $W'= W\times_{X} Y$ is smooth,
cf.\ \eqref{krumbukt} below.
The pullback  $\tilde\pi^*\alpha$ is a product of Chern forms on $W'$ and thus
$\rho_*\tilde\pi^*\alpha$ is a generalized cycle on $X'$.  We claim that
\begin{equation}\label{grus}
\rho_*\tilde\pi^*\gamma=p^* \tau_*\gamma
\end{equation}
for any smooth form $\gamma$.
Taking \eqref{grus} for granted we conclude that $p^*\mu=p^*
\tau_*\alpha$ is a generalized cycle, which proves (i). 
It is enough to prove \eqref{grus} for all smooth forms $\gamma$ with
small support. Notice  that locally in $X$,
say in a small open set  $\U$,  $X'|_\U$  is biholomorphic to $\U\times \P^1$. 
Let us assume that $\tau_*\gamma$ has support in an open set
$\U\subset X$, where $X'=\U\times\P^1_t$.  Letting $\widetilde W=\tau^{-1}(\U)$,  
by the definition of fiber product,
\begin{equation}\label{krumbukt}
\widetilde W\times_\U (\U\times\P^1_t)=\{(w,x,t);\  \tau(w)=p(x,t)=x\}= \{(w, \tau(w),t);\  w\in \widetilde W\}\simeq \widetilde W\times \P^1
\end{equation}
and $\rho(w,t)=(\tau(w),t)$.   Now \eqref{grus} is obvious. 


To see (iii), note that if $\mu=\tau_*(\beta\w\alpha)$, where $\beta$ is a component of a $B$-form, then 
it follows from \eqref{grus} that
$\pi_*p^*\mu=\pi_*\rho_*(\tilde\pi^*\beta\w\tilde\pi^*\alpha)$
and hence $0$ in $\B(\Pr^{2n+1})$ since $\tilde\pi^*\beta$ is a component of a $B$-form.
\end{proof}

If $\mu_1,\mu_2\in\GZ(\Pr^n)$, then $\mu_1\times \mu_2\in\GZ(\Pr^n\times \Pr^n)$ 
by Lemma~\ref{skaldjur}, 
and by virtue of Lemma~\ref{lagom} we can make the following definition.
\begin{df}
For $\mu_1,\mu_2\in\GZ(\Pr^n)$ we define the join product $\mu_1\times_J\mu_2$ by
$$
\mu_1\times_J\mu_2:=\pi_*p^*(\mu_1\times \mu_2).
$$
\end{df}
It follows from the same lemmas that $ \mu_1\times_J\mu_2\in\GZ(\P^n)$ and, moreover, that $ \mu_1\times_J\mu_2$ is
$0$ in $\B(\P^{2n+1})$ if $\mu_1$ or $\mu_2$ is $0$ in $\B(\P^n)$. Hence,
$\mu_1\times_J\mu_2$ is well-defined for $\mu_j\in\B(\P^n)$.

\begin{ex}[Relation to the classical join]\label{unge}
Assume that $X_1,X_2\subset \P^n$ are (irreducible) analytic
sets. 
Let $\tilde p\colon \C^{n+1}\setminus\{0\}\times\C^{n+1}\setminus\{0\}\to \Pr^n\times \Pr^n$ 
and $\tilde\pi\colon \C^{2n+2}\setminus\{0\}\to\Pr^{2n+1}$ be the natural maps.
Notice that $\tilde X=\tilde p^{-1} (X_1\times X_2)$ is homogeneous in
$\C^{2n+2}$ 
and  $\tilde\pi(\tilde X)$ is the classical join of $X_1$ and $X_2$. 
%
We claim that 
\begin{equation}\label{larmar}
X_1\times_J X_2= \tilde\pi(\tilde X).
\end{equation}
Since $\mathfrak{p}\circ\tilde\pi=\tilde p$ on the common set of definition it follows that \eqref{larmar} holds outside
the union $V\subset \P^{2n+1}$ of planes where $\mathfrak p$ is not
defined. 
To prove \eqref{larmar}  
it is thus enough to show that 
$\1_V \pi_* p^* (X_1\times X_2)$ vanishes. 
In view of \eqref{linalg}, $\1_V \pi_* p^* (X_1\times X_2)=0$ if
$\1_{\pi^{-1}V} p^* (X_1\times X_2)=0$, which may be checked locally in $Bl\,\P^{2n+1}$.
We may therefore consider a subset
$\U\times \P_t^1$ of $Bl\,\P^{2n+1}$, where $\mathcal{U}\subset\P^n\times\P^n$ is open, cf.\ the proof of Lemma~\ref{lagom}. Note
that, in $\U\times \P_t^1$, $\pi^{-1}V$ is of the form $H:=\U\times
\{t_0\}$ and that $p^*(X_1\times X_2)=X_1\times X_2\times\P^1_t$. 
Thus, by the dimension principle, $\1_{\pi^{-1}V} p^* (X_1\times X_2)=\1_H (X_1\times X_2\times \P^1)=0$.
\end{ex}

\begin{ex}\label{nyckelben}
Let $\mu_1,\mu_2\in\B(\P^n)$ and assume that $\Lambda\colon\P^n\to\P^{n'}$ is a linear embedding, i.e., $\Lambda$ is induced by
an injective linear map $\tilde\Lambda\colon\C^{n+1}\to\C^{n'+1}$. Then $\tilde\Lambda\times\tilde\Lambda$ is an
injective linear map $\C^{2n+2}\to\C^{2n'+2}$ and we get a linear embedding $\mathbf{\Lambda} \colon\P^{2n+1}\to\P^{2n'+1}$.
Let $\pi'$ and $p'$ be defined in the same way as $\pi$ and $p$ in \eqref{svangd} with $n$ replaced by $n'$. 
Similarly to the proof of Lemma~\ref{lagom} one shows that $\mathbf{\Lambda}_*\pi_*p^*=\pi'_*(p')^*(\Lambda\times\Lambda)_*$
as operations on currents in $\P^n\times\P^n$. 
It follows that
\begin{equation*}
\mathbf{\Lambda}_* (\mu_1\times_J\mu_2) = \Lambda_*\mu_1 \times_J \Lambda_*\mu_2.
\end{equation*}
\end{ex}

In a similar way as above we have the mapping 
\begin{equation}\label{hunger}
\Pr^{r(n+1)-1}_{x^1,\ldots, x^r}\stackrel{\p}{\dashrightarrow} \Pr^n_{x^1}\times\cdots \times
\Pr^n_{x^r}, \quad [x^1,\ldots, x^r]\mapsto \big ([x^1], \ldots, [x^r]\big ).
\end{equation}
Let now $\pi\colon Bl\,\P^{r(n+1)-1}_{x^1,\ldots, x^r}\to \P^{r(n+1)-1}_{x^1,\ldots, x^r}$
be the blow-up of
$\P^{r(n+1)-1}_{x^1,\ldots, x^r}$ along the codimension $n$-planes  
$\{x^1=0\}, \ldots, \{x^r=0\}$ and set $p:=\mathfrak p \circ \pi$. We get a diagram analogous to
\eqref{svangd}.
As above, given $\mu_1,\ldots, \mu_r$ in $\GZ(\P^n)$ or in $\B(\P^n)$,  
we define $\mu_1\times_J\cdots\times_J\mu_r$ in
$\GZ(\Pr^{r(n+1)-1})$ or in $\B(\Pr^{r(n+1)-1})$, respectively, as $\pi_*p^*(\mu_1\times\cdots \times \mu_r)$.

\begin{prop}\label{gnet} 
If $\mu_1,\ldots, \mu_r\in\GZ(\P^n)$, then 
\begin{equation*}
\deg (\mu_1\times_J\cdots \times_J\mu_r)=\deg \mu_1\cdots\deg \mu_r.
\end{equation*}
\end{prop}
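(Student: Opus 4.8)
The plan is to compute this degree by realizing the resolution $Bl\,\P^{r(n+1)-1}$ as a projective bundle over $(\P^n)^r$ and pushing a power of the hyperplane class down to the base. By multilinearity of $\times_J$ and additivity of $\deg$ over components of different dimension, I would first reduce to the case that each $\mu_j$ has pure dimension $d_j$. Write $d=d_1+\cdots+d_r$, $N=r(n+1)-1$ and $D=d+(r-1)$, so that $\mu_1\times_J\cdots\times_J\mu_r$ has pure dimension $D$. The geometric input is the identification of $Bl\,\P^N$ (the closure of the graph of $\mathfrak p$) with the projective bundle $\Pr(\mathcal E)$ over $(\P^n)^r$, where $\mathcal E=\bigoplus_{j=1}^r\varpi_j^*\O(-1)$ and $\varpi_j\colon(\P^n)^r\to\P^n$ is the $j$-th projection: a line in $\mathcal E_{([u^1],\ldots,[u^r])}=\C u^1\oplus\cdots\oplus\C u^r$ is precisely a point $[\lambda_1u^1,\ldots,\lambda_ru^r]$ of $\P^N$, and under this identification $p$ becomes the bundle projection while $\pi^*\O_{\P^N}(1)=\O_{\Pr(\mathcal E)}(1)$. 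Hence, with $\zeta=\hat c_1(\O_{\Pr(\mathcal E)}(1))$, the forms $\pi^*\hat\omega$ and $\zeta$ are cohomologous.

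Next I would set $T=\mu_1\times\cdots\times\mu_r$, a closed generalized cycle on $(\P^n)^r$ with $\mu_1\times_J\cdots\times_J\mu_r=\pi_*p^*T$. Using \eqref{ingen}, the invariance of the integral of a top-degree current under proper push-forward, and the closedness of $p^*T$ (which lets me replace $\pi^*\hat\omega$ by $\zeta$), I get
\begin{equation*}
\deg(\mu_1\times_J\cdots\times_J\mu_r)=\int_{\P^N}\hat\omega^{D}\w\pi_*p^*T=\int_{\Pr(\mathcal E)}\zeta^{D}\w p^*T.
\end{equation*}
Pushing forward along the smooth proper map $p$ and applying the projection formula again yields $\int_{(\P^n)^r}p_*(\zeta^{D})\w T$, and by the definition of the Segre form in Section~\ref{seg}, $p_*(\zeta^{(r-1)+d})=\hat s_d(\mathcal E)$. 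Choosing the orthogonal direct-sum metric on $\mathcal E$ (the integral being insensitive to this choice since $T$ is closed), the Whitney formula holds at the form level, so
\begin{equation*}
\hat s_d(\mathcal E)=\Big(\prod_{j=1}^r\frac{1}{1-\varpi_j^*\hat\omega}\Big)_d=\sum_{k_1+\cdots+k_r=d}(\varpi_1^*\hat\omega)^{k_1}\w\cdots\w(\varpi_r^*\hat\omega)^{k_r}.
\end{equation*}

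Finally I would substitute this into $\int_{(\P^n)^r}\hat s_d(\mathcal E)\w\varpi_1^*\mu_1\w\cdots\w\varpi_r^*\mu_r$ and use the Fubini identity for external products of order-zero currents, $\int_{(\P^n)^r}\varpi_1^*\beta_1\w\cdots\w\varpi_r^*\beta_r=\prod_{j}\int_{\P^n}\beta_j$, with $\beta_j=\hat\omega^{k_j}\w\mu_j$. For dimension reasons $\int_{\P^n}\hat\omega^{k_j}\w\mu_j$ vanishes unless $k_j=d_j$; since $k_1+\cdots+k_r=d=d_1+\cdots+d_r$, the only surviving term is $k_j=d_j$ for every $j$, and it contributes $\prod_j\int_{\P^n}\hat\omega^{d_j}\w\mu_j=\prod_j\deg\mu_j$ by \eqref{degdef}, which is the claim.

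The step I expect to require the most care is the identification of $Bl\,\P^N$ with $\Pr(\mathcal E)$ together with $\pi^*\O_{\P^N}(1)=\O_{\Pr(\mathcal E)}(1)$, i.e.\ verifying that the resolution used in the construction of $\times_J$ is exactly this split projective bundle, so that the fibre integral of $\zeta^{D}$ is genuinely a Segre form of $\mathcal E$. Once this is in place, the current-theoretic steps — the two projection formulas and the collapse of the external product to a single term — are routine given the material recalled in Sections~\ref{prel} and~\ref{vogelsec}.
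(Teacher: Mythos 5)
Your proof is correct, but it takes a genuinely different route from the paper's. The paper argues cohomologically: after the same reduction to pure dimensions, it chooses potentials $a_j$ with $dd^c a_j=\mu_j-(\deg\mu_j)\hat\omega^{k_j}$, concludes that $\mu_1\times\cdots\times\mu_r$ differs from $(\prod_j\deg\mu_j)\,\hat\omega^{k_1}\times\cdots\times\hat\omega^{k_r}$ by a $dd^c$-exact current, applies $\pi_*p^*$, and is thereby reduced to showing $\deg(\hat\omega^{k_1}\times_J\cdots\times_J\hat\omega^{k_r})=1$; that last point is settled by replacing each $\hat\omega^{k_j}$ by an intersection of generic hyperplanes and observing that $\pi_*p^*$ of a hyperplane coming from one factor $\P^n_{x^j}$ is a hyperplane in $\P^{r(n+1)-1}$ (the join of linear spaces is linear). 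You instead compute the degree by genuine fiber integration, identifying the resolution with the projective bundle $\P(\mathcal{E})$, $\mathcal{E}=\oplus_j\varpi_j^*\O(-1)$, so that the degree becomes $\int\hat s_d(\mathcal{E})\wedge T$ and collapses to $\prod_j\deg\mu_j$ by bidegree bookkeeping. The paper's route buys economy: it needs nothing about the global structure of $Bl\,\P^{r(n+1)-1}$ beyond how hyperplanes behave under $\pi_*p^*$. Your route buys structure: the identification $\pi^*\O_{\P^N}(1)=\O_{\P(\mathcal{E})}(1)$ and the fiber-integral formula $p_*\zeta^{d+r-1}=\hat s_d(\mathcal{E})$, which is essentially the mechanism behind the van Gastel formulas of Section~\ref{vogelsec}. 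Two points you flagged deserve confirmation. First, the identification is correct, but note that for $r\ge 3$ the planes $\{x^j=0\}$ pairwise intersect, so ``the blow-up along the planes'' must be read as the closure of the graph of $\mathfrak{p}$ (equivalently the blow-up of the product ideal $\I_1\cdots\I_r$, since $\mathfrak{p}$ followed by the Segre embedding is given by that linear system); this is in fact the model the paper uses, as it is the one with the local product structure $\U\times\P^{r-1}$ invoked in the proof of Lemma~\ref{lagom}. Second, your parenthetical fix for the Whitney formula is exactly right and genuinely needed: with the paper's definition of Chern and Segre forms of higher-rank bundles, Whitney holds only modulo $dd^c$ of smooth forms (the $B$-forms of Section~\ref{potta}), not on the nose, but such discrepancies integrate to zero against the closed current $T$, which is all your argument requires.
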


\begin{proof}
We may assume that the $\mu_j$ have pure dimension. 
There are  currents $a_j$ in $\P^n$ such that $dd^c a_j=\mu_j-(\deg\mu_j)\hat\omega^{k_j}$
if $\dim \mu_j=n-k_j$, where $\hat\omega$ is the Fubini-Study form on $\P^n$. It follows that there is a current $A$ on $\P^n_{x^1}\times\cdots\times\P^n_{x^r}$  such that
$$
dd^c A=\mu_1\times\cdots\times \mu_r-(\deg\mu_1\cdots\deg\mu_r)\hat\omega^{k_1}\times\cdots\times\hat\omega^{k_r},
$$
cf.\ Lemma \ref{skaldjur}. 
Applying $\pi_*p^*$, it is enough to show that 
$\deg (\hat\omega^{k_1}\times_J\cdots\times_J\hat\omega^{k_r})=1$;
but this is obvious if we just notice that
$\pi_*p^*$ of a hyperplane in $\P^n_{x^1}\times\cdots\times\P^n_{x^r}$
induced by a hyperplane in one of the factors $\Pr^n_{x^j}$ is a hyperplane in
$\Pr^{r(n+1)-1}_{x^1,\ldots, x^r}$
and replace each $\hat\omega^{k_j}$ by the intersection of $k_j$ generic
hyperplanes.
\end{proof}

For the last argument one can also observe that
$\log\big ((|x^1|^2+\cdots + |x^r|^2)/|x^j|^2\big )$ is a well-defined locally integrable function on $\Pr^{r(n+1)-1}_{x^1,\cdots,x^r}$ 
and that
$$
dd^c\log\big ((|x^1|^2+\cdots + |x^r|^2)/|x^j|^2\big )=\omega_{x^1,\cdots,x^r}-\pi_*p^*\omega_{x^j}.
$$

\smallskip

Let 
\begin{equation}\label{jdia}
j\colon \P^n \hookrightarrow \P^{r(n+1)-1}, \quad [x]\mapsto [x,\ldots,x].
\end{equation}
be the parametrization of the join diagonal $\Delta_J$ in
$\P^{r(n+1)-1}$ and let $\J_J$ be the associated sheaf. 
Notice that $\J_J$ is generated by  the $(r-1)(n+1)$ linear forms, i.e., sections of $L=\Ok(1)$,  
\begin{equation}\label{lucia}
\eta=(x_0^2-x_0^1,x_0^3-x_0^2,\ldots,x_0^r-x_0^{r-1}, \ldots, x_n^2-x_n^1,x_n^3-x_n^2,\ldots,x_n^r-x_n^{r-1}).
\end{equation}
Since 
\begin{equation*}
\codim\Delta_J=r(n+1)-1-n=(r-1)(n+1)
\end{equation*}
we see that $\eta$ is a minimal
generating set.

\begin{df}\label{bulletdef}
Given $\mu_1,\ldots, \mu_r\in\B(\P^n)$, 
$\mu_1\bullet \cdots \bullet \mu_r$ is the unique class in $\B(\P^n)$ such that
\begin{equation}\label{rodpenna}
j_*(\mu_1\bullet\cdots \bullet  \mu_r)= V(\J_J, L, \mu_1\times_J
\cdots \times_J \mu_r). 
\end{equation}
\end{df}

Since \eqref{hopper} is injective, $\mu_1\bullet \cdots \bullet \mu_r$ is
well-defined. It is clear that $\mu_1\bullet\cdots \bullet  \mu_r$ is commutative, multilinear, and that its
Zariski support is contained in
$|\mu_1|\cap\cdots\cap |\mu_r|$. 

If $\mu_1,\ldots, \mu_r\in \GZ(\P^n)$ denote representatives of
the corresponding classes in $\B(\P^n)$, then the right hand side of
\eqref{rodpenna} is represented by 
\begin{equation}\label{blapenna}
M^{L,\eta}\w (\mu_1\times_J \cdots \times_J \mu_r) 
\end{equation}
for any choice of $\eta$ generating $\J_J$. 
If the $\mu_j$ have pure dimensions, then
 \begin{equation}\label{apa}
d:=\dim (\mu_1\times_J\cdots \times_J \mu_r)=\sum_1^r\dim\mu_j +r -1,
\end{equation}
and thus 
$j_*(\mu_1\bullet\cdots \bullet  \mu_r)_\ell$ is represented by 
$M_{d-\ell}^{L,\eta}\w (\mu_1\times_J \cdots \times_J \mu_r).$ 

The $\bullet$-product is 
invariant in the following sense.

\begin{prop}\label{pucko2} 
Assume that $\mu_j\in\B(\Pr^n)$ and let $\Lambda\colon \Pr^n\hookrightarrow\Pr^{n'}$ be a linear embedding.
Then
$\Lambda_*(\mu_1\bullet\cdots\bullet\mu_r)=\Lambda_*\mu_1\bullet\cdots\bullet \Lambda_*\mu_r$.
\end{prop}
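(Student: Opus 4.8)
The plan is to reduce the assertion to the two invariance results already established, namely the behaviour of the join under linear embeddings (the evident $r$-fold analogue of Example~\ref{nyckelben}) and the invariance of the $\B$-SV-class under linear embeddings (Proposition~\ref{pucko}), and then to descend from the join diagonal back to $\Pr^{n'}$ using the injectivity \eqref{hopper}. Throughout I would work with fixed $\GZ$-representatives of the classes $\mu_j$ and reduce by multilinearity to the pure-dimensional case.

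First I would set up notation. Let $\tilde\Lambda\colon\C^{n+1}\to\C^{n'+1}$ induce $\Lambda$, and let $\mathbf{\Lambda}\colon\Pr^{r(n+1)-1}\to\Pr^{r(n'+1)-1}$ be the linear embedding induced by $\tilde\Lambda^{\oplus r}$. Write $j\colon\Pr^n\hookrightarrow\Pr^{r(n+1)-1}$ and $j'\colon\Pr^{n'}\hookrightarrow\Pr^{r(n'+1)-1}$ for the two join-diagonal parametrizations \eqref{jdia}, with associated ideal sheaves $\J_J$ and $\J_J'$. A direct check on homogeneous coordinates shows $\mathbf{\Lambda}\circ j=j'\circ\Lambda$, so that $\mathbf{\Lambda}_*j_*=j'_*\Lambda_*$ on currents; and the $r$-fold version of the identity in Example~\ref{nyckelben} gives $\Lambda_*\mu_1\times_J\cdots\times_J\Lambda_*\mu_r=\mathbf{\Lambda}_*(\mu_1\times_J\cdots\times_J\mu_r)$.

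With these in hand the computation is short. By Definition~\ref{bulletdef} and \eqref{rodpenna}, $j'_*(\Lambda_*\mu_1\bullet\cdots\bullet\Lambda_*\mu_r)$ is represented by $M^{L,\eta^{(n')}}\w\mathbf{\Lambda}_*(\mu_1\times_J\cdots\times_J\mu_r)$ for \emph{any} tuple $\eta^{(n')}$ generating $\J_J'$, since the $\B$-SV-class is independent of that choice. The key is to pick $\eta^{(n')}$ of the shape $(p^*\eta,\eta')$ so that Proposition~\ref{pucko} applies with $i=\mathbf{\Lambda}$ and $\nu:=\mu_1\times_J\cdots\times_J\mu_r$; this yields $M^{L,(p^*\eta,\eta')}\w\mathbf{\Lambda}_*\nu=\mathbf{\Lambda}_*(M^{L,\eta}\w\nu)$, whose right-hand side represents $\mathbf{\Lambda}_*j_*(\mu_1\bullet\cdots\bullet\mu_r)=j'_*\Lambda_*(\mu_1\bullet\cdots\bullet\mu_r)$. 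Since $j'_*$ is injective by \eqref{hopper}, the proposition follows.

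The one step demanding care, and hence the main obstacle, is exhibiting these generators. I would choose a linear left inverse $\tilde q\colon\C^{n'+1}\to\C^{n+1}$ of $\tilde\Lambda$ and set $\tilde p=\tilde q^{\oplus r}$, inducing $p\colon\Pr^{r(n'+1)-1}\dashrightarrow\Pr^{r(n+1)-1}$ with $p\circ\mathbf{\Lambda}=\mathrm{id}$, and take for $\eta$ the standard generators \eqref{lucia} of $\J_J$. Then the $p^*\eta$ are linear combinations of the forms $y_i^a-y_i^{a-1}$ that generate $\J_J'$, and to complete them to a generating set one adjoins, for each level $a$, the $n'-n$ forms $\eta'_{s,a}=\bigl(R(y^a-y^{a-1})\bigr)_s$ with $R\colon\C^{n'+1}\to\C^{n'-n}$ chosen so that $\ker R=\mathrm{Im}\,\tilde\Lambda$ and $R$ is injective on $\ker\tilde q$. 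Because $\C^{n'+1}=\mathrm{Im}\,\tilde\Lambda\oplus\ker\tilde q$, such an $R$ exists; the block map $\binom{\tilde q}{R}$ is then invertible, so $(p^*\eta,\eta')$ indeed generates $\J_J'$, while $\ker R=\mathrm{Im}\,\tilde\Lambda$ forces every $\eta'_{s,a}$ to vanish on $\mathbf{\Lambda}(\Pr^{r(n+1)-1})=\Pr\bigl((\mathrm{Im}\,\tilde\Lambda)^r\bigr)$ — exactly the two hypotheses of Proposition~\ref{pucko}. These linear-algebra verifications are cleanest after choosing coordinates in which $\tilde\Lambda$ is the standard inclusion, where $p^*\eta$ and $\eta'$ together are literally the standard generators $y_i^a-y_i^{a-1}$, $i=0,\ldots,n'$, of $\J_J'$.
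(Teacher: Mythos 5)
Your proof is correct, and its skeleton coincides with the paper's: reduce, via the commutation $\mathbf{\Lambda}\circ j=j\circ\Lambda$ and the injectivity of $j_*$ in \eqref{hopper}, to an identity of $\B$-SV-classes on the join, and then conclude with Proposition~\ref{pucko}. Where you genuinely diverge is in the decomposition of the argument. The paper proceeds in two steps: first the case where $\Lambda$ is a linear automorphism (there Proposition~\ref{pucko} is not used at all; one only needs that $\mathbf{\Lambda}^*$ preserves $\J_J$ and sections of $L$ together with the independence of the SV-class of the choice of generators), and then, having normalized the general embedding to the standard inclusion $[x]\mapsto[x:0]$, an application of Proposition~\ref{pucko} with the evident projection and the extra forms $y^{j+1}_k-y^j_k$. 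You instead treat an arbitrary linear embedding in a single stroke by manufacturing abstractly the data Proposition~\ref{pucko} requires: a left inverse $\tilde q$ of $\tilde\Lambda$ inducing $p$ with $p\circ\mathbf{\Lambda}=\mathrm{id}$, and the forms $\eta'_{s,a}=\bigl(R(y^{a+1}-y^a)\bigr)_s$ with $\ker R={\rm Im}\,\tilde\Lambda$; the invertibility of the block map $\binom{\tilde q}{R}$ gives that $(p^*\eta,\eta')$ generates $\J_J$ on the larger join space, and $\ker R={\rm Im}\,\tilde\Lambda$ gives the required vanishing on $\mathbf{\Lambda}\bigl(\Pr^{r(n+1)-1}\bigr)$. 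This buys a one-case proof that never needs the automorphism case; the paper's route buys completely explicit generators at the cost of that extra step. One caution: your closing remark about ``choosing coordinates in which $\tilde\Lambda$ is the standard inclusion'' should stay a remark and not replace the coordinate-free construction, since normalizing coordinates on $\Pr^{n'}$ implicitly invokes invariance under automorphisms of the target --- exactly the step the paper must prove separately and your construction allows you to skip.
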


In particular, if $T$ is a linear automorphism of $\Pr^n$, then
\begin{equation}\label{debatt}
T_*(\mu_1\bullet\cdots \bullet \mu_r)=
T_*\mu_1\bullet \cdots \bullet T_*\mu_r.
\end{equation}

\begin{proof}
As in Example~\ref{nyckelben}, $\Lambda$ induces a linear embedding $\mathbf{\Lambda}\colon\P^{r(n+1)-1}\to\P^{r(n'+1)-1}$
and $\mathbf{\Lambda}\circ j=j\circ \Lambda$, where $j$ denotes the join diagonal in both $\P^{r(n+1)-1}$ and 
$\P^{r(n'+1)-1}$. Therefore, since $j_*$ is injective, to show the proposition it is enough to check that 
$\mathbf{\Lambda}_*j_*(\mu_1\bullet\cdots \bullet \mu_r) = j_*(\Lambda_*\mu_1\bullet \cdots \bullet \Lambda_*\mu_r)$, i.e., that
\begin{equation}\label{molnbank}
\mathbf{\Lambda}_*\big(V(\J_J, L, \mu_1\times_J\cdots \times_J \mu_r)\big) =V(\J_J, L, \Lambda_*\mu_1\times_J\cdots \times_J \Lambda_*\mu_r).
\end{equation} 

In the special case that $\Lambda$ is a linear automorphism of $\P^n$, \eqref{molnbank} follows by noticing that $\mathbf{\Lambda}^*$
in this case maps sections of $L$ to sections of $L$, preserves
$\J_J$, and, in view of a simple extension of Example~\ref{nyckelben}, that
$\Lambda_*\mu_1\times_J\cdots \times_J\Lambda_*\mu_r = \mathbf{\Lambda}_*(\mu_1\times_J\cdots\times_J\mu_r)$.

For the general case we may now  assume that $\Lambda\colon\P^n_x\to\P^{n'}_{x,y}$ 
is the map $[x]\mapsto [x:0]$.
Then $\mathbf{\Lambda}[x^1:\cdots:x^r]=[x^1:0:\cdots:x^r:0]$. Let $\eta$ be as in \eqref{lucia} and let $\eta'$ be the tuple of 
$(r-1)(n'-n)$ linear forms
$(y^{j+1}_k-y^j_k)$, $j=1,\ldots,r-1$, $k=1,\ldots,n'-n$. Then $\eta$ and $(\eta,\eta')$ define $\J_J$ in $\P^{r(n+1)-1}$ and $\P^{r(n'+1)-1}$,
respectively. By Proposition~\ref{pucko} we get
\begin{equation*}
M^{L,(\eta,\eta')}\wedge\mathbf{\Lambda}_*(\mu_1\times_J\cdots\times_J\mu_r)=
\mathbf{\Lambda}_*(M^{L,\eta}\wedge \mu_1\times_J\cdots\times_J\mu_r),
\end{equation*}
which implies  \eqref{molnbank} in view of Example~\ref{nyckelben}.
\end{proof}

%
%

\begin{prop}\label{kokosboll}
If $\mu_1, \ldots,\mu_r\in\B(\P^n)$ have pure dimensions,
then
\begin{equation}\label{bez}
\deg(\mu_1\bullet\cdots\bullet \mu_r)=\prod_1^r\deg \mu_j
-\int_{\P^{r(n+1)-1}\setminus\Delta_J}(dd^c\log|\eta|_\s^2)^{d}\w
(\mu_1\times_J\cdots \times_J \mu_r), 
\end{equation}
where $d$ is given by \eqref{apa}. 
\end{prop}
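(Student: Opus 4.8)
The plan is to apply the mass formula of Proposition~\ref{skottradd} to the single generalized cycle $\mu:=\mu_1\times_J\cdots\times_J\mu_r$ and then identify each term. First I would fix representatives $\mu_j\in\GZ(\P^n)$ of the given classes. Since the $\mu_j$ have pure dimensions, $\mu$ has pure dimension $d$ given by \eqref{apa}, so $\mu\in\GZ_d(\P^{r(n+1)-1})$ and Proposition~\ref{skottradd} applies with $X=\P^{r(n+1)-1}$, $L=\Ok(1)$, $Z=\Delta_J$, and $\eta$ any minimal generating tuple for $\J_J$, e.g.\ the one in \eqref{lucia}. This gives
\begin{equation*}
\deg\mu=\sum_{k=0}^d\deg\big(M^{L,\eta}_k\w\mu\big)+\deg\big(\ett_{X\setminus\Delta_J}(dd^c\log|\eta|_\s^2)^d\w\mu\big).
\end{equation*}

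Next I would evaluate the three ingredients. The left-hand side equals $\prod_1^r\deg\mu_j$ by Proposition~\ref{gnet}. For the finite sum on the right, recall from Definition~\ref{bulletdef} and the discussion following it that $M^{L,\eta}\w\mu=\sum_k M^{L,\eta}_k\w\mu$ represents $V(\J_J,L,\mu)=j_*(\mu_1\bullet\cdots\bullet\mu_r)$, the component of dimension $\ell$ being represented by $M^{L,\eta}_{d-\ell}\w\mu$; hence $\sum_{k=0}^d\deg(M^{L,\eta}_k\w\mu)=\deg\big(j_*(\mu_1\bullet\cdots\bullet\mu_r)\big)$. Finally, $(dd^c\log|\eta|_\s^2)^d$ has bidegree $(d,d)$ while $\mu$ has dimension $d$, so $\ett_{X\setminus\Delta_J}(dd^c\log|\eta|_\s^2)^d\w\mu$ has dimension $0$ and its degree is just its total integral $\int_{\P^{r(n+1)-1}\setminus\Delta_J}(dd^c\log|\eta|_\s^2)^d\w\mu$. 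Substituting these into the mass formula and rearranging yields \eqref{bez}.

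The only step that genuinely needs an argument, and which I regard as the main (though mild) obstacle, is the invariance of degree under the linear embedding $j\colon\P^n\hookrightarrow\P^{r(n+1)-1}$. Since $j$ is linear, $j^*\Ok(1)=\Ok(1)$ and so $j^*\omega=\omega$; applying the projection formula \eqref{ingen} to each pure-dimensional component $\nu_\ell$ of $\mu_1\bullet\cdots\bullet\mu_r$ gives $\omega^\ell\w j_*\nu_\ell=j_*(\omega^\ell\w\nu_\ell)$, and integrating (recall $j$ is proper) shows $\deg(j_*\nu_\ell)=\deg\nu_\ell$; summing over $\ell$ gives $\deg\big(j_*(\mu_1\bullet\cdots\bullet\mu_r)\big)=\deg(\mu_1\bullet\cdots\bullet\mu_r)$, which is the identification used above. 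I would also remark that the resulting formula is consistent at the level of $\B$-classes: both $\prod_1^r\deg\mu_j$ and $\deg(\mu_1\bullet\cdots\bullet\mu_r)$ depend only on the classes of the $\mu_j$, so the integral term is likewise independent of the chosen representatives and of $\eta$, even though the individual terms of the mass formula are not.
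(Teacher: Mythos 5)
Your proof is correct and follows essentially the same route as the paper's: both apply the mass formula of Proposition~\ref{skottradd} to $\mu_1\times_J\cdots\times_J\mu_r$, identify the total degree via Proposition~\ref{gnet}, and identify the sum $\sum_k\deg(M^{L,\eta}_k\w\mu)$ with $\deg(\mu_1\bullet\cdots\bullet\mu_r)$ using the fact that $j^*\hat\omega_{\P^{r(n+1)-1}}=\hat\omega_{\P^n}$ together with the projection formula, so that $\deg j_*\nu=\deg\nu$. The only difference is cosmetic (the order in which the terms are identified), so there is nothing to add.
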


\begin{proof}
First notice that 
$j^*\hat\omega_{\P^{r(n+1)-1}}=\hat\omega_{\P^n}$, where $j$ is defined in \eqref{jdia} and $\hat\omega_{\P^M}$
denote the Fubini-Study form on $\P^M$. Therefore, for $\mu\in
\GZ_k(\P^n)$, 
\[
\deg j_*\mu = \int_{\P^{r(n+1)-1}} \hat\omega_{\P^{r(n+1)-1}}^k\wedge
j_* \mu =
\int_{\P^n} \hat\omega^k_{\P^n}\wedge
\mu =
\deg \mu. 
\]
In particular, 
$\deg (\mu_1\bullet\cdots\bullet \mu_r)= \deg \big (M^{L,\eta}\w
(\mu_1\times_J \cdots \times_J \mu_r)\big )$. 
Now, by Proposition~\ref{skottradd}, 
\[
\deg (\mu_1\bullet\cdots\bullet \mu_r)=\deg
(\mu_1\times_J\cdots\times_J\mu_r)-\deg\big (\1_{\P^{r(n+1)-1}\setminus\Delta_J}(dd^c\log|\eta|_\s^2)^{d}\w
(\mu_1\times_J\cdots \times_J \mu_r)\big ),
\]
and thus \eqref{bez} follows in view of Proposition \ref{gnet}. 
\end{proof}

The B{\'e}zout formula \eqref{likhet}
holds if and only if the last term in \eqref{bez} vanishes. This
happens if  $(r+1)(n-1)\le d$ which is the same as \eqref{vill},
cf.~the remark after Proposition~\ref{skottradd},
\eqref{lucia}, and \eqref{apa}.

However, as mentioned in the introduction, the condition \eqref{vill} is not
necessary for \eqref{likhet} to hold.
For instance, by Proposition~\ref{pucko2},
the $\bullet$-product is not
affected if we perform the multiplication in a larger $\P^{n'}$.
Thus, as mentioned already in the introduction,
the self-intersection of a $k$-plane is the $k$-plane itself,
in particular, the self-intersection of a point is the point
itself.
On the other hand, clearly the product of two
distinct points vanishes.  In this case the last term in \eqref{bez} carries the "missing mass"
in the B{\'e}zout formula.
\smallskip

We are now in position to prove Theorem~\ref{mainthm}.

\begin{proof}[Proof of Theorem~\ref{mainthm}]
The first statements, about multlilinearity, commutativity and the support,
are already discussed after Definition~\ref{bulletdef}.

Since local intersections numbers (multiplicities) are locally defined we can work in an
affinization and use the results from \cite[Sections~9 and 10]{aswy} to
prove \eqref{stenstod}.  However, we omit the details since it is also a direct consequence
of the global Proposition~\ref{baka2} below, cf.\ \eqref{blomma}
and \eqref{hennes}.
%

In the discussion after the proof of Proposition~\ref{kokosboll} is noticed that
\eqref{likhet} holds  if \eqref{vill} is fulfilled.
If $\mu_j$ are effective, then so is $\mu_1\times_J \cdots \times_J
\mu_r$, and it follows that \eqref{blapenna}, and hence
$\mu_1\bullet\cdots\bullet\mu_r$, are effective, cf.\
\eqref{epsilon2}.
Moreover $\deg \mu_j$ are positive
and the last term in \eqref{bez} is non-positive
so we get \eqref{olikhet}. 


If $\mu_1,\ldots, \mu_r$ are cycles that intersect properly, by the dimension
principle only the component of $\mu_1\bullet\cdots\bullet\mu_r$ of dimension $\rho$ is nonzero, where $\rho$ is as in
\eqref{vill}, and this is a cycle. In this case
the local intersections numbers $\epsilon_\ell(\mu_1, \ldots,  \mu_r,x)$ coincide
with the multiplicites of the proper intersection cycle
$\mu_1\cdot_{\P^n}\cdots\cdot_{\P^n}\mu_r$, cf.\ \cite[Example~10.2]{aswy}, and thus \eqref{studsmatta}
follows.
\end{proof}

We will now look at more explicit representations of the $\bullet$-product.
Recall that we have a natural Hermitian metric on $\Ok(1)$, cf.\
Section~\ref{vogelsec}, and thus, cf.~\eqref{lucia},
$$
|\eta|^2=\sum_{i=1}^{r-1} \sum_{k=0}^n|x_k^{i+1}-x_k^i|^2.
$$
From \eqref{lambdasond} 
we see that if
$\mu_i$ have pure dimension, then 
 $j_*(\mu_1\bullet\cdots\bullet \mu_r)_\ell$ is given by the value at $\lambda=0$ of
\begin{equation*}
M_k^{L,\eta,\lambda}\w (\mu_1\times_J \cdots \times_J\mu_r)
:=\dbar |\eta|^{2\lambda}\w
\frac{\partial|\eta|^2}{2\pi i|\eta|^2}\w
\big(dd^c\log|\eta|^2_\s\big)^{k-1}\w (\mu_1\times_J \cdots \times_J\mu_r),
\end{equation*}
where
$
k=d-\ell=\dim(\mu_1\times_J \cdots \times_J \mu_r)-\ell.  
$
Notice that 
\[
k\ge \dim(\mu_1\times_J \cdots \times_J \mu_r)-(\dim\mu_1+\cdots
+\dim\mu_r)= r-1\ge 1
\] 
so that term corresponding to $k=0$ in  \eqref{lambdasond} is
irrelevant here; indeed 
$\dim (\mu_1\bullet\cdots\bullet\mu_r) \leq \sum \dim\mu_j$ and so 
$\ell\leq \sum \dim\mu_j$. 

In an affinization we can also obtain the $\bullet$-product, cf.~\eqref{epsilon2},  as a limit of smooth forms
times $\mu_1\times_J\cdots \times_J\mu_r$
by the formula   
\begin{equation*}
M_k^{L,\eta}\w (\mu_1\times_J \cdots \times_J\mu_r)= 
\lim_{\epsilon\to 0}\frac{\epsilon(dd^c|\eta|_\s^2)^k}{(\epsilon+|\eta|_\s^2)^{k+1}}\w
(\mu_1\times_J \cdots \times_J \mu_r).
\end{equation*} 
When computing $M_k^{L,\eta}\w (\mu_1\times_J \cdots
\times_J\mu_r)$ 
it can be convenient to compute the SV-cycle $v^{a\cdot \eta}\wedge
(\mu_1\times_J \cdots \times_J\mu_r)$ for generic hyperplanes
$a_0\cdot\eta, a_1\cdot \eta, \ldots, a_n\cdot\eta$, $a_j\in\P^n$ and
then form the mean value,
cf.\ 
Section \ref{vogelsec}. See Section~\ref{exsektion}  for examples.

\begin{remark}\label{r2}
Assume that $r=2$. Given the standard coordinates on $\C^{n+1}$ there is a canonical choice of $\eta$ defining
$\Delta_J$, namely $\eta_j=y_j-x_j$, $j=0,\ldots,n$, cf.\ \eqref{lucia}. Thus,
given representatives of $\mu_j$, there are
canonical representatives \eqref{blapenna} of $V(\J_J, L,
\mu_1\times_J \mu_2)$, and since \eqref{linser} is injective we can
define the $\bullet$-product on the level of generalized
cycles. Indeed, given 
$\mu_1,\mu_2\in \GZ(\P^n)$, we define
$\mu_1\bullet\mu_2$ as the unique current in $\GZ(\P^n)$ such that 
\[
j_*(\mu_1\bullet \mu_2)= M^{L, \eta}\wedge (\mu_1
\times_J \mu_2).
\]

Let $T$ be a linear automorphism of $\P^{n}$ induced by a unitary mapping $\tilde T$ on $\C^{n+1}$,
let $\tilde{\mathbf{T}}=\tilde T\times \tilde T$, and let $\mathbf{T}$ be the induced linear automorphism  of $\P^{2n+1}$;
cf.\ Example~\ref{nyckelben} and the proof of Proposition~\ref{pucko2}.
Then, considering $\eta$ as a tuple of linear forms on $\C^{2n+2}$, 
$
|\tilde{\mathbf{T}}^*\eta|^2_{\C^{2n+2}}=|\eta|^2_{\C^{2n+2}}$. 
Moreover,
$dd^c\log |\eta|^2_{\C^{2n+2}} = dd^c\log |\eta|^2_\circ$, where we on the right-hand side 
consider $\eta$ as a tuple of sections of $L\to\P^{2n+1}$. Hence, 
$dd^c\log |\mathbf{T}^*\eta|^2_\circ= dd^c\log |\eta|^2_\circ$, and so 
\[
M^{L, \mathbf{T}^*\eta}\wedge (\mu_1
\times_J \mu_2)
=
M^{L, \eta}\wedge (\mu_1
\times_J \mu_2).
\]
It follows that $T_*\mu_1\bullet T_*\mu_2=T_*(\mu_1\b\mu_2)$ as generalized cycles. 
\end{remark}

\begin{remark}\label{robinson}
Consider \eqref{hunger} and the corresponding diagram \eqref{svangd}. By abuse of notation, let $\Delta_J$ denote 
the preimage under $\pi$ of the join diagonal, let $\J_J$
denote the sheaf in $Bl\,\P^{r(n+1)-1}$ corresponding to $\Delta_J$, and 
let $j$ denote the embedding of $\P^n$ in $Bl\,\P^{r(n+1)-1}$ as $\Delta_J$ induced by
\eqref{jdia}.
Since \eqref{blapenna} has support on $\Delta_J$ and $Bl\,\P^{r(n+1)-1}$ and
$\Pr^{r(n+1)-1}$ coincide in a \nbh of $\Delta_J$ we can alternatively
think of \eqref{blapenna} as a generalized cycle on $Y$. 
 


\end{remark}


%

\section{Relation to the $\cdot_{\B(\Pr^n)}$ product}\label{rela}

In this section we prove Theorem~\ref{main2}.    
For simplicity let us restrict from now on to the case $r=2$; the
general case is handled in a similar way.

Consider the mapping   
\begin{equation}\label{grymmaste} 
i^{!!}\colon\B(\P^n\times \P^n)\to \B(\P^n),\quad i_*i^{!!}\mu = c(N_{\J_\Delta}(\P^n\times \P^n))\w S(\J_{\Delta},\mu), 
\end{equation}
where  $i$ is given by \eqref{valkyria}. 
Notice that $\mu_1\cdot_{\B(\Pr^n)}\mu_2=i^!(\mu_1\times\mu_2)$ is the component of dimension $\rho$
of $i^{!!}(\mu_1\times \mu_2)$, where $\rho$ is given
by \eqref{vill}, i.e., $\rho=\dim\mu_1+\dim\mu_2-n$. 

Next, consider the mapping  
\begin{equation}\label{grymmare} 
j^{\flat}\colon\B(\P^n\times \P^n)\to \B(\P^n), \quad j_*j^{\flat}\mu = c(N_{\J_J} \P^{2n+1})\w S(\J_{J},\pi_*p^*\mu), 
\end{equation}
where we are using the notation from Section
\ref{bulletsection} and where  $j$ is given by
\eqref{jdia}.

\begin{prop}\label{baka2}  
The mappings $i^{!!}$ and $j^\flat$ coincide. 
\end{prop}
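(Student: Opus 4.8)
The plan is to exploit the $\P^1$-bundle structure of $p$ and reduce both sides to a computation over the diagonal. Write $X=\P^n\times\P^n$ and $Y=Bl\,\P^{2n+1}$; recall from Section~\ref{bulletsection} that $p\colon Y\to X$ is a smooth proper map with one-dimensional fibres and that $p\circ j=i$, so that $\Delta_J=j(\P^n)$ is a section of $p$ lying over the diagonal $\Delta=i(\P^n)$. Since both $i^{!!}$ and $j^\flat$ have the shape $c(N)\w S(\J,\,\cdot\,)$, read off in $\B(\P^n)$ via the identifications $i$ and $j$, and since these identifications are compatible through $p\circ j=i$, it is enough to prove: (A) the $\B$-Segre classes agree, $S(\J_J,\pi_*p^*\mu)=S(\J_\Delta,\mu)$, under $p\colon\Delta_J\to\Delta$; and (B) the total Chern classes of the normal bundles agree under the same isomorphism.

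Part (B) is quick. The diagonal is smooth, so $N_{\J_\Delta}X=T\Delta\cong T\P^n$ and $c(N_{\J_\Delta}X)=(1+\omega)^{n+1}$ by the Euler sequence; while $\Delta_J$ is cut out in $\P^{2n+1}$ by the $n+1$ sections \eqref{lucia} of $\O(1)$, so $N_{\J_J}\P^{2n+1}=\O(1)^{n+1}|_{\Delta_J}$ and again $c(N_{\J_J}\P^{2n+1})=(1+\omega)^{n+1}$. Structurally, the normal-bundle sequence $0\to T_p|_{\Delta_J}\to N_{\Delta_J}Y\to p^*N_\Delta X|_{\Delta_J}\to 0$ together with these computations forces $c_1(T_p|_{\Delta_J})=0$: the fibre direction contributes trivially, which is exactly what makes (B) hold.

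For (A) I would first move the computation onto $Y$: since $M^{L,\eta}\w\pi_*p^*\mu$ is supported on $\Delta_J$, where $\pi$ is biholomorphic (Remark~\ref{robinson}), formula \eqref{ponny} gives $S(\J_J,\pi_*p^*\mu)\cong M^{\tilde\sigma}\w p^*\mu$ on $Y$ for any section $\tilde\sigma$ defining $\J_{\Delta_J}$. In an affinization $\J_{\Delta_J}$ factors as $(s)+p^*\J_\Delta$, where $s$ is a fibre coordinate defining a local section $\Sigma$ of $p$ and $p^*\J_\Delta$ is the pullback of the diagonal ideal; take $\tilde\sigma=(s,p^*\sigma_\Delta)$ with $\sigma_\Delta$ defining $\J_\Delta$. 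As $p$ is a submersion, the regularizations \eqref{epsilon} commute with $p^*$, so $M^{p^*\sigma_\Delta}\w p^*\mu=p^*S(\J_\Delta,\mu)$. The crux is the factorization
\[
M^{\tilde\sigma}\w p^*\mu=M^{s}\w\big(M^{p^*\sigma_\Delta}\w p^*\mu\big)=M^{s}\w p^*S(\J_\Delta,\mu),
\]
valid because the fibre-section $\Sigma$ meets the base-pullback locus $p^{-1}\Delta$ properly along $\Delta_J$ (the successive/proper intersection results of \cite[Section~7]{aeswy1}). Since $\div s=\Sigma$ then meets $p^*S(\J_\Delta,\mu)$ properly, Proposition~\ref{myrstack} and the fact that restricting a pullback to a section is the identity give $M^s\w p^*\nu\cong\nu$ under $p\colon\Delta_J\to\Delta$; the only possible correction is the factor $\hat s(N_\Sigma Y)$, trivial on $\Delta_J$ by $c_1(T_p|_{\Delta_J})=0$ from (B). This proves (A), and combined with (B) the proposition.

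The main obstacle is the factorization displayed above. Monge--Amp\`ere products of a section of a direct sum do not factor in general, and here $p^*\mu$ may meet $\Delta_J$ very non-properly, so one cannot argue with honest intersection cycles but must run everything through the explicit regularizations \eqref{epsilon2}/Proposition~\ref{struts2}. What saves the day is that the two parts of $\J_{\Delta_J}$ sit in complementary directions of $p$---one along the fibre, one pulled back from the base---so the fibre integration decouples; checking this decoupling, and that it produces no spurious lower-dimensional ($\omega$-)corrections beyond the trivial bundle $T_p|_{\Delta_J}$, is the technical heart. As a sanity check, for $\mu=\1$ both sides collapse via Proposition~\ref{gata1} to $[\Delta]\cong[\Delta_J]$, using $s(T\P^n)=s(\O(1)^{n+1})$.
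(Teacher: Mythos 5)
Your reduction to the two claims (A) and (B) is sound as a reformulation: since both normal bundles have total Chern class $(1+\omega)^{n+1}$ and multiplication by $(1+\omega)^{n+1}$ is invertible on $\B$, Proposition~\ref{baka2} is indeed equivalent to the equality of $\B$-Segre classes, and your auxiliary observations (triviality of $\L|_{\Delta_J}$, and $M^s\w p^*\nu\cong\nu$ for $\nu$ supported on $\Delta$) are correct. The genuine gap is the displayed factorization
\[
M^{\tilde\sigma}\w p^*\mu \;=\; M^{s}\w\bigl(M^{p^*\sigma_\Delta}\w p^*\mu\bigr),
\]
which carries the entire weight of the proof and is never established. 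The justification you offer --- that $\Sigma$ meets $p^{-1}\Delta$ properly along $\Delta_J$, citing the proper/successive-intersection results of \cite[Section~7]{aeswy1} --- addresses only how the two loci $\Sigma$ and $p^{-1}\Delta$ meet \emph{each other}; the difficulty is how either of them meets the arbitrary generalized cycle $p^*\mu$, which can be completely degenerate. Iterated Monge--Amp\`ere products of this kind do not factor in general (as an identity of currents the formula is already metric-dependent, so at best it can hold up to $\B$-equivalence, which itself demands proof), and no result in the cited material yields it. You concede this yourself in the closing paragraph, where you say that checking the ``decoupling'' is ``the technical heart'': that is precisely the part that is missing, and in fact proving your factorization is essentially equivalent to proving the proposition, given the easy ingredients.

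For comparison, the paper never asserts an untwisted factorization. Its key step is Lemma~\ref{lupin}: one writes $\mu=\tau_*\alpha$ and, crucially, chooses the resolution $W$ so that $\tau^*\J_\Delta$ is \emph{principal}; then on the fibre product $W'$ of \eqref{studs3} the pulled-back join ideal becomes a codimension-two \emph{regular embedding}, so both Monge--Amp\`ere products collapse to explicit expressions $\hat s(N)\w[Z]$ via Proposition~\ref{gata1}. Comparing normal bundles, $N_{\rho^*\phi}W'=\tilde\pi^*N_{\tau^*\fff}W\oplus\rho^*\L$ in \eqref{moral1}, and fundamental cycles, $[Z_{\tau^*\fff}]=\tilde\pi_*[Z_{\rho^*\phi}]$ in \eqref{moral3}, one arrives at the \emph{twisted} pushforward formula \eqref{apan2}, $M^\fff\w\mu\sim p_*\bigl(\hat c(\L)\w M^\phi\w p^*\mu\bigr)$; the factor $\hat c(\L)$ is then absorbed by the splitting $N_{\Delta_J}Y=p^*N_\Delta X\oplus\L$ together with \eqref{trump}, rather than discarded by triviality of $\L|_{\Delta_J}$. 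To repair your argument you would have to carry out exactly this kind of principalization and regular-embedding reduction before your factorization can be verified --- at which point you would be reproving Lemma~\ref{lupin}.
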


Let $\eta$ be the section \eqref{lucia} of $L=\Ok(1)$ equipped with
the Fubini-Study metric, and let
$\hat\omega$ be the first Chern
form. Then $\hat c(N_{\J_J} \P^{2n+1})=(1+\hat\omega)^{n+1}$ and thus, by
\eqref{gastel22}, 
$j^\flat$ is represented by 
\[
(1+\hat \omega)^{n+1}\wedge M^\eta\wedge \pi_*p^*\mu
=
\sum_{j\geq 0} (1+\hat\omega)^{n+1-j}\wedge M^{L,\eta}_j \wedge
\pi_*p^*\mu.
\]

Now assume that $\mu=\mu_1\times \mu_2$ and let 
%
%
$d=\dim \pi_* p^* \mu
= \dim\mu_1+\dim\mu_2 +1$, cf.\ \eqref{apa}. Note that
$\rho=d-(n+1)$. It follows that 
\begin{multline*}
\big ( \sum_{j\geq 0} (1+\hat\omega)^{n+1-j}\wedge M^{L,\eta}_j \wedge
\pi_*p^*\mu\big )_\rho 
=
\big ( \sum_{\ell\geq 0} (1+\hat\omega)^{\ell-\rho}\wedge M^{L,\eta}_{d-\ell} \wedge
\pi_*p^*\mu\big )_\rho 
=\\
\sum_{\ell\geq 0} \hat\omega^{\ell-\rho}\wedge M^{L,\eta}_{d-\ell} \wedge
\pi_*p^*\mu.  
\end{multline*}
By Definition~\ref{bulletdef}, $j_*(\mu_1\bullet\mu_2)_\ell$ is represented by 
$M^{L,\eta}_{d-\ell}\wedge\pi_* p^* \mu$ and therefore 
\begin{equation}\label{hennes}
\big ( c(N_{\J_J} \P^{2n+1})\w S(\J_{J},\pi_*p^*\mu) \big
)_\rho 
= 
j_*\sum_{\ell\geq 0} \omega^{\ell-\rho} \wedge (\mu_1\bullet \mu_2)_\ell
\end{equation} 
and thus Theorem~\ref{main2} follows from Proposition \ref{baka2}.

\begin{remark}
There are classical mappings $\A(\P^n\times\P^n)\to\A(\P^n)$ analogous to
$i^!$ and $j^\flat$. If $\mu_1$ and $\mu_2$ are cycles and $\mu=\mu_1\times\mu_2$,
then, see \cite[Example~8.4.5]{Fult}, the analogue of Proposition~\ref{baka2} holds for the component of dimension $\rho$,
which is the component of main interest also for us. However, the argument given in \cite{Fult} cannot be transferred to
to the $\B$-setting.
\end{remark}

\begin{proof}[Proof of Proposition~\ref{baka2}]
Let $Bl\,\P^{2n+1}_{x,y}$ be as in Section \ref{bulletsection}. 
%
Since $Bl\,\P^{2n+1}_{x,y}$ coincides with $\Pr^{2n+1}$ in a \nbh of $\Delta_J$,  the restrictions
of $c(N_{\J_J}{Bl\,\P^{2n+1}_{x,y}})$ and $c(N_{\J_J}{\Pr^{2n+1}})$ to
$\Delta_J$ coincide, and moreover, 
$\pi_* p^*\mu$ and $p^*\mu$ coincide on $\Delta_J$, cf.\ Remark
\ref{robinson}. 
Therefore $j^\flat$ coincides with the mapping 
\begin{equation}\label{grym} 
\B(\P^n\times \P^n)\to \B(\P^n), \quad \mu \mapsto c(N_{\J_J} Bl\,\P^{2n+1}_{x,y})\w S(\J_{J},p^*\mu), 
\end{equation}
where we are identifying 
$\Delta_J\subset Bl\,\P^{2n+1}_{x,y}$ with $\P^n$. 
Hence it suffices to prove that $i^!$ coincides with
\eqref{grym}. 

\smallskip 

Let $M=\Pr^n$ so that $\Delta=i(M)$ and $\Delta_J=j(M)$ and let $X=\P^n\times\P^n$ and $Y=Bl\,\P^{2n+1}_{x,y}$. Then 
%
\begin{equation*}
\xymatrix{
M \ar@{^{(}->}[r]^{j} \ar[d]^{id} &  Y  \ar[d]^{p}  \\ 
M    \ar@{^{(}->}[r]^{i}  & X
 }
\end{equation*}  
commutes. 
Note that $j(M)$ is a divisor in $p^{-1}i(M)$. 
Let $E\to Y$ and $F\to X$ be Hermitian vector bundles with holomorphic sections
$\phi$ and $\fff$ that define $\J_{j(M)}$ and $\J_{i(M)}$, respectively. Fix Hermitian metrics on $N_{j(M)}Y$ and $N_{i(M)}X$ and let
$\hat c(N_{j(M)}Y)$ and $\hat c(N_{i(M)}X)$  be the associated Chern forms.  
Moreover, let $\mu\in\GZ(\Pr^n\times\Pr^n)$ denote also a fixed
representative of $\mu\in \B(\P^n\times \P^n)$.

\begin{lma}\label{lupin}
Let $\L\to p^{-1} i(M)$  be the line bundle associated with the
divisor $j(M)\subset p^{-1} i(M)$. Then
\begin{equation}\label{apan1}
N_{j(M)}Y=p^* N_{i(M)}X\oplus\L  \quad \text{on} \quad j(M),
\end{equation}
and for any Hermitian metric on $\L$, 
\begin{equation}\label{apan2}
M^\fff \w\mu\sim p_* \big(\hat c(\L)\w M^{\phi} \w p^* \mu\big) \quad \text{in} \quad \GZ(X).
\end{equation}
\end{lma}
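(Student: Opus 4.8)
The plan is to obtain \eqref{apan1} from the normal-bundle sequence of the flag $j(M)\subset W\subset Y$, where $W:=p^{-1}i(M)$, and then to prove \eqref{apan2} by transporting the Monge--Amp\`ere computation up to $Y$ along the submersion $p$ and back down again. For \eqref{apan1}, first note that since $p$ is a submersion $W$ is a smooth submanifold and $dp$ identifies $N_WY$ with $p^*N_{i(M)}X$; restricting to $j(M)$ and using $p\circ j=i$ gives $(N_WY)|_{j(M)}=p^*N_{i(M)}X$. Moreover $j(M)$ is a divisor in $W$, so $N_{j(M)}W=\L|_{j(M)}$ by the definition of $\L$. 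The exact sequence $0\to N_{j(M)}W\to N_{j(M)}Y\to (N_WY)|_{j(M)}\to 0$ thus reads $0\to\L\to N_{j(M)}Y\to p^*N_{i(M)}X\to 0$ on $j(M)$, and splitting it orthogonally for the chosen metrics yields \eqref{apan1} as smooth Hermitian bundles. What is actually used below is the resulting Whitney relation $\hat c(N_{j(M)}Y)\sim\hat c(\L)\wedge p^*\hat c(N_{i(M)}X)$, which in any event follows from \eqref{trump}.

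The coefficients in \eqref{apan2} are pinned down by the model case $\mu=\1_X$. Here Proposition~\ref{gata1} gives $M^\fff=\hat s(N_{i(M)}X)\wedge[i(M)]$ and $M^\phi=\hat s(N_{j(M)}Y)\wedge[j(M)]$. By \eqref{apan1} and the line-bundle identity $\hat c(\L)\wedge\hat s(\L)=1$, cf.\ \eqref{smal}, we get $\hat c(\L)\wedge\hat s(N_{j(M)}Y)=p^*\hat s(N_{i(M)}X)$, so $\hat c(\L)\wedge M^\phi=p^*\hat s(N_{i(M)}X)\wedge[j(M)]$. Since $p$ restricts to an isomorphism $j(M)\xrightarrow{\sim}i(M)$, we have $p_*[j(M)]=[i(M)]$, and the projection formula \eqref{ingen} gives $p_*(\hat c(\L)\wedge M^\phi)=\hat s(N_{i(M)}X)\wedge[i(M)]=M^\fff$, which is \eqref{apan2} for $\mu=\1_X$.

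For general $\mu$ I would first transport $M^\fff\wedge\mu$ up to $Y$. Because $p$ is a submersion, pullback commutes with the regularizations of Proposition~\ref{struts}, so $M^{p^*\fff}\wedge p^*\mu=p^*(M^\fff\wedge\mu)$; as $p^*\fff$ defines $W$, this is a generalized cycle carried by $W$. The crux is then the divisor/section identity
\[
M^\phi\wedge p^*\mu\sim \hat s(\L)\wedge\big([j(M)]\wedge(M^{p^*\fff}\wedge p^*\mu)\big),
\]
where $[j(M)]\wedge(\cdot)$ is the proper intersection with the divisor $j(M)\subset W$. Granting it, multiplying by $\hat c(\L)$ and cancelling the Segre factor via $\hat c(\L)\wedge\hat s(\L)=1$ leaves $[j(M)]\wedge p^*(M^\fff\wedge\mu)$; this is the copy of $M^\fff\wedge\mu$ sitting on the section $j(M)$, which $p_*$ carries isomorphically back to $M^\fff\wedge\mu$, giving \eqref{apan2}.

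The main obstacle is the displayed identity for a general generalized cycle. One cannot reduce to Proposition~\ref{gata1} by base change: Proposition~\ref{gata1} applies only to $\1$, and for $\mu=\tau_*\alpha$ the locus $\tau^{-1}i(M)$ need not be a regular embedding, so the pulled-back ideal downstairs is not controlled. Instead I would verify the identity near $j(M)$, where $Y=\P^{2n+1}$ and $p=\mathfrak p$ are explicit, after replacing $\phi$ by the equivalent defining section $(p^*\fff,s)$ of $\J_{j(M)}$ with $s$ cutting out the divisor $j(M)$ in $W$ (legitimate by Theorem~\ref{putig}). Expanding the regularized product $\big(dd^c\log(|p^*\fff|^2+|s|^2+\epsilon)\big)^k\wedge p^*\mu$ and separating the $W$-directions from the divisor direction should produce exactly the proper intersection $[j(M)]\wedge(M^{p^*\fff}\wedge p^*\mu)$ together with the Segre form $\hat s(\L)$ of the divisor's normal bundle; controlling these mixed terms for arbitrary $\mu$, all modulo $\sim$ (where changes of metric and $B$-forms are absorbed by \eqref{trump}), is the technical heart of the argument.
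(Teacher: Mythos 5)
Your argument for \eqref{apan2} ultimately rests on the displayed identity
$M^\phi\w p^*\mu\sim \hat s(\L)\w\big([j(M)]\w(M^{p^*\fff}\w p^*\mu)\big)$,
and this is exactly the point you leave unproved. For an arbitrary generalized cycle $\mu$, ``expanding the regularized product and separating the $W$-directions from the divisor direction'' is not a routine computation: the mixed terms you would have to control are of the same nature as the lemma itself, and none of the available tools (Theorem~\ref{putig}, Proposition~\ref{struts}, Proposition~\ref{gata1}) produces such a splitting directly, precisely because Proposition~\ref{gata1} computes $M$ only on $\1$. Moreover, the reason you give for rejecting the base-change route is mistaken. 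It is true that for a fixed representative $\mu=\tau_*\alpha$ the ideal $\tau^*\J_\fff$ need not define a regular embedding; but one is free to precompose $\tau$ with a modification of $W$ that principalizes $\tau^*\J_\fff$ (this replaces $\alpha$ by its pullback and changes neither $\tau_*\alpha$ nor its class), and this is exactly how the paper proceeds. Once $\tau^*\fff$ is principal, in the fiber square \eqref{studs3} the section $\rho^*\phi$ is generated by $\tilde\pi^*\tau^*\fff$ together with the pullback of a section cutting out the divisor $j(M)$ in $p^{-1}i(M)$, hence defines a regular embedding of codimension $2$ in $W'$. The transition-matrix argument then gives $N_{\rho^*\phi}{W'}=\tilde\pi^*N_{\tau^*\fff}{W}\oplus\rho^*\L$ on $\{\rho^*\phi=0\}$; one checks $[Z_{\tau^*\fff}]=\tilde\pi_*[Z_{\rho^*\phi}]$ by a local computation at generic smooth points; and Proposition~\ref{gata1} (applied upstairs, where both embeddings are regular) yields $M^{\tau^*\fff}\sim\tilde\pi_*\big(\hat c(\rho^*\L)\w M^{\rho^*\phi}\big)$ on $W$. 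From this, \eqref{apan2} follows by wedging with the smooth form $\alpha$, pushing forward with $\tau$ and $\rho$, and using \eqref{grus}. Your submersion identity $M^{p^*\fff}\w p^*\mu=p^*(M^\fff\w\mu)$ is correct as far as it goes, but it cannot substitute for this step.

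A secondary issue concerns \eqref{apan1}: the flag $j(M)\subset p^{-1}i(M)\subset Y$ only produces the exact sequence $0\to\L\to N_{j(M)}Y\to p^*N_{i(M)}X\to 0$, and an orthogonal splitting of it is merely $C^\infty$, whereas \eqref{apan1} asserts a holomorphic direct sum. The paper obtains the holomorphic splitting by exhibiting minimal generators $(p^*s,t)$ of $\J_{j(M)}$ whose transition matrices are block diagonal, with blocks $p^*g$ and $h$. You are right that for the application to Proposition~\ref{baka2} only the Whitney-type relation modulo $\sim$, via \eqref{trump}, is needed, so this defect alone would be repairable; but together with the unproved crux identity above, the lemma as stated is not established.
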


Taking this lemma for granted we can conclude the proof of Proposition~\ref{baka2}.  
We have to prove that if $\mu_1$ and $\mu_2$ are the unique elements in $\GZ(M)$ such that 
\begin{equation*}
i_*\mu_1=\hat c(N_{i(M)}{X})\w M^\fff\w\mu 
\end{equation*}
and
\begin{equation*}
j_*\mu_2=\hat c(N_{j(M)}Y) \w M^\phi\w p^*\mu,
\end{equation*}
then $\mu_1\sim\mu_2$ in $\GZ(M)$.

In view of \eqref{apan1} and \eqref{trump}  
we have
\begin{equation*}
\hat c(N_{j(M)}Y)\w M^\phi\w p^*\mu\sim \hat c(p^*N_{i(M)}X)\w \hat c(\L)\w M^\phi\w p^*\mu
\end{equation*}
in $\GZ(Y)$. 
Therefore, cf.~\eqref{sol}, 
\begin{equation}\label{apan5}
p_*\big( \hat c(N_{j(M)}Y)\w M^\phi\w p^*\mu\big)\sim \hat c(N_{i(M)}X)\w p_*\big(\hat c(\L)\w M^\phi\w p^*\mu\big).
\end{equation}
From \eqref{apan2} and \eqref{apan5} we get 
$$
p_*\big( \hat c(N_{j(M)}Y)\w M^\phi\w p^*\mu\big)\sim \hat c(N_{i(M)}X)\w M^\fff \w\mu,
$$
which means that 
$p_*j_*\mu_2\sim i_*\mu_1$  on $X$.
Since $p_*j_*=i_*$ and \eqref{hopper} is injective,  
we conclude that $\mu_1\sim \mu_2$ on $M$.
Thus Proposition~\ref{baka2} is proved.
\end{proof}

\begin{proof}[Proof of Lemma~\ref{lupin}] 
Let us use the notation $N_\fff X$ for  $N_{i(M)}X$ etc.
We first consider \eqref{apan1}. Notice that, 
with the notation from \cite[Section~7]{aeswy1}, for any columns of minimal sets of generators $s,s'$ of $\J_{i(M)}=\J_\fff$ 
at points on $iM\subset X$ there is an invertible matrix $g$ such that $s'=gs$. A section $\xi$ of the normal bundle
$N_\fff X$ can be defined as a set of holomorphic tuples $\xi(s)$ such that $g\xi(s)=\xi(gs)$ in $i(M)$,
i.e, the restriction to $i(M)$ of such matrices are transition matrices for $N_\fff X$.  
Let $t$ and  $t'$ be holomorphic functions in a neighborhood of a point on $p^{-1}i(M)$ such that both $t|_{p^{-1}i(M)}$ and 
$t'|_{p^{-1}i(M)}$ generate the
sheaf associated with the divisor $j(M)$ in $p^{-1}i(M)$. Then $t'=ht$ for a holomorphic
function $h$, which is non-vanishing on $p^{-1}i(M)$, and $h|_{p^{-1}i(M)}$ is a transition function for $\mathcal{L}$.
Moreover, $(p^*s,t)$ and $(p^*s',t')$ are minimal sets of generators for $\J_{j(M)}=\J_\phi$.  It follows that 
for given such minimal sets of generators 
at a point on $j(M)$ we have 
\[ 
\left[ \begin{array}{c}
p^*s'  \\
t'
\end{array} \right]=
\left[ \begin{array}{cc}
p^*g & 0 \\
0 & h
\end{array} \right]
\left[ \begin{array}{c}
p^*s  \\
t
\end{array} \right].
\]
Thus the restriction to $j(M)$ of the matrices 
$$
   G=
  \left[ {\begin{array}{cc}
   p^*g & 0 \\
   0 & h \\
  \end{array} } \right]
$$
are transition matrices for $N_{j(M)}Y$; it is then clear that 
\eqref{apan1} holds.
For future use let $\eta$ be the section of $\L\to p^{-1}i(M)$ that defines $j(M)$.
\smallskip

To prove \eqref{apan2} we must return to the definition of $p^*$, so let us assume that 
$\mu=\tau_*\alpha$ and recall the
fiber square \eqref{studs3}. 
We may also assume that
$W$ is chosen so that $\tau^*\fff$ is principal and hence $\rho^*\phi$ is a regular embedding of
codimension $2$ in $W'$.  We claim that
\begin{equation}\label{moral1}
 N_{\rho^*\phi}{W'} = \tilde\pi^*N_{\tau^*\fff}{W}\oplus \rho^*\L \quad \text{on} \quad \{\rho^*\phi=0\}.
 \end{equation}
In fact,   notice that $\tilde\pi^*\tau^*\fff$ combined with the section $\rho^*\eta$  
generate the same sheaf as $\rho^*\phi$.   
Arguing precisely as above for \eqref{apan1}  we then get \eqref{moral1}.

\smallskip
We now claim that
\begin{equation}\label{moral3}
[Z_{\tau^*\fff}]=\tilde\pi_*[Z_{\rho^*\phi}], 
\end{equation}
where $Z_{\tau^*\sigma}$ is the fundamental cycle of  the ideal
sheaf generated by $\tau^*\sigma$ etc. 
Since it is an equality of currents it is 
a local statement. By the dimension principle  
it is then enough to check it in an open set $\U\subset W$ where 
$Z_{\tau^*\fff}$ is smooth and $\tilde\pi^{-1}\U\simeq \U\times \Pr^1_t$ in suitable coordinates $(x,t)$
so that $\tilde\pi$ is $(x,t)\mapsto x$, cf.\ the proof of Lemma ~\ref{lagom}. Thus, we may assume
that the ideal generated by $\tau^*\fff$ is generated by $x_1^\ell$ in $\U$.  Then $\rho^*\phi$ is generated by $(x_1^\ell,t)$ and
\eqref{moral3} is reduced to the equality $\ell[x_1=0] =\tilde\pi_*(\ell[x_1=0]\times [t=0])$.

\smallskip
Next we claim that 
\begin{equation}\label{moral2} 
M^{\tau^*\fff}\sim\tilde\pi_*\big( \hat c(\rho^*\L)\w M^{\rho^*\phi}\big)
 \end{equation}
on $W$. 
In fact,  from \cite[Proposition~1.5]{aeswy1} we have
$$
M^{\rho^*\phi}=\hat s(N_{\rho^*\phi}{W'})\w[Z_{\rho^*\phi}].  
$$
By \eqref{moral1}, noting that \eqref{trump} holds for Segre forms as
well in view of \eqref{smal}, 
 we have that 
\[
\hat c(\rho^*\L)\w M^{\rho^*\phi}
\sim 
\hat c(\rho^*\L)\w \hat s(\tilde\pi^*N_{\tau^*\fff}{W})\w \hat s(\rho^*\L)\w
[Z_{\rho^*\phi}]
=
\hat s(\tilde\pi^*N_{\tau^*\fff}{W})\w [Z_{\rho^*\phi}].
\]
By \eqref{ingen} and \eqref{sol} for Segre forms, thus 
$$
\tilde\pi_*\big( \hat c(\rho^*\L)\w M^{\rho^*\phi}\big)
\sim 
\hat s(N_{\tau^*\fff}{W})\w \tilde\pi_*[Z_{\rho^*\phi}].
$$
In view of \eqref{moral3} and \cite[Proposition~1.5]{aeswy1}, now \eqref{moral2} follows.

\smallskip 
 
We can now conclude  \eqref{apan2}.
Since $\alpha$ is smooth, from \eqref{moral2} we have, cf.~\eqref{ingen}, that
$$ 
M^{\tau^*\fff}\w\alpha\sim\tilde\pi_*\big( \hat c(\rho^*\L)\w M^{\rho^*\phi}\w \tilde\pi^*\alpha\big).
$$ 
and hence, by \eqref{ponny} and the commutivity of \eqref{studs3},
$$
M^\fff\w\mu=\tau_* \big(M^{\tau^*\fff}\w\alpha\big) \sim
\tau_*\tilde\pi_*\big( \hat c(\rho^*\L)\w M^{\rho^*\phi}\w\tilde\pi^*\alpha\big) = 
p_*\rho_*\big( \hat c(\rho^*\L)\w M^{\rho^*\phi}\w\tilde\pi^*\alpha\big).
$$ 
Now, by \eqref{sol} and \eqref{ponny},  
$$
\rho_*\big( \hat c(\rho^*\L)\w M^{\rho^*\phi}\w\tilde\pi^*\alpha\big)= \hat c(\L)\w M^{\phi}\w\rho_*\tilde\pi^*\alpha,
$$
so
$$
M^{\fff}\w\mu \sim p_*\big(\hat c(\L)\w M^{\phi}\w\rho_*\tilde\pi^*\alpha\big),
$$
and since $\rho_*\tilde\pi^*\alpha=p^*\mu$, cf.\ \eqref{grus}, therefore \eqref{apan2} follows.
\end{proof}

\section{Examples}\label{exsektion}

We shall now present some further results on our products and
various examples.
We first consider an embedding  $i\colon \Pr^M\to \P^{M+1}$ as a linear hyperplane defined by the
linear form $\xi$. Let $a\in\Pr^{M+1}$ be a point outside this hyperplane and let
 $\p\colon \Pr^{M+1}\dashrightarrow\Pr^M$ be the induced projection.  If $Y$ is the blowup
 of $ \P^{M+1}$ at $a$ we have the diagram
\begin{equation*}
\xymatrix{
Y \ar[d]_{\pi} \ar[dr]^p & \\
\mathbb{P}^{M+1} \ar@{-->}[r]_{\frak{p}} & \mathbb{P}^M.
}
\end{equation*}
As in Section~\ref{bulletsection} we see that given $\mu\in\GZ(\Pr^M)$ the current
$\p^*\mu$ has a well-defined extension to an element $\pi_*p^*\mu$ in
$\GZ(\Pr^{M+1})$, cf.\ Lemma ~\ref{lagom}. 

\begin{prop}\label{grisen} 
Let $\eta$ be a tuple of linear forms on $\Pr^M$. With the notation above we have
\begin{equation}\label{gris1}
i_*\big( M^{L,\eta}\w\mu\big)=M^{L, \p^*\eta}\w i_*\mu.
\end{equation}
and
\begin{equation}\label{gris2}
i_*\big(M^{L,\eta}\w\mu\big)=M^{L, (\p^*\eta,\xi)}\w \pi_*p^*\mu.
\end{equation}
\end{prop}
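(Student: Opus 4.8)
The first identity \eqref{gris1} is a direct instance of Proposition~\ref{pucko}: the projection $\p$ satisfies $\p\circ i=\mathrm{id}$, the forms $\p^*\eta$ restrict to $\eta$ along $i(\Pr^M)$ (that is, $i^*\p^*\eta=\eta$), and no auxiliary forms are needed, so $M^{L,\p^*\eta}\w i_*\mu=i_*(M^{L,\eta}\w\mu)$. The content is entirely in \eqref{gris2}, and my plan is to compute its left-hand side by lifting the operation to the blow-up $Y$ and integrating over the fibres of $p$. Using \eqref{ponny} (in a local frame for $L$, passing between $M^{L,\bullet}$ and $M^{\bullet}$ via \eqref{ecuador}) together with $\p\circ\pi=p$, which gives $\pi^*\p^*\eta=p^*\eta$, one obtains
\begin{equation*}
M^{L,(\p^*\eta,\xi)}\w\pi_*p^*\mu=\pi_*\big(M^{(p^*\eta,\pi^*\xi)}\w p^*\mu\big).
\end{equation*}

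Next I would invoke the local product structure of $Y$ over $\Pr^M$ from the proof of Lemma~\ref{lagom}: over a small $\U\subset\Pr^M$ one has $Y|_{\p^{-1}\U}\simeq\U\times\Pr^1_t$ with $p$ the projection onto $\U$, so that $p^*\mu=(\mu|_\U)\times[\Pr^1]$, the form $p^*\eta$ is pulled back from the base, and $\pi^*\xi$ vanishes precisely on the section $H_Y:=\{t=0\}$, i.e.\ $\pi^*\xi=t$ up to an invertible factor. Geometrically this records that the hyperplane $\{\xi=0\}$ meets the cone $\pi_*p^*\mu$ properly and cuts it down to the base, $[\div\xi]\w\pi_*p^*\mu=i_*\mu$, which is immediate from \eqref{hare} in these coordinates. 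The heart of the argument is then the local computation of $M^{(p^*\eta,\pi^*\xi)}\w\big((\mu|_\U)\times[\Pr^1]\big)$ on $\U\times\Pr^1_t$. Writing $u=|\eta|^2$ (on the base) and $v=|t|^2$ (on the fibre), so $|(p^*\eta,\pi^*\xi)|^2=u+v$, I would expand the regularization \eqref{epsilon2} and integrate over the one-dimensional fibre. Since the fibre has dimension one, every term carrying two or more fibre differentials is annihilated against $[\Pr^1]$, so only the single factor $dd^c v$ survives; the elementary integral $\int_0^\infty(\epsilon+u+s)^{-(k+1)}\,ds=(\epsilon+u)^{-k}/k$ then cancels the combinatorial factor $k$ from the binomial expansion of $(dd^c u+dd^c v)^k$ and leaves $\dfrac{\epsilon(dd^c u)^{k-1}}{(\epsilon+u)^{k}}\w(\mu|_\U)$, which tends to $M^{\eta}_{k-1}\w(\mu|_\U)$. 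This fibre residue both localizes the result to $H_Y$ and lowers the $\eta$-power by one, so that
\begin{equation*}
M^{(p^*\eta,\pi^*\xi)}_{k}\w\big((\mu|_\U)\times[\Pr^1]\big)=\jmath_*\big(M^{\eta}_{k-1}\w(\mu|_\U)\big),
\end{equation*}
where $\jmath\colon\U\hookrightarrow\U\times\{0\}$ is the inclusion as $H_Y$.

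Finally, $\pi$ restricts to an isomorphism $H_Y\simeq i(\Pr^M)$ compatible with $\jmath$ and $i$, so pushing forward by $\pi$ and patching over the sets $\U$ yields $\pi_*\big(M^{L,(p^*\eta,\pi^*\xi)}\w p^*\mu\big)=i_*(M^{L,\eta}\w\mu)$, which is \eqref{gris2}; comparison with \eqref{gris1} also gives $M^{L,(\p^*\eta,\xi)}\w\pi_*p^*\mu=M^{L,\p^*\eta}\w i_*\mu$ as a byproduct. The main obstacle is exactly the fibre computation above: making rigorous that only the single fibre-residue term contributes, with the correct index shift, while the pure-base terms (those with no fibre differential, which still carry the full class $[\Pr^1]$) drop out in the limit. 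I expect to dispatch the latter either through the mass estimate $\epsilon/(\epsilon+u)^{k}\to 0$ off $\{u=0\}$, or more structurally by noting that $M^{L,(p^*\eta,\pi^*\xi)}\w p^*\mu$ is supported on the zero set of $(p^*\eta,\pi^*\xi)$, which lies in $H_Y$, so that by the dimension principle no genuine cone component can survive. An alternative packaging of the same proof is to combine the clean divisor identity $[\div\xi]\w\pi_*p^*\mu=i_*\mu$ with a peeling identity $M^{L,(\zeta,\xi)}\w\nu=M^{L,\zeta}\w([\div\xi]\w\nu)$ and then apply \eqref{gris1}.
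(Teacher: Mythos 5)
Your treatment of \eqref{gris1} coincides with the paper's: citing Proposition~\ref{pucko} is legitimate, since its proof is exactly the paper's inline argument ($i^*\p^*\eta=\eta$ together with \eqref{ponny} and \eqref{ecuador}). For \eqref{gris2}, however, you take a genuinely different route, and it is sound. The paper imports from \cite[Example~7.8]{aeswy1} the identity $i_*(M^{\eta}\w\mu)=\hat c(\pi^*L)\w M^{(p^*\eta,\pi^*\xi)}\w p^*\mu$ (for smooth $\mu$ and Fubini--Study metrics, extended to all generalized cycles by a fibre-square argument), and then telescopes with van~Gastel's formula \eqref{gastel12}, using the vanishing $M_0^{(p^*\eta,\pi^*\xi)}\w p^*\mu=0$. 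You instead compute directly with the regularization \eqref{epsilon2} in the local product structure, where $(dd^c(u+v))^k=(dd^cu)^k+k\,(dd^cu)^{k-1}\w dd^cv$ and the fibre integral of the cross term is exactly $\epsilon(u+\epsilon)^{-k}(dd^cu)^{k-1}$, i.e.\ $M^{\eta}_{k-1,\epsilon}$. This yields the clean index shift $M_k^{L,(p^*\eta,\pi^*\xi)}\w p^*\mu=\jmath_*\big(M_{k-1}^{L,\eta}\w\mu\big)$ with no Chern-form correction; the factor $\hat c(\pi^*L)=1+\pi^*\hat\omega$ in the paper's intermediate formula is precisely what disappears when one passes from the Fubini--Study-normalized currents $M^\eta_j$ to the frame-normalized $M^{L,\eta}_j$ via \eqref{gastel12}, so the two intermediate statements are consistent. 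What your route buys: it is self-contained (no appeal to \cite{aeswy1} or to van~Gastel) and does not need $\eta$ to define a regular embedding; what the paper's buys is brevity, given the cited machinery.

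Two remarks for the write-up. First, the step you flag as the main obstacle has a resolution cleaner than either of your two suggestions, and it avoids proving that the pure-base term tends to $0$ as a current: by Theorem~\ref{putig} the limit is a generalized cycle with Zariski support in $\{p^*\eta=0\}\cap H_Y$, hence by \eqref{linser} it equals $\jmath_*\nu$ for a unique $\nu$; now apply the proper pushforward $p_*$ (which commutes with weak limits) to the regularized currents. The pure-base term contributes $0$ identically, for bidegree reasons: it carries no fibre differentials, so its wedge with the pullback of any test form from the base involves only base differentials and cannot reach top degree on $Y$. Hence $\nu=M_{k-1}^{L,\eta}\w\mu$, as desired (the portion of the fibre near the exceptional divisor contributes $O(\epsilon)$, since the tuple is bounded below there). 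Second, you should discard the ``alternative packaging'': the peeling identity $M^{L,(\zeta,\xi)}\w\nu=M^{L,\zeta}\w([\div\xi]\w\nu)$ is false for general $\nu$ and $(\zeta,\xi)$ --- Monge--Amp\`ere products do not factor through successive proper intersections, which is exactly why the St\"uckrad--Vogel procedure requires generic choices --- and in the present cone geometry proving it amounts to redoing your main computation, so it is not an independent shortcut.
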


\begin{proof}

Since the support of $i_*\mu$ is contained in the
hyperplane $i(\Pr^M)$ and $Y$ and $\Pr^{M+1}$ coincide in a
neighborhood of $i(\Pr^M)$, 
the right-hand side of \eqref{gris1} is well-defined. 
Now \eqref{gris1} follows from \eqref{ponny} and \eqref{ecuador} since
$\p\circ i=id$ so that $i^*\p^*\eta=\eta$.  


\smallskip
For the second equality first notice that both sides 
of \eqref{gris2} have support on $i(\P^M)$ and that $Y$ and $\P^{M+1}$ coincide in a neighborhood of 
$i(\P^M)$. For the rest of this proof let $i$ denote also the inclusion of $\P^M$ in $Y$.
Since $\eta$ defines a
regular embedding, it follows from \cite[Example~7.8]{aeswy1} that 
\begin{equation*}
i_*\big(M^{\eta}\w\mu\big)=\hat c(\pi^*L)\w M^{(p^*\eta,\pi^*\xi)}\w p^*\mu
\end{equation*}
if $\mu$ is a smooth form; here we use the standard metric on $L$.
It follows in general, by assuming that $\mu=\tau_*\alpha$, $\tau\colon W\to \Pr^M$,
and pulling back to $W$ and $W'$ according to the fibre square
\begin{equation*}
\begin{array}[c]{ccc}
W'  & \stackrel{\tau'}{\longrightarrow} & Y \\
\downarrow \scriptstyle{p'} & &  \downarrow \scriptstyle{p} \\
W & \stackrel{\tau}{\longrightarrow} &   \Pr^M,
\end{array}
\end{equation*}
cf.\ the proofs of Lemmas~\ref{lagom} and ~\ref{lupin} above. 
Since $\hat c(\pi^*L)=1+\pi^*\hat\omega$ we get
$$
i_*\big(M_j^{\eta}\w\mu\big)=M_{j+1}^{p^*\eta,\pi^*\xi}\w
p^*\mu+\pi^*\hat\omega\w M_{j}^{p^*\eta,\pi^*\xi}\w p^*\mu. 
$$
Thus, in view of \eqref{gastel12},
\begin{multline*}
i_*(M^{L,\eta}\wedge\mu) = \sum_{j\geq 0} \left(\frac{1}{1-\pi^*\hat\omega}\right)^j\wedge i_*(M_j^{\eta}\wedge\mu) \\
=
(1-\pi^*\hat\omega)\w\sum_{j\geq 1} \left(\frac{1}{1-\pi^*\hat\omega}\right)^j\wedge M_{j}^{p^*\eta,\pi^*\xi}\w p^*\mu 
+\pi^*\hat\omega \w\sum_{j\geq 0} \left(\frac{1}{1-\pi^*\hat\omega}\right)^j\wedge M_{j}^{p^*\eta,\pi^*\xi}\w p^*\mu \\
=
M^{\pi^*L,(p^*\eta,\pi^*\xi)}\wedge p^*\mu,
\end{multline*}
where we for the last equality have used that $M_{0}^{p^*\eta,\pi^*\xi}\w p^*\mu=0$ so that we may let the sum start from $j=0$;
indeed, $M_{0}^{p^*\eta,\pi^*\xi}\w p^*\mu=0$ since $\xi$ is generically non-vanishing on the Zariski support of $p^*\mu$.
Thus, \eqref{gris2} follows by applying $\pi_*$.
\end{proof}

We will now deduce a formula for $A\b\mu$ when $A$ is a linear subspace.

\begin{prop}\label{gulpenna} 
Assume that $A$ is a linear subspace of $\Pr^n$ of dimension $m$, defined by 
$n-m$ linear forms $\fff_1,\ldots,\fff_{n-m}$.  If $\mu\in\GZ_d(\Pr^n)$, then 
\begin{equation}\label{gris3}
(A\b\mu)_{d-k}=M_k^{L,\sigma}\w\mu
\end{equation}
in $\B(\Pr^n)$.
\end{prop}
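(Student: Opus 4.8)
The plan is to reduce the statement to a single identity in $\B(\P^{2n+1})$ and then prove that identity by combining the cone description of the join of a linear space with Proposition~\ref{grisen} and an explicit linear shear. Since $M^{L,\fff}_k\w\mu$ depends, as a $\B$-class, only on the ideal generated by $\fff$ and not on the chosen generators, and since the $\b$-product is equivariant under linear automorphisms by \eqref{debatt}, I may first assume $A=\{x_{m+1}=\cdots=x_n=0\}$ and $\fff_i=x_{m+i}$. Writing $d=\dim\mu$ and using \eqref{apa}, we have $\dim(A\times_J\mu)=m+d+1$, so by Definition~\ref{bulletdef} the class $j_*(A\b\mu)_{d-k}$ is represented by $M^{L,\eta}_{m+k+1}\w(A\times_J\mu)$, where $\eta=(y_i-x_i)_{i=0}^n$ generates $\J_J$ and $j$ is the join diagonal \eqref{jdia}. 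As \eqref{hopper} is injective, matching the two sides dimension by dimension reduces everything to
\begin{equation*}
M^{L,\eta}\w(A\times_J\mu)=j_*\big(M^{L,\fff}\w\mu\big)\quad\text{in }\B(\P^{2n+1}).
\end{equation*}

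Because $A$ is this coordinate plane, $A\times_J\mu$ is supported in $\P^{n+m+1}:=\{x_{m+1}=\cdots=x_n=0\}$, where, by Example~\ref{unge}, it is the classical cone $C$ over $\mu\subset\P^n_y$ with vertex the coordinate plane $\P^m_{x_0,\ldots,x_m}$; as a generalized cycle $C=\Pi_*Q^*\mu$, with $Q$ the extension of the projection from the vertex, i.e.\ exactly the object built by iterating the cone construction of Proposition~\ref{grisen}. Writing $\iota\colon\P^{n+m+1}\hookrightarrow\P^{2n+1}$ for the inclusion, so that $A\times_J\mu=\iota_*C$, the push-forward formula \eqref{ponny} applied to $\iota$ gives
\begin{equation*}
M^{L,\eta}\w(A\times_J\mu)=\iota_*\big(M^{L,\iota^*\eta}\w C\big),\qquad \iota^*\eta=(y_0-x_0,\ldots,y_m-x_m,\,y_{m+1},\ldots,y_n),
\end{equation*}
since the last $n-m$ entries of $\eta$ restrict to $y_{m+1},\ldots,y_n$ on $\P^{n+m+1}$. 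Everything is thus reduced to a computation in $\P^{n+m+1}$, free of the spurious coordinates $x_{m+1},\ldots,x_n$.

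Next I would bring in Proposition~\ref{grisen}. Coning off the vertex points $[x_0],\ldots,[x_m]$ one at a time, $m+1$ successive applications of \eqref{gris2} yield
\begin{equation*}
M^{L,\zeta}\w C=i_*\big(M^{L,\fff}\w\mu\big),\qquad \zeta=(x_0,\ldots,x_m,\,y_{m+1},\ldots,y_n),
\end{equation*}
where $i\colon\P^n_y\hookrightarrow\P^{n+m+1}$ is the \emph{base} embedding $\{x_0=\cdots=x_m=0\}$. The definition of $\b$, however, forces the answer onto the \emph{diagonal} copy, and $\zeta\neq\iota^*\eta$. I would reconcile the two with the linear shear $\Phi$ of $\P^{n+m+1}$ induced by $x_i\mapsto x_i+y_i$ $(i\le m)$, $y_j\mapsto y_j$. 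One checks directly that $\Phi$ commutes with the vertex projection $Q$, hence fixes the cone ($\Phi_*C=C$); that $\Phi\circ i$ is the diagonal embedding $\P^n_y\hookrightarrow\P^{n+m+1}$; and that $\Phi^*(\iota^*\eta)=\zeta$ up to signs. The transformation rule $M^{L,\iota^*\eta}\w\Phi_*\nu=\Phi_*\big(M^{L,\Phi^*\iota^*\eta}\w\nu\big)$—a case of \eqref{ponny} combined with \eqref{ecuador} and the norm-independence in Proposition~\ref{struts2}, exactly as in the proofs of Propositions~\ref{pucko} and \ref{pucko2}—then gives
\begin{equation*}
M^{L,\iota^*\eta}\w C=\Phi_*\big(M^{L,\zeta}\w C\big)=(\Phi\circ i)_*\big(M^{L,\fff}\w\mu\big).
\end{equation*}
Since $M^{L,\fff}\w\mu$ is supported on $A\cap|\mu|$, where $\iota\circ(\Phi\circ i)$ coincides with the join diagonal $j$, applying $\iota_*$ turns this into $j_*\big(M^{L,\fff}\w\mu\big)$, which is the required identity; injectivity of \eqref{hopper} and extraction of the component of dimension $d-k$ then give \eqref{gris3}.

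The main obstacle I anticipate is the bookkeeping that separates the two embeddings of $\P^n$ into the join space: Proposition~\ref{grisen} naturally delivers $M^{L,\fff}\w\mu$ on the base copy $i(\P^n_y)$, whereas the definition of the $\b$-product places it on the diagonal copy, and the defining tuples $\iota^*\eta$ and $\zeta$ generate genuinely different ideals. The shear $\Phi$ is the device that bridges this gap, and the crux of the argument is verifying its three compatibility properties together with $\Phi_*C=C$. A secondary, more routine, technical point is the identification of $A\times_J\mu$ with the iterated cone $\Pi_*Q^*\mu$ as a generalized cycle, which underlies the reduction to $\P^{n+m+1}$.
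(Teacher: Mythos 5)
Your proof is correct, and its skeleton coincides with the paper's: reduce by \eqref{debatt} and ideal-independence to the coordinate plane $A=\{x''=0\}$, identify the join $A\times_J\mu$ with the push-forward under $\iota\colon\P^{n+m+1}\hookrightarrow\P^{2n+1}$ of a cone $C$ over $\mu$ with linear vertex, transfer the $M^{L,\eta}$-computation to $\P^{n+m+1}$ by functoriality (\eqref{ponny}, \eqref{ecuador}), un-cone by $m+1$ applications of \eqref{gris2}, and finish with the support argument identifying the composed embedding with $j$ on $A\cap|\mu|$. The one place you genuinely diverge is the un-coning step. The paper feeds \eqref{gris2} the forms $\xi_j=x'_j-y'_j$, so its iteration lands directly on the embedding $j'\colon[x]\mapsto[x,x']$ together with exactly the restricted diagonal tuple appearing in \eqref{vitsippa}, and no correction is needed; you feed it the vertex coordinates $\xi_j=x_j$, land on the base embedding $i$ with the tuple $\zeta$, and then repair the mismatch with the shear $\Phi$. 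The repair is sound: $M^{L,\cdot}$ is defined frame-wise, hence metric-independently, so it is equivariant under arbitrary linear automorphisms (not only unitary ones as in Remark \ref{r2}), and your three compatibilities, $\Phi_*C=C$, $\Phi^*(\iota^*\eta)=\zeta$ up to signs, and $\Phi\circ i=j'$, all check out. What your variant buys is a symmetric picture in which the shear-invariance of the cone does the bookkeeping; what the paper's choice of $\xi_j$ buys is the disappearance of that entire step. One caution: the step you defer as routine, namely that $A\times_J\mu=\iota_*C$ with $C=\Pi_*Q^*\mu$ as \emph{generalized} cycles, is not covered by Example \ref{unge} (which treats cycles only); it is precisely what the paper establishes with the fibre-square argument \eqref{hurra}--\eqref{hurra3}, and a cousin of it (that iterated one-point cones agree with $C$) is implicitly used in every ``repeated application of \eqref{gris2}'', yours and the paper's alike. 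So your argument is complete once those Lemma \ref{lagom}-type verifications are written out.
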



\begin{proof}
Let us use the notation from Section \ref{bulletsection}. 
By \eqref{debatt} the $\bullet$-product is not affected by a 
linear change of coordinates on $\C^{n+1}_x$ and therefore we can
assume that 
$x=(x',x'')$ and $\fff=x''$. Then we need to prove that 
\begin{equation}\label{sjunga}
\mu\b[x''=0]=M^{L,x''}\w\mu
\end{equation}
in $\B(\P^n)$. Recall that $\eta=x-y$. By definition we have, cf.\ \eqref{ponny} and \eqref{ecuador},  
\begin{equation}\label{vitsippa}
j_*\big (\mu\b[x''=0]\big ) 
= 
%
M^{L,\eta}\wedge \pi_* p^* \big(\mu\times [y''=0]\big)
=
M^{L,(x'-y',x'')}\wedge \pi_* p^* \big(\mu\times [y''=0]\big). 
\end{equation} 

Recall the diagram \eqref{svangd} associated with the mapping \eqref{boljig} 
and, as in the proof of Proposition~\ref{baka2}, let $Y=Bl\,\P^{2n+1}_{x,y}$.
Consider the mapping $\p'\colon\P^{n+m+1}_{x,y'}\dashrightarrow \P^n_x\times\P^m_{y'}$, $[x,y]\mapsto ([x],[y'])$,
and let $\pi'\colon Y'\to \P^{n+m+1}_{x,y'}$ be the blow-up of
$\P^{n+m+1}_{x,y'}$ along $\{x=0\}$ and $\{y'=0\}$. 
Similarly to \eqref{svangd} we then have 
\begin{equation*}
\xymatrix{
Y' \ar[d]_{\pi'} \ar[dr]^{p'} & \\
\mathbb{P}^{n+m+1}_{x,y'} \ar@{-->}[r]_{\frak{p}'} & \mathbb{P}^n_x\times \mathbb{P}^m_{y'}.
}
\end{equation*}
Let $\iota: \P^{n+m+1}\hookrightarrow \P^{2n+1}, ~~ [x,y'] \mapsto
[x,y',0]$.  Then $\iota$ extends to a mapping $\tilde\iota\colon Y'\to Y$.
Also, let $\iota':\P^n\times \P^m \hookrightarrow \P^n \times \P^n,  ~~ ([x],[y']) \mapsto
([x],[y',0])$. 
Consider the fibre square 
\begin{equation*}
\begin{array}[c]{ccc}
Y'  & \stackrel{\tilde\iota}{\longrightarrow} & Y \\
\downarrow \scriptstyle{p'} & &  \downarrow \scriptstyle{p} \\
\P^n\times \P^m  & \stackrel{\iota'}{\longrightarrow} &   \P^n \times \P^n, 
\end{array}
\end{equation*}
cf.\ \eqref{studs3}. Notice that $\mu\times [y''=0]=\iota'_* (\mu\times 1)$.
By the same arguments as in the proof of Lemma~\ref{lagom}, we get 
\begin{equation}\label{hurra}
p^* \big (\mu\times [y''=0]\big) = p^* \iota'_* (\mu\times 1) = \tilde\iota_* (p')^*(\mu\times 1), 
\end{equation} 
cf.\ \eqref{grus}. It is straightforward to check that $\pi\circ\tilde\iota=\iota\circ\pi'$ and so,
by applying $\pi_*$ to \eqref{hurra}, we get
\begin{equation}\label{hurra2}
\pi_*p^* \big (\mu\times [y''=0]\big) = \pi_*\tilde\iota_* (p')^*(\mu\times 1)=
\iota_*\pi'_*(p')^*(\mu\times 1).
\end{equation} 
Let $p''\colon\P^n\times\P^m\to\P^n$ be projection on the first factor and set $p''':=p''\circ p'$.
Then $\mu\times 1=(p'')^*\mu$ and $(p')^*(\mu\times 1)=(p''')^*\mu$. Thus, by \eqref{hurra2},
\begin{equation}\label{hurra3}
\pi_*p^* \big (\mu\times [y''=0]\big) = \iota_*\pi'_*(p''')^*\mu.
\end{equation}
By \eqref{vitsippa}, \eqref{hurra3}, and repeated use of \eqref{gris1} we get
\begin{equation}\label{hurra4}
j_*\big(\mu\bullet [x''=0]\big) = M^{L,(x'-y',x'')}\wedge \iota_*\pi'_*(p''')^*\mu =
\iota_*\big(M^{L,(x'-y',x'')}\wedge\pi'_*(p''')^*\mu\big).
\end{equation}

Let $j':\P^n\to \P^{n+m+1}, ~~ [x]\mapsto [x,x']$, and let $\mathfrak{q}\colon \P^{n+m+1}\dashrightarrow \P^n$,
$[x,y']\mapsto [x]$. Then we have the commutative diagram
\begin{equation*}
\xymatrix{
Y' \ar[d]_{\pi'} \ar[dr]^{p'''} & \\
\mathbb{P}^{n+m+1}_{x,y'} \ar@{-->}[r]_{\frak{q}} & \mathbb{P}^n_x. 
}
\end{equation*}
By repeated use of \eqref{gris2}, with $\xi_j=x'_j-y'_j$, $j=0,\ldots,m$, we get
\begin{equation*}
M^{L,(x'-y',x'')}\wedge\pi'_*(p''')^*\mu = j'_*\big(M^{L,x''}\wedge\mu\big)
\end{equation*}
and so, by \eqref{hurra4},
\begin{equation*}
j_*\big(\mu\bullet [x''=0]\big) = \iota_*j'_*\big(M^{L,x''}\wedge\mu\big).
\end{equation*}
Since $j_*$ is injective, to finish the proof it suffices to check that we may replace $\iota_*j'_*$ by $j_*$ in the right-hand side.
Notice that $\nu:=M^{L,x''}\wedge\mu$ is a generalized cycle with support $\{x''=0\}$ so that
$\nu=i_*\nu'$ for some $\nu'\in\GZ(\{x''=0\})$, where $i\colon \{x''=0\}\hookrightarrow \P^n$ is the inclusion.
Since $\iota\circ j'\circ i=j\circ i$ we obtain
\begin{equation*}
\iota_*j'_* \nu=\iota_*j'_*i_*\nu'=j_* i_*\nu'=j_*\nu.
\end{equation*}
%

\end{proof}

 \begin{prop}\label{pyton}
Assume that $\mu\in\B(\Pr^n)$.  Then
\begin{equation}\label{kokong1}
\1_{\Pr^n}\b \mu=\mu.
\end{equation}
If $a$ is a point, then 
\begin{equation}\label{kokong2}
a\bullet\mu=\mult_a\mu\cdot [a].
\end{equation}
 \end{prop}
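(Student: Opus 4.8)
The plan is to obtain both identities as special cases of Proposition~\ref{gulpenna}, which computes $(A\b\mu)_{d-k}$ as $M_k^{L,\sigma}\w\mu$ for a linear subspace $A$ cut out by linear forms $\sigma$. First I would reduce to the case where $\mu$ has pure dimension $d$: both sides of \eqref{kokong1} and \eqref{kokong2} are additive over the decomposition of $\mu\in\B(\Pr^n)$ into pure-dimensional pieces, using multilinearity of the $\b$-product and additivity of $\mult_a$, so it suffices to treat a representative $\mu\in\GZ_d(\Pr^n)$.

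For \eqref{kokong1} I would take $A=\Pr^n$, the degenerate case $m=n$ of Proposition~\ref{gulpenna}, in which $A$ is defined by the empty tuple of forms, i.e.\ by the zero ideal with zero set $Z=\Pr^n$. Then in \eqref{helig} only the leading term $\1_Z\mu=\1_{\Pr^n}\mu=\mu$ survives, so $M_0^{L,\sigma}\w\mu=\mu$ and $M_k^{L,\sigma}\w\mu=0$ for $k\ge 1$; feeding this into \eqref{gris3} gives $(\Pr^n\b\mu)_d=\mu$ and all lower components zero, which is \eqref{kokong1}.

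For \eqref{kokong2} I would take $A=a$, the case $m=0$, so that $a$ is defined by $n$ linear forms $\sigma$ whose common zero is the single point $a$; since these cut out a point they generate the maximal ideal at $a$. The key input is then \eqref{skrutt}, which gives $M^{L,\sigma}\w\mu=\mult_a\mu\cdot[a]$. Since the right-hand side has pure dimension $0$ while $M_k^{L,\sigma}\w\mu$ has pure dimension $d-k$, comparing dimensions forces $M_k^{L,\sigma}\w\mu=0$ for $k\neq d$ and $M_d^{L,\sigma}\w\mu=\mult_a\mu\cdot[a]$; \eqref{gris3} then yields $(a\b\mu)_0=\mult_a\mu\cdot[a]$ with all other components zero, which is \eqref{kokong2}.

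The only delicate point I anticipate is the bookkeeping in \eqref{kokong1}: one must make sure that the $m=n$ boundary case of Proposition~\ref{gulpenna}, where the defining tuple $\sigma$ is empty and the blow-up construction in its proof degenerates, really does reduce $M^{L,\sigma}\w\mu$ to $\mu$ with no higher Monge-Amp\`ere terms. This is morally clear from \eqref{helig} and from the regularization \eqref{epsilon2} (with $|\sigma|_\s^2\equiv 0$), but it is the step where care is needed; the point case \eqref{kokong2}, by contrast, is an immediate application of \eqref{skrutt}.
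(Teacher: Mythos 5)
Your proof is correct and follows essentially the same route as the paper: both identities are read off from Proposition~\ref{gulpenna}, taking $A=\Pr^n$ (empty tuple of forms) for \eqref{kokong1} and $A=a$ together with \eqref{skrutt} for \eqref{kokong2}. The degenerate case you flag is handled in the paper simply by interpreting the defining section as the zero section, i.e.\ $\1_{\Pr^n}\b\mu=M^{L,0}\w\mu=\1_{\Pr^n}\mu=\mu$, which is exactly your computation via \eqref{helig}.
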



\begin{proof}
From Proposition \ref{gulpenna} we have that
$\1_{\Pr^n}\b \mu=M^{L,0}\w\mu=\mu$ and so  \eqref{kokong1} follows.
To see \eqref{kokong2} let 
 $\xi$ be linear forms that define $a$. By \eqref{gris3} and
 \eqref{skrutt} we have 
$a\bullet\mu=M^{L,\xi}\w\mu=  \mult_a\mu\cdot [a] $.
 \end{proof}





Let $\eta$ be a fixed choice of a tuple of linear forms defining the join diagonal $\Delta_J$ in $\P^{r(n+1)-1}$.
Then, using the notation of Section~\ref{bulletsection}, we can define a $\bullet$-product of $\mu_1,\ldots,\mu_r\in\GZ(\P^n)$ by
\begin{equation}\label{gullapp}
j_*(\mu_1\bullet\cdots\bullet\mu_r):=M^{L,\eta}\wedge \pi_*p^*(\mu_1\times\cdots\times\mu_r),
\end{equation}
cf.\ Definition~\ref{bulletdef}. With this definition, for $\mu\in\GZ(\P^n)$, \eqref{kokong1} and \eqref{kokong2} hold 
in $\GZ(\P^n)$.

\begin{prop}  \label{sumo}
Let $\eta$ be a fixed choice as above. Assume that $\mu_0,\mu_1,\ldots,\mu_r\in\GZ(\Pr^n)$ and that 
$\mu_0=\gamma\w\mu_1$
in an open set $\U\subset \P^n$, and $\gamma$ is a smooth and closed form.
Then
\begin{equation}\label{snabel}
\mu_0\bullet \mu_2\b\cdots\b\mu_r=\gamma\w (\mu_1\bullet\cdots\b\mu_r)
\end{equation}
 in $\U$.
\end{prop}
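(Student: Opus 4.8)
The plan is to push the smooth factor $\gamma$ through the entire join construction and then commute it past the Monge--Amp\`ere operator $M^{L,\eta}$. Since \eqref{snabel} is a local statement, I would work in a neighborhood of the join diagonal over $\U$ and use the $\GZ$-level definition \eqref{gullapp} of the $\b$-product together with the injectivity of $j_*$. Write $X=(\Pr^n)^r$, let $q\colon X\to\Pr^n$ be the projection onto the first factor (the slot occupied by $\mu_0$, resp.\ $\mu_1$), and set $\nu_0:=\mu_0\times\mu_2\times\cdots\times\mu_r$ and $\nu_1:=\mu_1\times\mu_2\times\cdots\times\mu_r$. Since $\mu_0=\gamma\w\mu_1$ in $\U$ and $\gamma$ is smooth, $q^*\gamma$ is a smooth closed form and $\nu_0=q^*\gamma\w\nu_1$ holds in $q^{-1}(\U)$. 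The key geometric observation is that the join diagonal $\Delta_J$ is disjoint from the indeterminacy planes $\{x^j=0\}$, so the map $\p$ from \eqref{hunger} is a genuine submersion in a neighborhood of $\Delta_J$; moreover $\p(j(\U))=i(\U)\subset q^{-1}(\U)$, where $i\colon\Pr^n\hookrightarrow X$ is the diagonal and $j$ is \eqref{jdia}. Hence $\tilde\gamma:=\p^*q^*\gamma$ is a well-defined smooth closed form in a neighborhood of $j(\U)$, and $\pi^*\tilde\gamma=p^*q^*\gamma$ there, using $p=\p\circ\pi$ and that $\pi$ is biholomorphic near $\Delta_J$ (cf.\ Remark~\ref{robinson}).

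With this in place the computation runs as follows. Applying the submersion $p$, which maps smooth forms to smooth forms (cf.\ \eqref{svangd} and Lemma~\ref{lagom}), and then $\pi_*$ with the projection formula \eqref{ingen}, I would obtain
\[
\pi_*p^*\nu_0=\pi_*\big(\pi^*\tilde\gamma\w p^*\nu_1\big)=\tilde\gamma\w\pi_*p^*\nu_1
\]
in a neighborhood of $j(\U)$. The left-hand side is the generalized cycle $\pi_*p^*\nu_0$, so $\tilde\gamma\w\pi_*p^*\nu_1$ is itself a generalized cycle, which is precisely the hypothesis needed to apply \eqref{lamm}. Therefore, using \eqref{gullapp}, then \eqref{ecuador} and \eqref{lamm} to commute the smooth form $\tilde\gamma$ past $M^{L,\eta}$,
\[
j_*(\mu_0\b\mu_2\b\cdots\b\mu_r)=M^{L,\eta}\w\big(\tilde\gamma\w\pi_*p^*\nu_1\big)=\tilde\gamma\w M^{L,\eta}\w\pi_*p^*\nu_1=\tilde\gamma\w j_*(\mu_1\b\cdots\b\mu_r).
\]

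Finally I would descend back to $\Pr^n$. Since $j_*(\mu_1\b\cdots\b\mu_r)$ is supported on $\Delta_J=j(\Pr^n)$, the projection formula \eqref{ingen} for $j$ gives $\tilde\gamma\w j_*(\mu_1\b\cdots\b\mu_r)=j_*\big(j^*\tilde\gamma\w(\mu_1\b\cdots\b\mu_r)\big)$, and $j^*\tilde\gamma=(\p\circ j)^*q^*\gamma=i^*q^*\gamma=\gamma$ because $q\circ i=\mathrm{id}$. Combining with the previous display yields $j_*(\mu_0\b\mu_2\b\cdots\b\mu_r)=j_*\big(\gamma\w(\mu_1\b\cdots\b\mu_r)\big)$, and injectivity of $j_*$ gives \eqref{snabel} in $\U$. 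I expect the main obstacle to be not any single computation but the bookkeeping of open sets: one must ensure that $\p$ is submersive and $\tilde\gamma$ is defined throughout a neighborhood of $j(\U)$, that the identity $\nu_0=q^*\gamma\w\nu_1$, valid only over $\U$, propagates consistently through $p^*$, $\pi_*$, and $M^{L,\eta}$, and that each intermediate current is a genuine generalized cycle so that \eqref{lamm} and the injectivity of $j_*$ legitimately apply.
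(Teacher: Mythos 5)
Your proof is correct and takes essentially the same route as the paper's: both factor out the smooth closed form $q^*\gamma=\gamma\times\1\times\cdots\times\1$ over $\U$, use that $\p$ is defined near $\Delta_J$ to identify $p^*q^*\gamma$ with $\pi^*\p^*q^*\gamma$ there, commute the smooth form past the Monge--Amp\`ere operator via \eqref{lamm}/\eqref{ecuador} and past push-forwards via the projection formula \eqref{ingen}, and finish with $j^*\p^*q^*\gamma=\gamma$ and injectivity of $j_*$. The only (immaterial) difference is the order of operations: you commute $\tilde\gamma$ past $\pi_*p^*$ first and then apply $M^{L,\eta}$ downstairs on $\P^{r(n+1)-1}$, whereas the paper applies $M^{L,\pi^*\eta}$ upstairs on the blow-up, commutes there, and then pushes down.
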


Combined with \eqref{kokong1} 
we see that 
\begin{equation}\label{bamse}
\gamma\b\mu=\gamma\w\mu
\end{equation}
in $\U$ if $\gamma\in\GZ(\Pr^n)$
is a smooth form there.

\begin{proof}
In view of \eqref{ponny} and \eqref{ecuador} we have
\begin{equation}\label{tuppa}
j_*(\mu_0\bullet \mu_2\b\cdots\b\mu_r) =\pi_* M^{L,\pi^*\eta} \wedge p^*
(\mu_0\times\mu_2\times\cdots\times \mu_r).
\end{equation} 
Now 
\[
\mu_0\times \mu_2\times\cdots\times\mu_r = 
(\gamma\times \1\times\cdots\times\1) \wedge (\mu_1\times\cdots\times\mu_r)
\] 
in $\U\times\P^n\times\cdots\times\P^n$. 
Since $\gamma\times\1\times\cdots\times\1$ is a smooth and closed form it follows from 
\eqref{lamm} that the right hand side of
\eqref{tuppa} equals 
\begin{equation}\label{angrybirds}
\pi_*\big(p^*(\gamma\times \1\times\cdots\times\1) \wedge 
M^{L,\pi^*\eta} \wedge p^*
(\mu_1\times\cdots\times \mu_r)\big) 
\end{equation}
in $\pi(p^{-1}(\U\times\P^n\times\cdots\times\P^n))$.
In a neighborhood of $\Delta_J=\{\eta=0\}$, $\mathfrak{p}$ is defined and so
\begin{equation*}
p^*(\gamma\times \1\times\cdots\times\1)=
\pi^*\mathfrak{p}^*(\gamma\times \1\times\cdots\times\1)
\end{equation*}
in a neighborhood of $\{\pi^*\eta=0\}$ in $p^{-1}(\U\times\P^n\times\cdots\times\P^n)$. 
Thus, \eqref{angrybirds} equals
\begin{equation}\label{jonnys}
\mathfrak{p}^*(\gamma\times \1\times\cdots\times\1)\wedge
M^{L,\eta} \wedge \pi_*p^*
(\mu_1\times\cdots\times \mu_r)
\end{equation}
on $j(\U)$.
Since $j^*\mathfrak{p}^*(\gamma \times \1\times\cdots\times\1)=\gamma$
in $\U$, by \eqref{gullapp}
we see that
\eqref{jonnys} equals 
\begin{equation*}
j_*(\gamma\wedge\mu_1\bullet\cdots\bullet\mu_r))
\end{equation*}
on $j(\U)$.
Using that $j_*$ is injective on currents we get \eqref{snabel}. 
\end{proof}

\begin{ex} Let $\hat\omega$ be the Fubini-Study metric form on $\P^n$. Then
$\hat\omega$ is a generalized cycle of degree $1$ and with multiplicity $0$ at each point.
Given any choice of $\eta$ as above,
it follows from Proposition~\ref{sumo} that $\hat\omega\b\hat\omega=\hat\omega\w\hat\omega$ and,
more generally,
$
\hat\omega\b \cdots \b\hat\omega=:\hat\omega^{k\b}=\hat\omega^k.
$
\end{ex}

\begin{ex}\label{theta}
Let $a=[1,0,\ldots,0]\in\Pr^n$ and let 
$\theta =dd^c\log(|x_1|^2+ \cdots + |x_n|^2)$ in $\P^n_{x_0,\ldots,
  x_n}$.  
For each $k$, 
$\theta^k$ is a well-defined positive closed current, see, e.g., 
\cite[Chapter~III]{Dem2}. 
It is an irreducible generalized cycle of dimension $n-k$ and degree $1$, with
$\mult_a\theta^k=1$ and $\mult_x\theta^k=0$ for $x\neq a$; for $k<n$,
$\theta^k$ has Zariski-support equal to $\Pr^n$ whereas
$\theta^n=[a]$, see \cite[Example~6.3]{aeswy1} and cf.~Example~\ref{predikat}.  
%
%
%
One can think of $\theta^{k}$ as
an $(n-k)$-plane through $a$ moving around $a$.  
We claim that 
\begin{equation}\label{sumo2}
\theta\b \cdots \b\theta=:\theta^{k\bullet}=\theta^{k} , \quad k\leq
n.  
\end{equation}
In fact, notice that both sides coincide outside $a$ in virtue of Proposition~\ref{sumo}.
Thus they can only differ on a generalized cycle with Zariski support at $a$, that is, 
$m[a]$ for some integer $m$.
Since the degree of $\theta$ is $1$, also the degree of $\theta^{k\bullet}$ must be $1$  by
the B{\'e}zout formula \eqref{likhet}; indeed note that $\rho$ in
\eqref{vill} in this case equals $n-k\geq 0$. Since the degree of the right hand side
is $1$ it follows that  $m=0$ and hence \eqref{sumo2} holds. 
\end{ex}

\begin{ex} Let $n=2$, let $a$ and $\theta$  be as in the previous example, and let $\ell$ be a line through $a$. Then
\begin{equation}\label{tapp}
\theta\bullet [\ell]=[a].
\end{equation}
In fact, in view of \eqref{bamse}, outside $a$, $\theta\bullet
[\ell]=\theta\w[\ell]$, which vanishes since the pullback of $\theta$ to $\ell$
vanishes.  By the same argument as in Example~\ref{theta},
using B{\'e}zout's formula \eqref{likhet}, we get \eqref{tapp}.   
\end{ex}


\begin{ex}\label{kaffekopp}
Let $\mu_1, \ldots, \mu_r$, $r\ge 2$,  be different lines through $a\in\Pk^n$. We claim that
$\mu_1\b\cdots\b \mu_r=[a]$.  In fact, since the set-theoretic intersection is $a$, the product must be
$m[a]$ for some integer $m$. Since the $\mu_j$ are effective it follows from \eqref{olikhet} that 
$m$ is $1$ or $0$.  By \eqref{stenstod} it is enough to
determine the local intersection number $\epsilon_0(\mu_1,\ldots,
\mu_r, a)$, and thus
we can assume that the $\mu_j$ are lines through $a=0$ in $\C^n$.  
In view of \eqref{lok} and 
\eqref{ecuador} 
this equals the multiplicity of $M^{L,\eta}_r\wedge
(\mu_1\times\cdots\times\mu_r)$, where $\eta$ is a tuple of linear
forms defining the diagonal
in $(\C^n)^r=\C^{n}\times\cdots\times \C^n$. 
This, in turn, can 
be computed by intersecting $\mu_1\times\cdots\times\mu_r$ by 
$r$ generic hyperplanes $\div (\alpha\cdot \eta)$, see Section
\ref{vogelsec}. 
Doing this, we get
$[0]$ with multiplicity $1$, which proves the
claim. 
%
\end{ex}

\begin{ex}\label{snurrsnitt}
Let $G$ be the graph in $\C^6_{x,y}=\C^6_{x_1,x_2,x_3,y_1,y_2,y_3}$ of the function 
\begin{equation*}
\C^3_x\to\C^3_y, \quad
(x_1,x_2,x_3)\mapsto (x_1x_3, x_2x_3, x_3^2),
\end{equation*} and let
$Z$ be the closure in $\Pr^6_{x_0,x,y}$.  Clearly $Z$ is irreducible of dimension $3$.
We want to  compute $A\b Z$, where $A=\{y=0\}$. 
By \eqref{gris3},
$$
(A\b Z)_{3-k}=M_k^{L,y}\w[Z].
$$
In view of Section \ref{vogelsec} we can compute the right hand side
by successively intersecting $[Z]$ by hyperplanes $\div h_j$, where
$h_1=\alpha\cdot y, h_2=\beta\cdot y$, and $h_3=\gamma\cdot y$
for generic $\alpha, \beta, \gamma\in \P^2$, and then taking
averages. 

The map $\P^3\dashrightarrow\P^6$, $[t_0,t_1,t_2,t_3]\mapsto [t_0^2,t_0t_1, t_0t_2, t_0t_3, t_3t_1, t_3t_2, t_3^2]$,
lifts to an injective holomorphic map from the blow-up $Y=Bl_{t_0=t_3=0}\P^3$ to $\P^6$ with image $Z$. Then $Z$
can be parametrized by two copies of $\P^2\times\C$,
\begin{equation*}
\P^2\times \C\ni([s,t_1,t_2],\sigma) \mapsto [s,t_1,t_2,s\sigma, \sigma t_1, \sigma t_2, \sigma^2s]\in Z
\end{equation*}
\begin{equation*}
\P^2\times \C\ni([u,t_1,t_2],v) \mapsto [uv^2,vt_1,vt_2,uv, t_1, t_2,u]\in Z,
\end{equation*}
identified by $s=uv$, $s\sigma=u$. Let $Z_1$ and $Z_2$ be the image of the first and second map, respectively.
Since $Z_2\cap A=\emptyset$, the SV-cycle we are to compute is contained in $Z_1$.

Expressed in the $([s, t_1, t_2], \sigma)$-coordinates, $A=\{\sigma t_1=\sigma t_2=\sigma^2s=0\}=\{\sigma=0\}$ and so, clearly,
$v_0^h\wedge[Z]=0$, cf.\ \eqref{badanka}. Moreover, $\div h_1$ is given by
\begin{equation*}
\sigma (\alpha_1t_1 + \alpha_2t_2 + \alpha_3s\sigma) =0.
\end{equation*}
Hence
$\div h_1$ has two irreducible components; the component $\sigma=0$ is contained in $A$ and thus contributes to $v_1^h\wedge [Z]$
whereas the component $\alpha_1t_1 + \alpha_2t_2 + \alpha_3s\sigma=0$ is not contained in $A$. Intersecting the latter component by
$\div h_2$ gives
\begin{equation*}
\alpha_1t_1 + \alpha_2t_2 + \alpha_3s\sigma = 
\sigma(\beta_1t_1 + \beta_2t_2 + \beta_3s\sigma) =0.
\end{equation*}
Again we get two irreducible components. The component $\{\sigma=\alpha_1t_1+\alpha_2t_2=0\}$ 
is contained in $A$ and contributes to 
$v_2^h\wedge [Z]$ while the component $\{\alpha_1t_1 + \alpha_2t_2 + \alpha_3s\sigma=\beta_1t_1 + \beta_2t_2 + \beta_3s\sigma=0\}$
is not contained in $A$. Intersecting the latter one by $\div h_3$ gives
\begin{equation*}
\alpha_1t_1 + \alpha_2t_2 + \alpha_3s\sigma=\beta_1t_1 + \beta_2t_2 + \beta_3s\sigma=
\sigma(\gamma_1t_1 + \gamma_2t_2 + \gamma_3s\sigma)=0.
\end{equation*}
The case $\sigma\neq 0$ forces $t_1=t_2=s=0$, which is impossible. The other case gives $2$ times the point $\{\sigma=t_1=t_2=0\}$
as contribution to $v_3^h\wedge [Z]$. 

We thus get the SV-cycle
\begin{equation}\label{hundra}
v^h\wedge [Z] = P + L_\alpha + 2a,
\end{equation}
where $P=\{x_3=y=0\}$, $L_\alpha=\{x_3=y=\alpha_1x_1+\alpha_2x_2=0\}$, and $a=[1,0,0,0,0,0,0]$ 
expressed in the original $x_0,x,y$-coordinates. 
Taking the average of \eqref{hundra} over $(\alpha, \beta, \gamma) \in (\P^2)^3$ we get 
\[
A\bullet Z = M^{L, y}\wedge [Z] = P +  \mu +2[a],
\]
where $\mu$ is the generalized cycle $[x_3=y=0]\w
dd^c\log(|x_1|^2+|x_2|^2)$ obtained as the average of $L_\alpha$. 

Note that the degree of $A\b Z$ is $4$ since each term has degree $1$
except for the double point $2[a]$. Thus, in view of \eqref{likhet}, $\deg Z=4$; indeed $\rho$ in \eqref{vill} is $0$ in
this case. 
Moreover, by \eqref{stenstod}, the local intersection numbers at $a$ are 
$\epsilon_0(A, Z, a)=\mult_a 2[a]=2$, $\epsilon_1(A, Z,a)=\mult_a\mu=1$, and $\epsilon_2(A, Z, a)=\mult_a P =1$. Here
we have used that $\mu$ has multiplicity $1$ at $a$ since it is a mean value of lines through $a$ in
the $4$-plane $\{x_3=y=0\}$, cf.~Example~\ref{theta}. 
\end{ex}


%
%

%

We now give an example that shows that the $\bullet$-product is not associative.

\begin{ex}\label{kokong} 
 Consider the hypersurface  $Z=\{x_2x_1^m-x_3^2x_0^{m-1}=0\}$ in $\P^3$,
let $H_2=\{x_2=0\}$ and $H_3=\{x_3=0\}$.     
Since $H_2$ and $Z$ intersect properly, 
$$
H_2\bullet Z=H_2\cdot_{\P^3} Z = 2\{x_2=x_3=0\}+(m-1)\{x_0=x_2=0\}
$$
cf.\ \eqref{studsmatta}. 
Let $A= \{x_2=x_3=0\} $. It follows from Proposition~\ref{pucko2} and
\eqref{kokong1} that
$\{x_3=0\}\bullet A=A;$ 
this can also be verified by a symmetry argument and the B{\'e}zout
formula \eqref{likhet}. 
Moreover, $\{x_3=0\}$ and $\{x_0=x_2=0\}$ intersect properly and the
intersection is $b=[0,1,0,0]$. Thus 
\begin{equation}\label{full1}
H_3\bullet(H_2\bullet Z)=
2 A+(m-1) [b].
\end{equation}
Next note that  $H_3\bullet H_2=A$.   It is showed in \cite[Example~11.5]{aswy}
that the 
local
intersection number for $A$ and $Z$ in dimension $0$ is $m$ at
$a=[1,0,0,0]$, 
and $1$ in dimension
$1$ at all points $x\in A$. 
It follows that $A$ and $m[a]$ are components of $A\bullet
Z$. Moreover, since $A$ and $Z$ are effective, by Theorem
\ref{mainthm}, $A\bullet Z$ is effective and of degree at most $\deg A
\cdot \deg Z = m+1$. Hence 
\begin{equation}\label{full2}
(H_3\bullet H_2)\bullet Z=A\bullet Z= A+m [a].
\end{equation}
%
\end{ex}

It follows that neither $\cdot_{\B(\Pr^N)}$  is associative in $\B(\Pr^n)$.  In fact, 
it follows from \eqref{full1}, \eqref{full2}, and Theorem~\ref{main2}, that 
$$
H_3\cdot_{\B(\Pr^n)}(H_2\cdot_ {\B(\Pr^n)} Z)=2\omega \w A+(m-1) [b],
$$
whereas
$$
(H_3\cdot_{\B(\Pr^n)} H_2)\cdot_{\B(\Pr^n)} Z= \omega\w A+m [a]
$$
and these right-hand sides are not equal in $\B(\P^n)$.


\begin{ex}\label{hatt} 
Let $\gamma$ be a smooth curve in $\Pr^2$ of degree $d$.  It is well-known, see, e.g., \cite{aswy}, that 
local intersection numbers are biholomorphic
invariants. Therefore, since the $\bullet$-self-intersection of a
line is the line itself, cf.\ the discussion after Proposition~\ref{kokosboll}, it follows from \eqref{stenstod} that at each $x\in \gamma$,
$\mult_x(\gamma\bullet\gamma)_1=1$ and
$\mult_x(\gamma\bullet\gamma)_0=0$.   
Thus, since $|\gamma\bullet\gamma|\subset \gamma$, in view of the
dimension principle, 
$\gamma\bullet\gamma=\gamma+\mu$
where $\mu$ has dimension $0$ and Zariski support equal to $\gamma$. 
%
By the B{\'e}zout formula \eqref{likhet} the degree of $\mu$ must be $d^2-d$.  
We can think of $\mu$ as $d^2-d$ points moving
around on $\gamma$.
\end{ex}

\begin{ex}\label{kraka} 
We want to compute the $\bullet$-self-intersection of a 
curve $Z$ in
$\P^2$. 
Assume that
$Z=\{F=0\}$ where $F$ is a section of $\Ok(d)$ with differential generically non-vanishing
on $Z$.  Let $\eta_j=y_j-x_j$, $j=0,1,2$,  on
$\P^5_{x,y}=\P^2_x\times_J\P^2_y$. 
Then $\eta$ defines the join diagonal $\Delta_J$. 
Following Section \ref{vogelsec} we can compute $M^{L, \eta}\wedge
(Z\times_J Z)$ by successively intersecting $Z\times_J Z$ by
hyperplanes $\div h_j$, where $h_j=\eta\cdot \alpha^j$ for generic
$\alpha^j\in \P^2$, and then
averaging over $\alpha=(\alpha^1,\alpha^2,\alpha^2)\in (\P^2)^3$. 
Note that we can write 
$$
F(y)-F(x)=\eta_0 A_0+\eta_1 A_1+\eta_2 A_2
$$
for suitable homogeneous forms $A_j$, 
and thus 
\[
Z\times_J Z=\{F(x)=0, F(y)=0\}= 
\{F(x)=0,\ \eta_0 A_0+\eta_1 A_1+\eta_2 A_2=0\}, 
\]
cf.\ Example \ref{unge}. 
It turns out that 
\[
[\div h_2]\wedge [\div h_1] \wedge (Z\times_J Z)=
\{
F(x)=0, 
\eta_2(\beta_0A_0+\beta_1 A_1+\beta_2 A_2)=0, 
\eta_1=\gamma_1\eta_2,  \eta_0=\gamma_0\eta_2\}
\]
for some $\beta, \gamma\in \P^2$. 
%
The second equation gives rise to two components. The component
corresponding to $\eta_2=0$ is contained in $\Delta_J$ and equals
\[
\{F(x)=0, \eta=0\}=\{F=0\} \cap \Delta_J = v_2^h\wedge (Z\times_J Z) =
j_* Z,
\]
where $j$ is the parametrization \eqref{jdia} of $\Delta_J$. 
Next, since 
$
A_j=F_j:=\partial F/\partial x_j
$
on $\Delta_J$ we get that 
\begin{multline}\label{flaggstang}
v_3^h\wedge (Z\times_J Z)= 
[\div h_3]\wedge \{
F(x)=0, ~ 
\sum_{j=0}^2\beta_jA_j=0, ~
\eta_1=\gamma_1\eta_2, ~ \eta_0=\gamma_0\eta_2  
\}
\\
=
\{F(x)=0,\   \sum_{j=0}^2\beta_jF_j=0,\ \eta=0\}. 
\end{multline} 
The curve defined by $\beta_0  F_0+\beta_1 F_1+\beta_2 F_2$ is a so-called {\it polar curve}
to $Z$; it is clear that it passes through all singular points
$a_1\ldots, a_r$  of $Z$,
since the gradient must vanish there. More precisely, in view of the
B{\'e}zout formula \eqref{likhet}, for 
generic $\beta$, 
\[
v_3^h\wedge (Z\times_J Z) = m_1[a_1]+\cdots+ m_r[a_r]+r_\beta, 
\]
where $m_j$ are the multiplicities of $a_j$ and $r_\beta$ are
$d^2-d-(m_1+\cdots + m_r)$ points on $Z$ depending on $\beta$, cf.\
Example \ref{hatt}. 
Thus, taking averages over $\alpha\in(\P^2)^3$, we get that 
\begin{equation}\label{hissa}
Z\bullet Z = Z + m_1[a_1]+\cdots m_r[a_r] + \mu,
\end{equation} 
where $j_*\mu$ is the average of the $r_\beta$. In particular, $\mu$ has
dimension $0$, Zariski-support equal to $Z$, and degree $d^2-d-(m_1+\cdots + m_r)$. 
Moreover, in view of Example~\ref{hatt}, $\mu$ has multiplicity $0$ at each
point.



\end{ex}

Let us now consider a simple cusp.

\begin{ex}
\label{cusp}
Let us consider the situation of the previous example 
and let $F=x_1^3-x_0x_2^2$ so that $Z\subset \P^2$ is a cusp 
with a singularity only at the point $a=[1,0,0]$.
Now 
\[
v_3^h\wedge (Z\times_J Z)= 
\{x_1^3-x_0x_2^2=0,\quad \beta_0 x_2^2+\beta_1 x_1^2+\beta_2 x_0x_2=0,
\eta=0\}
\]
for some $\beta\in \P^2$, 
see \eqref{flaggstang}.  
For generic choices of $\alpha\in (\P^2)^3$, $\beta_2\neq 0$ and we
can identify this with the set of points 
\[
\varrho_\beta = \{x_1^3-x_0x_2^2=0,\ \beta_0 x_2^2+\beta_1 x_1^2+x_0x_2=0\}\subset \P^2. 
\]
To compute the order of the zero at $a$, we can use affine coordinates
and thus let $x_0=1$. 
Then 
$
\varrho_\beta=\{x_1^3-x_2^2=0,\  \beta_0 x_2^2+\beta_1 x_1^2 + x_2=0\}. 
$
If we choose new coordinates $z_1=x_1,\ z_2=x_2+\beta_0x_2^2+\beta_1x_1^2$,
then $x_2=z_2+\Ok(z^2)$, and thus $\varrho_\beta$ is defined by the equations
$$
z_1^3-\big (z_2+\Ok(z^2)\big )^2=0,\quad z_2=0. 
$$
Hence the zero at $a=(0,0)$ has order $3$. In fact, for a complete intersection, as here, 
the order of the zero coincides with the degree of the associated mapping. 
From \eqref{hissa} we conclude that 
\begin{equation}\label{anka1}
Z\bullet Z=Z+3 [a]+\mu,
\end{equation}
where $\mu$ has dimension $0$, Zariski-support equal to $Z$,
multiplicity $0$ at each point, and degree $3$.
\end{ex}

\begin{ex}\label{cusp2}
Let $Z\subset \P^2$ be the cusp as in the previous example.
In view of Theorem~\ref{main2} and \eqref{anka1} we get
\begin{equation}\label{anka2}
Z\cdot_{\B(\Pr^2)}Z=\omega\w[Z]+3[a]+\mu.
\end{equation}
Since $Z$ is a regular embedding in $\Pr^2$ we can also form the
product $Z\diamond_{\B(\P^2)} Z$.
Let $\J\to \Pr^2$ be the sheaf defining $Z$. If $i\colon Z\hookrightarrow \Pr^2$, then  $i^*\J_Z=0$ so that
 $S(\J_Z,Z)=S(0,Z)=[Z]$, cf.\ Section \ref{byggnad}. Moreover,
 $N_Z\P^2=\Ok(3)|_Z$, so that $c_1(N_Z\P^2)=3\omega$.  Thus 
\begin{equation}\label{anka3}
Z\diamond_{\B(\Pr^2)} Z=\big(c(N_Z{\Pr^2})\w
S(\J_Z,Z)\big)_0=3\omega\w[Z], 
\end{equation}
cf.\ Definition \ref{strunta}. 
Notice that \eqref{anka2} and \eqref{anka3} do not coincide in $\B(\Pr^2)$. 
For instance,  the first one has multiplicity
$3$ at $a$, whereas the second one has multiplicity $0$ at $a$.

However, in view of Proposition~\ref{stupa} their images in $\widehat H^{2,2}(Z)$ coincide. Clearly the image of
$Z\diamond_{\B(\Pr^2)} Z$ is represented by the restriction to $Z$ of the form $3\omega$.  It is easy to see
that $3a$ is cohomologous with $\omega$ on $Z$ as
$$
3[a]-\omega\w[Z]=\mult_a Z\cdot [a] -\omega\w[Z]=dd^c(\log(|z_1|^2/|z|^2)[Z]).
$$
It is somewhat less obvious that $\mu$ is cohomologous with $\omega$ on $Z$. 
\end{ex}

\noindent Example \ref{cusp2} also shows that the self-intersection
formula, Proposition \ref{sparrow}, does not generalize to non-smooth $Z$.





\end{document}